\begin{document}
\newtheorem{theoreme}{Theorem}
\newtheorem{lemma}{Lemma}[section]
\newtheorem{proposition}[lemma]{Proposition}
\newtheorem{corollary}[lemma]{Corollary}
\newtheorem{definition}[lemma]{Definition}
\newtheorem{conjecture}[lemma]{Conjecture}
\newtheorem{remark}[lemma]{Remark}
\newtheorem{exe}{Exercise}
\newtheorem{theorem}[lemma]{Theorem}
\theoremstyle{definition}
\numberwithin{equation}{section}
\newcommand{\R}{\mathbb R}
\newcommand{\TT}{\mathbb T}
\newcommand{\Z}{\mathbb Z}
\newcommand{\N}{\mathbb N}
\newcommand{\Q}{\mathbb Q}
\newcommand{\Var}{\operatorname{Var}}
\newcommand{\tr}{\operatorname{tr}}
\newcommand{\supp}{\operatorname{Supp}}
\newcommand{\intinf}{\int_{-\infty}^\infty}
\newcommand{\me}{\mathrm{e}}
\newcommand{\mi}{\mathrm{i}}
\newcommand{\dif}{\mathrm{d}}
\newcommand{\beq}{\begin{equation}}
\newcommand{\eeq}{\end{equation}}
\newcommand{\ben}{\begin{eqnarray}}
\newcommand{\een}{\end{eqnarray}}
\newcommand{\beno}{\begin{eqnarray*}}
\newcommand{\eeno}{\end{eqnarray*}}

\def\d{\delta}
\def\a{\alpha}
\def\e{\varepsilon}
\def\ld{\lambda}
\def\p{\partial}
\def\v{\varphi}
\newcommand{\D}{\Delta}
\newcommand{\Ld}{\Lambda}
\newcommand{\n}{\nabla}
\newcommand{\GG}{\text{g}}

\title[Local smoothing of FIO]
{Improved variable coefficient
	square functions and
	local smoothing of Fourier integral operators }

\author[Gao]{Chuanwei Gao}
\address{\hskip-1.15em Chuanwei Gao:
	\hfill\newline The Graduate School of China Academy of Engineering Physics,
	\hfill\newline P. O. Box 8009,\ Beijing,\ China,\ 100088,}
\email{canvee@163.com}

\author[Miao]{Changxing Miao}
\address{\hskip-1.15em Changxing Miao:
	\hfill\newline Institute of Applied Physics and Computational
	Mathematics,
	\hfill\newline P. O. Box 8009,\ Beijing,\ China,\ 100088,}
\email{miao\_changxing@iapcm.ac.cn}

\author[Yang]{Jianwei-Urbain Yang}
\address{\hskip-1.15em Jianwei-Urbain Yang£º
	\hfill\newline Department of Mathematics,
	\hfill\newline Beijing Institute of Technology,
	\hfill\newline Beijing 100081,\ P. R.  China}
\email{jw-urbain.yang@bit.edu.cn}

\subjclass[2010]{Primary:35S30; Secondary: 35L15}

\keywords{Fourier integral operator; Carleson-Sj\"olin condition; Local smoothing; Oscillatory integral}
\begin{abstract}
	We establish certain square function
	estimates for a class of
	oscillatory integral operators with homogeneous phase functions.
	These results are employed to deduce
	a refinement of
	a previous result of Mockenhaupt Seeger and Sogge \cite{MSS-jams}
	on the local smoothing property for Fourier integral operators,
	which arise naturally in the study
	 of wave equations on
	compact Riemannian manifolds.
	The proof is an adaptation of the bilinear approach of Tao and Vargas \cite{Tao-Vargas-II},
and  based on bilinear oscillatory integral estimates of Lee \cite{Lee-JFA}.
\end{abstract}
\maketitle

\section{Introduction}
\label{sect:introd}
\subsection{Motivation and background}
The purpose of this paper is to study the local smoothing property for
a certain class of Fourier integral operators
acting on locally $L^p-$integrable functions defined on paracompact manifolds, which in the terminology of \cite{MSS-jams} satisfy the \emph{cinematic curvature} conditions.
We obtain improvements upon the known
$L^p\to L^p$ regularity results  for these operators for relatively small $p$, which were established previously in \cite{MSS-jams}.

Let $Z$ and $Y$ be smooth  paracompact manifolds with ${\rm dim} \;Z=n+1$
and ${\rm dim}\; Y=n\geqslant 2$, respectively. A Fourier integral operators  $\mathscr{F}\in I^{\sigma-\frac{1}{4}}(Z,Y;\mathscr{C})$ is said to
satisfy the cinematic curvature
condition  in the terminology of \cite{MSS-jams} as follows.
First of all, a Fourier integral operator $\mathscr F$ is determined globally by the canonical relation  $\mathscr{C}$, which  is a $2n+1$ dimensional closed homogeneous, conic
Lagrangian submanifold of $T^*Z\setminus 0\times T^*Y\setminus 0$
with respect to the symplectic form
$d\zeta\wedge dz-d\eta\wedge dy$.
Next, for a given $z_0\in Z$,
we consider the following diagram,
\begin{equation}
\label{jsdkmkcsd}
\xymatrix{
	&\mathscr{C}\ar[d]_{\varPi_{Z}} \ar[dl]_{\varPi_{T^{*}Y}}\ar[dr]^{\varPi_{T^{*}_{z_0}Z}}	
	&\\ T^{*}Y\setminus 0
	&Z
	& T^{*}_{z_0}Z\setminus0}
\end{equation}
where $\varPi_{T^{*}Y}$, $\varPi_{T^{*}_{z_0}Z}$
and $\varPi_{Z}$ are projections from $\mathscr{C}$ to $T^{*}Y\setminus 0$, $T^*_{z_0}Z\setminus 0$ and $Z$, respectively.
The first part of the cinematic curvature condition is an assumption on the nondegeneracy of
the first two projections in  \eqref{jsdkmkcsd},  requiring that
both of them are submersions,
\begin{equation}
\label{dkmkldsc}
{\rm rank}\; d\varPi_{T^{*}Y}\equiv 2n,
\end{equation}
\begin{equation}
\label{vmklsdmks}
{\rm rank}\; d \varPi_{Z}\equiv n+1.
\end{equation}
The second part of this condition concerns the curvature properties of
the image of $\varPi_{T^{*}_{z_0}Z}$ from $\mathscr C$, denoted by $\Gamma_{z_0}=\varPi_{T^{*}_{z_0}Z}(\mathscr C)$, as
an immersed hypersurfaces in the cotangent space $T^*_{z_0}Z\setminus 0$.
Then,
as a consequence of \eqref{dkmkldsc}\eqref{vmklsdmks} and the  homogeneity of $\mathscr C$,
$\Gamma_{z_0}$ is a smooth conic $n-$dimensional hypersurface in $T^*_{z_0}Z\setminus0$.
We shall impose the \emph{cone condition} on $\mathscr C$ by requiring that for every $\zeta\in \Gamma_{z_0}$,
there are $n-1$ principal curvatures which do not vanish.
We say thus  $\mathscr C$
satisfies the cinematic curvature condition,
if it satisfies \eqref{dkmkldsc}\eqref{vmklsdmks} and the cone condition.

One of the motivation on
the study of this kind of operators
may date back to the work of Stein \cite{Stein76}, investigating  the $L^p\to L^p$ boundness of maximal average
operators on the sphere, which is in  close connection to the pointwise convergence properties of wave equations and some of other topics in harmonic analysis on symmetric spaces.
When $n\geqslant 3$, Stein proved the optimal $L^p\to L^p$ estimate in \cite{Stein76} for $\frac{n}{n-1}<p\leqslant\infty$, while the two dimensional case was left open due to  impossibility of using the $L^2-$theory of Fourier transform. This problem in the two dimensional case was settled a couple of years later by Bourgain
\cite{Bo86} and generalized  to the variable coefficient setting by Sogge \cite{Sogge91}, where a primary version of local smoothing property for wave equations was employed in $(1+2)$ dimension to prove the $L^p\to L^p$ boundness of circular maximal averaging operators. A much more profound connection
between the circular maximal functions and the local smoothing properties of wave equations was found and clarified
later in \cite{MSS-Ann},
where the authors provided a much simplified proof of
Bourgain's circular maximal theorem based on a more sophisticated  local smoothing property of 2D wave equations in Euclidean space, a greatly improved result in the constant variable setting compared to \cite{Sogge91}.
These results were  generalized to an abstract theory concerning Fourier integral operators fulfilling the cinematic curvature condition later by Mockenhaupt, Seeger and Sogge \cite{MSS-jams} for all dimensions $n\geqslant 2$.

It is conjectured in \cite{Sogge91} for the wave equation on Euclidean space, and in \cite{MSS-jams} for Fourier integral operators satisfying the cinematic curvature condition that
for $p\geqslant 2n/(n-1)$, there should
always be an order
$1/p$ local smoothing property for these operators, \emph{i.e.},  whenever $\sigma<-\frac{n-1}{2}+\frac{n}{p}$, the operators $\mathscr{F}\in I^{\sigma-\frac{1}{4}}(Z,Y;\mathscr{C})$ are bounded from $L^p_{\rm comp}(Y)$ to $L^p_{{\rm loc}}(Z)$.

This is referred  as the \emph{local smoothing conjecture} in the literature
of modern Fourier analysis and has attracted extensive works of study. If this conjecture would have been proved,
then it would imply positive answers to a number of open conjectures concerning fundamental
problems of harmonic analysis and geometric measure theory, including
(maximal) Bochner-Riesz means, Fourier restriction theorem, Kakeya and Nikodym maximal function estimates as well as Hausdorff dimensions of Besicovitch sets in all dimensions. See the works \cite{Bo91gafa,Bo91St,Tao99Duke,Wolff99,KaTa02}
and references therein.

Proving the sharp $L^p-L^p$ local smoothing estimates appears to be very difficult, even for the wave equation posed on Euclidean spaces. Using microlocal analysis and
$L^2\to L^p$ local smoothing, or rather Strichartz's estimates in modern terminology, which turns out to be much easier to prove,  Mockenhaupt, Seeger and Sogge \cite{MSS-jams} demonstrated certain $L^{q_n}-$square function inequality
with $q_n:=2\frac{n+1}{n-1}$
for $n\geqslant 3$ (which coincides with the exponent of symmetric Strichartz's estimate for wave equations) and  established an $L^4-$square function inequality in dimension two by
exploring orthogonality in circular directions via  bilinear $L^2$ geometric approach. Combined with a variable coefficient version
of C\'ordoba's Kakeya maximal operator
along the direction of light rays, these square function estimates yielded
certain \emph{non-sharp}, $L^p\to L^p$ local smoothing estimates for  Fourier integral operators of this kind.

Concerning the question of the constant coefficient  wave equations, these local smoothing properties are  known to be deduced from the cone multiplier estimates and there are  subsequent improved results  by many authors mainly on the $(1+2)-$dimensions: see Bourgain \cite{Bourgain95}, the first improvement for the cone multiplier; Tao and Vargas \cite{Tao-Vargas-II}, Garrig\'os and Seeger \cite{GaSe09} using the bilinear method based on bilinear restriction estimates \cite{Wolff};  More recently, Lee  \cite{Lee18P} further improved the $L^4$ local smoothing estimate using  $\ell^2-$decoupling inequality of Bourgain and Demeter \cite{BoDe2015}.
All of these works are away from optimal with respect to the regularity and concern $L^p\to L^p$ local smoothing estimates for $p<q_n$.

The first sharp $L^p\to L^p$-local smoothing estimate was obtained by Wolff \cite{Wolff00} in $(1+2)-$dimensions for  $p>74$ and was extended to the higher dimensional cases by \L aba and Wolff \cite{LaWo02}. The borderline for this range of $p$  was refreshed later by Garrig\'os, Schlag and Seeger \cite{GaSchSe} and ultimately improved down to the Strichartz exponent $p\geqslant q_n$  by Bourgain and Demeter \cite{BoDe2015} via their celebrated $\ell^2-$decoupling theorem. Lee and Vargas \cite{LeVa12}, using the Bourgain and Guth multilinear approach in \cite{BoGu11} on the basis of the multilinear restriction theorem of Bennett, Carbery and Tao \cite{BCT06} obtained the sharp  local smoothing estimates for $p=3$.

Notice that none of the above works deals with possible improvements for
abstract theory of \cite{MSS-jams} at the generality of Fourier integral operators within the framework
set up at the very beginning of this section.
For $p\geqslant q_n$, Beltran, Hickman and Sogge \cite{BelHicSog18P} established the sharp $L^p\to L^p$ local smoothing estimates for Fourier integral operators of this kind by extending
Bourgain-Demeter's decompling inequality
to the variable coefficient setting.
In addition, they also construct
examples to show the optimality of their results in \emph{odd} dimensions at the level of such generality.
In particular, one can not expect an order $1/p$ local smoothing estimates for all $p$
between $\frac{2n}{n-1}$ and $q_n$.
It is conjectured in \cite{BelHicSog18P,BelHicSog18P-surv} that if $n\geqslant 2$ is even, there should be optimal local smoothing estimates for Fourier integral operators satisfying the cinematic curvature conditions whenever $p\geqslant \frac{2(n+2)}{n}$.
Furthermore, it is also conjectured in \cite{BelHicSog18P} that
if one imposes a {convexity} condition on the cone $\Gamma_{z_0}$ with a requirement that $\Gamma_{z_0}$ always has $n-1$ \emph{positive} principal curvatures,
then the optimal local smoothing property would be able to hold for these operators whenever $p\geqslant p_{n,+}$, with
\[
p_{n,+}:=
\begin{cases}
\frac{2(3n+1)}{3n-3},\quad \text{if}\; n\; \text{is odd},\\
\frac{2(3n+2)}{3n-2}, \quad \text{if}\; n\;\text{is even}.
\end{cases}
\]
When dimension $n=2$, by interpolation with the trivial $L^2-$endpoint inequality, Beltran-Hickman-Sogge's sharp  $L^6-$estimate in  \cite{BelHicSog18P}
only recovers the $1/8-$result for the $L^4\to L^4$ local smoothing estimate of Mockenhaupt, Seeger and Sogge \cite{MSS-jams}.

To our knowledge,  this is the best results so far at such a level of generality for the abstract theory of Fourier integral operators. In particular, when $n\geqslant 2$ and $p<q_n$,
the above two conjectures formulated in \cite{BelHicSog18P,BelHicSog18P-surv} are completely open and this paper is intended to provide some partial answers to them.
Now, we state
our main result.
\begin{theoreme}
	\label{main}
	Let $Z$ and $Y$ be  smooth paracompact manifolds of dimension
	$3$ and $2$ respectively.
	Suppose that $\mathscr F\in I^{\sigma-\frac{1}{4}}(Z,Y;\mathscr C)$,
	is a Fourier integral operator of order $\sigma$, whose canonical relation $\mathscr C$ satisfies the cinematic curvature condition.
    Then the following estimate holds
    	\begin{equation}
    \label{eq:main}
    \bigl\|\mathscr{ F }f\bigr\|_{L^p_{\rm loc}(Z)}\leqslant C\|f\|_{L^p_{\rm comp}(Y)}, \quad
    \text{for all}\quad\sigma<-\bar{s}_p+\varepsilon(p),
    \end{equation}
    where $2\leqslant p\leqslant 6$, $\bar{s}_p=\frac{1}{2}-\frac{1}{p}$
    and
    \begin{equation}
    \label{eq:sigma-n}
        \varepsilon(p)=
    \begin{cases}
    \frac{3}{16}-\frac{1}{8p},\;&\text{if}\quad \frac{10}{3}\leqslant p\leqslant 6,\\
    \frac{3}{8}-\frac{3}{4p},\;& \text{if}\quad
    2\leqslant p\leqslant \frac{10}{3}
    \end{cases}
    \end{equation}
\end{theoreme}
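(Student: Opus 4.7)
The plan is to pass from the abstract FIO setting to a concrete oscillatory integral problem, prove a variable-coefficient square function inequality by adapting the Tao--Vargas bilinear-to-linear scheme with Lee's bilinear estimate as the core input, and then interpolate. First I would perform the standard microlocal reductions: localize $\mathscr F$ to a small coordinate patch where the canonical relation $\mathscr C$ is locally parameterized by a generating function, and use a Littlewood--Paley decomposition in the output frequency variable to reduce \eqref{eq:main} to a family of fixed-frequency oscillatory integral operators $T^\lambda$ acting at scale $\lambda \gg 1$. Combined with the nondegeneracy assumptions \eqref{dkmkldsc}--\eqref{vmklsdmks} and the cone condition, this yields an operator of Carleson--Sj\"olin type whose phase is homogeneous of degree $1$ in the frequency variable and whose associated cone $\Gamma_{z_0}$ has $n-1 = 1$ nonvanishing principal curvature; the theorem then reduces to showing $\|T^\lambda f\|_{L^p} \lesssim \lambda^{\bar s_p - \varepsilon(p) + \delta}\|f\|_{L^p}$ for every $\delta>0$.

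Next I would establish the main \emph{variable coefficient square function estimate}. Decompose the frequency support into angular caps $\theta$ of aperture $\lambda^{-1/2}$ around the cone, giving the splitting $T^\lambda f = \sum_\theta T^\lambda f_\theta$ into pieces whose space-time supports are essentially curved plates (``light cone plates'') of dimensions $1 \times \lambda^{-1/2} \times \lambda^{-1}$. The goal is an inequality of the form
\begin{equation*}
\|T^\lambda f\|_{L^p(\mathbb R^3)} \lesssim \lambda^{\varepsilon(p)+\delta} \Bigl\| \Bigl(\sum_\theta |T^\lambda f_\theta|^2\Bigr)^{1/2}\Bigr\|_{L^p(\mathbb R^3)}
\end{equation*}
for $p$ in the relevant range. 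Following Tao--Vargas, I would linearize via a Whitney decomposition on pairs of caps $(\theta,\theta')$ according to their angular separation $\sim 2^{-j}$, and bound each bilinear block $\sum_{d(\theta,\theta')\sim 2^{-j}} T^\lambda f_\theta \cdot T^\lambda f_{\theta'}$ in $L^{p/2}$ using the bilinear oscillatory integral estimate of Lee \cite{Lee-JFA}; this is what provides the transversality-based gain over the linear $L^p$ bound. Summing the geometric series in $j$ and applying Cauchy--Schwarz converts the bilinear bound into a linear square function bound, with the loss $\lambda^{\varepsilon(p)}$ dictated by exactly how Lee's bilinear exponents interpolate against the trivial energy estimate.

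With the square function inequality in hand, I would control the right-hand side by combining $L^p$-orthogonality of the wave packets associated to different plates $\theta$ with the variable-coefficient version of C\'ordoba's Kakeya--Nikodym maximal function along light rays already used in \cite{MSS-jams}; this converts the square function bound into a genuine $L^p \to L^p$ estimate for $T^\lambda$ with the stated exponent $\varepsilon(p)$ for $p \in [10/3, 6]$. The full range $2 \leqslant p \leqslant 6$ then comes by interpolation with the trivial $L^2$ bound (which accounts for the steeper slope of $\varepsilon(p)$ on $[2,10/3]$) and with the endpoint at $p=6$. The main obstacle I expect to wrestle with is the variable-coefficient bilinear step: Lee's estimate is originally proved for translation-invariant extension operators, and implementing it at each scale $2^{-j}$ requires rescaling the pair of angular caps to unit scale while preserving the cinematic curvature hypothesis and the transversality quantitatively -- in particular, tracking how lower order terms in the parametrix of $\mathscr F$ behave under the parabolic rescaling and showing that the rescaled phases still satisfy the hypotheses of Lee's theorem uniformly in $j$.
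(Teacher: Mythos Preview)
Your outline matches the paper's strategy closely: microlocal reduction, Whitney decomposition with parabolic rescaling, Lee's bilinear estimate, the Mockenhaupt--Seeger--Sogge Kakeya maximal inequality, and interpolation (the paper in fact proves the square function inequality only at $p=10/3$, then interpolates with the sharp $L^6$ result of Beltran--Hickman--Sogge \cite{BelHicSog18P} and the trivial $L^2$ bound). One small correction: Lee's theorem in \cite{Lee-JFA} is already formulated for variable-coefficient oscillatory integral operators, so the obstacle you name is not transferring it from the translation-invariant case; the parabolic rescaling step is needed only to bring each Whitney pair to unit angular separation while keeping the constants in Lee's hypotheses uniform, and that part goes through essentially as you describe.

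The genuine gap in your sketch is the passage from Lee's bilinear estimate, which has $\|f\|_{L^2}\|g\|_{L^2}$ on the right, to a bilinear \emph{square function} inequality with $\bigl\|(\sum_\nu |T^\nu_\lambda f|^2)^{1/2}\bigr\|_{L^{10/3}}$ on the right. In the constant-coefficient Tao--Vargas argument this conversion rests on the locally constant property of $Ef_\theta$ at scale $\lambda^{1/2}$; for $T_\lambda$ there is no such property because the phase and amplitude depend on $z$. The paper addresses this by (i) further decomposing each angular sector radially into $\lambda^{-1/2+\varepsilon}$-blocks, (ii) invoking a phase-rotation lemma to recover an approximate locally constant property for each block on $\lambda^{1/2-\varepsilon}$-cubes $Q_k$, (iii) proving a discretized bilinear inequality on each $Q_k$, and (iv) summing the radial blocks back via a pointwise stability lemma that compares $T_\lambda$ with a family of extension operators $E_{\bar z}$ at scale $\lambda^{1/2-\delta}$, combined with Rubio de Francia's square function estimate and the Fefferman--Stein vector-valued maximal inequality. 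Without this mechanism, ``applying Cauchy--Schwarz'' after Lee's estimate does not by itself produce the square function on the right-hand side, and this is where most of the new work in the paper lies.
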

It is natural to compare our result with the previous one in \cite{MSS-jams}.
From \eqref{eq:main}, we obtain the
$L^4\to L^4$ estimate for $\mathscr F$
whenever $\sigma<-\frac{3}{32}$, which brings in an improvement with respect to the regularity level of order $\frac{1}{32}$.

Now we turns to a couple of corollaries of Theorem \ref{main}. As in \cite{BelHicSog18P-surv}, the local smoothing  theory of Fourier integral operators has many applications to various of interesting problems in harmonic analysis. In this paper, we include three examples of them below which follows  immediately from Theorem \ref{main} and we omit the proofs. One may consult the beautifully written self-contained article \cite{BelHicSog18P-surv} for more details.

The first application concerns the maximal function estimate of Fourier integral operators.
If we write $z=(x,t)$ and let $\mathscr{ F}_tf(x)=\mathscr Ff(x,t)$, then we have the following maximal theorem under the above assumptions in Theorem \ref{main}.
\begin{corollary}
	Assume that $\mathscr F\in I^{\sigma-1/4}(Z,Y;\mathscr C)$ is a Fourier integral operator satisfying all the same conditions in Theorem \ref{main}.
	Then, if $I\subset\R$ is a compact interval and $Z=X\times I$ such that $X$ and $Y$ are assumed to be compact, then we have
	\begin{equation}
		\label{eq:maximal}
		\|\sup_{t\in I}|\mathscr{F}_{t}f(x)|\|_{L^p(X)}\leqslant C\|f\|_{L^p(Y)},
	\end{equation}
	whenever $\sigma<-\bar{s}_{p}-(1/p-\varepsilon(p))$ for all $2\leqslant p\leqslant 6$.
\end{corollary}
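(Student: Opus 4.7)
The plan is to derive \eqref{eq:maximal} from Theorem \ref{main} by a one-dimensional Sobolev embedding in the time variable, exploiting the fact that the embedding $W^{s,p}(I) \hookrightarrow L^\infty(I)$ costs exactly $1/p$ derivatives, which precisely accounts for the shift between the threshold in \eqref{eq:main} and the one claimed in the corollary.

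First I would apply this embedding, valid for any $s > 1/p$, pointwise in $x$ to the function $t \mapsto \mathscr{F}_t f(x)$ on the compact interval $I$. Taking the $L^p(X)$ norm and invoking Fubini to exchange the $x$ integration with the Sobolev norm in $t$ yields
\[
\bigl\|\sup_{t \in I} |\mathscr{F}_t f|\bigr\|_{L^p(X)} \leqslant C \bigl\|\Lambda_t^s \mathscr{F} f\bigr\|_{L^p(Z)},
\]
where $\Lambda_t^s := (1 - \partial_t^2)^{s/2}$ is the Fourier multiplier of order $s$ in the $t$-variable alone.

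Second, I would recognize $\Lambda_t^s \mathscr{F}$ as a Fourier integral operator belonging to $I^{(\sigma + s) - 1/4}(Z, Y; \mathscr{C})$. Indeed, composing $\mathscr{F}$ on the left with a pseudodifferential operator on $Z$ leaves the canonical relation $\mathscr{C}$ unchanged and shifts the order of the symbol by $s$, so the cinematic curvature condition is preserved. A minor technical point is that $\Lambda_t^s$ is only elliptic in the $\tau$-direction rather than homogeneous of degree $s$ on all of $T^*Z \setminus 0$; this is handled by inserting a conic cutoff localized where $|\tau|$ is comparable to $|\xi|$, a region which covers the wave-front set of $\mathscr{F} f$ by virtue of the cone condition. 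The complementary frequency region contributes only a smoothing error.

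Combining these two ingredients with Theorem \ref{main} applied to $\Lambda_t^s \mathscr{F}$ yields the bound whenever $\sigma + s < -\bar{s}_p + \varepsilon(p)$, i.e., $\sigma < -\bar{s}_p + \varepsilon(p) - s$. Letting $s$ decrease to $1/p$ from above produces the range $\sigma < -\bar{s}_p - (1/p - \varepsilon(p))$ stated in the corollary. I do not anticipate any serious obstacle beyond the routine FIO-calculus bookkeeping, since the deep analytic content is entirely packaged inside Theorem \ref{main}; the corollary is extracted from it by what is essentially a soft one-dimensional trace argument in the $t$-variable.
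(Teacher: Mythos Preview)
Your proposal is correct and follows the standard route: the paper itself omits the proof of this corollary, deferring to \cite{BelHicSog18P-surv} (and ultimately to \cite{MSS-jams,Sogge91}), where precisely this one-dimensional Sobolev embedding in $t$---costing $1/p$ derivatives---is used to pass from the local smoothing estimate of Theorem~\ref{main} to the maximal bound. Your handling of $\Lambda_t^s\mathscr{F}$ as an FIO of order $\sigma+s$ with the same canonical relation, together with the conic microlocalization near $|\tau|\approx|\xi|$ justified by the cone condition, is exactly the bookkeeping that makes the argument rigorous.
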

 Notice that it is conjectured in \cite{MSS-jams} that
the maximal estimate \eqref{eq:maximal}
should hold as long as $\sigma<-\bar{s}_{p}$ for all $p\geqslant \frac{2n}{n-1}$. The sharp result for $p\geqslant 6$ is obtained in \cite{BelHicSog18P} and
our result provides certain improvement
of  Corollary 6.3 in \cite{MSS-jams}
for $p\leqslant 6$.

The second applications of Theorem \ref{main} is related to the regularity properties of wave equations on smooth compact Riemmanian manifold.
Let $M$ be a smooth compact manifold without boundary of dimension $n$, equipped with a Riemmanian metric $\GG$ and consider
the Cauchy problem
\begin{equation}
\label{eq:0}
\left\{ \begin{aligned}
(\partial/\partial t)^2-\Delta_\GG ) &u(t,x)= 0,\,(t,x)\in \R\times M,\\
u(0,x)=f(x),&\;
\partial_tu(0,x)=h(x),\\
\end{aligned} \right.
\end{equation}
where $\Delta_\GG$ is the
Beltrami-Laplacian associated to a metric $\GG$. It is a well-known fact that the solution $u$ to this Cauchy problem can be written as
\begin{equation}
\label{eq:wave-solu}
u(x,t)=\mathscr{ F}_0f(x,t)+\mathscr{F}_1h(x,t),
\end{equation}
where $\mathscr{F}_j\in I^{j-1/4}(M\times \R,M;\mathscr C)$ with
\[	\mathscr{C}=\left\{(x,t,\xi,\tau,y,\eta):(x,\xi)=\chi_t(y,\eta),
	\tau=\pm\sqrt{\sum\GG^{jk}\xi_j\xi_k}\right\},
\]
where $\chi_t:T^*M\setminus0\times T^*M\setminus 0$ is given by flowing for time $t$ along the Hamilton vector field $H$ associated to $\sqrt{\sum\GG^{jk}\xi_j\xi_k}$.
 As a consequence, the convexity condition is automatically verified by $\mathscr C$. The following result which improved  Corollary 6.4 of \cite{MSS-jams} is readily deduced from Theorem \ref{main}.
\begin{corollary}
	Let $u$ be the solution to the Cauchy problem
	\eqref{eq:0}.
	If $I\subset\R$ is a compact interval and $0<\delta<\varepsilon(p)$ with $\varepsilon(p)$ is given by \eqref{eq:sigma-n}, then we have
	\begin{equation}
	\label{eq:wave}
	\|u\|_{L^p_{\alpha-\bar{s}_{p}+\delta}(M\times I)}
	\leqslant C
	\Bigl(
	\|f\|_{L^p_\alpha(M)}+\|h\|_{L^p_{\alpha-1}(M)}
	\Bigr), \;2\leqslant p\leqslant 6.
	\end{equation}
\end{corollary}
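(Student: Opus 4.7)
The plan is to reduce the Sobolev estimate \eqref{eq:wave} to the $L^p$-boundedness statement of Theorem~\ref{main} by composing the wave propagators with Bessel-potential pseudo-differential operators of suitable orders. I would proceed in three steps.

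\emph{Step 1: cinematic curvature for the wave canonical relation.} The canonical relation $\mathscr{C}$ is the twisted graph of the Hamiltonian flow of $\sqrt{\sum\GG^{jk}\xi_j\xi_k}$, so the submersion properties \eqref{dkmkldsc} and \eqref{vmklsdmks} are classical. For a fixed $z_0=(x_0,t_0)\in Z$, the image
\[
\Gamma_{z_0}=\varPi_{T^{*}_{z_0}Z}(\mathscr{C})=\bigl\{(\xi,\tau)\in T^{*}_{z_0}Z\setminus 0 : \tau=\pm|\xi|_{\GG(x_0)}\bigr\}
\]
is a pair of smooth conic hypersurfaces, each with a single nonvanishing (indeed strictly positive) principal curvature transverse to the generator. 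Hence $\mathscr{C}$ satisfies the cone condition (in fact the convexity condition mentioned in the introduction), so Theorem~\ref{main} is applicable with $n=2$.

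\emph{Step 2: reduction via Bessel potentials.} Let $\Lambda_M^{s}:=(1-\Delta_\GG)^{s/2}$ and $\Lambda_Z^{s}:=(1-\Delta_{M\times I})^{s/2}$, which are classical elliptic pseudo-differential operators of order $s$ on the compact manifolds $M$ and $M\times I$ respectively. Splitting $u=\mathscr{F}_0 f+\mathscr{F}_1 h$ according to \eqref{eq:wave-solu}, it suffices by the triangle inequality to show for each $j\in\{0,1\}$ that the auxiliary operator
\[
T_j:=\Lambda_Z^{\alpha-\bar{s}_p+\delta}\,\mathscr{F}_j\,\Lambda_M^{-\alpha_j},\qquad \alpha_0=\alpha,\ \alpha_1=\alpha-1,
\]
maps $L^p(M)\to L^p(M\times I)$ boundedly; setting $g_0=\Lambda_M^{\alpha_0}f$, $g_1=\Lambda_M^{\alpha_1}h$ and unwinding the Bessel potentials then delivers \eqref{eq:wave}. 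By the composition calculus of Fourier integral and pseudo-differential operators, which is valid globally because $M$ and $I$ are compact, $T_j$ is again an FIO with the \emph{same} canonical relation $\mathscr{C}$, and a direct accounting of orders gives $T_j\in I^{\sigma-1/4}(Z,M;\mathscr{C})$ with $\sigma=-\bar{s}_p+\delta$.

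\emph{Step 3: invoke Theorem \ref{main}.} Since $\delta<\varepsilon(p)$ by assumption, $\sigma<-\bar{s}_p+\varepsilon(p)$, and Theorem~\ref{main} yields $\|T_j g\|_{L^p(Z)}\leqslant C\|g\|_{L^p(M)}$ for $j=0,1$, which is exactly what is needed. The only substantive conceptual point is Step~1 (verifying the geometric hypotheses that let Theorem~\ref{main} be invoked in this setting); Steps~2 and 3 amount to routine operator-calculus bookkeeping, with the genuine analytic content of the corollary residing entirely inside Theorem~\ref{main}.
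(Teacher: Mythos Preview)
Your proposal is correct and follows exactly the route the paper intends: the paper does not write out a proof at all, merely observing that the solution operators $\mathscr{F}_0,\mathscr{F}_1$ are Fourier integral operators whose canonical relation satisfies the cinematic curvature (indeed convexity) condition, so the corollary ``is readily deduced from Theorem~\ref{main}.'' Your Steps~1--3 are precisely the standard bookkeeping that fleshes this out.
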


The third application of our
main theorem is about
an averaging operator over smooth curves, a question raised by Sogge \cite{Sogge91} and later extended to the higher dimensions  over smooth hypersurface in $\R^n$ by Schlag and Sogge \cite{SchSog97}. Let
$\varSigma_{x,t}\subset\R^2$
be a smooth curve depending smoothly
on the parameters
$(x,t)\in\R^2\times[1,2]$ and
$\dif \sigma_{x,t}$ denotes the
normalized Lebesgue
measure on $\varSigma_{x,t}$.
Following the notations in \cite{Sogge91,SchSog97}, we may assume
$\varSigma_{x,t}=\{y:\Phi(x,y)=t\}$
where $\Phi(x,y)\in C^\infty(\R^2\times\R^2)$ such that its Monge-Ampere determinant is non-singular
\begin{equation}
\label{eq:MA}
{\rm det}
\begin{pmatrix}
0 & \partial\Phi/\partial x  \\
\partial\Phi/\partial y & \partial^2\Phi/\partial x\partial y \\
\end{pmatrix}
\neq 0,\quad \text{when}\;
\Phi(x,y)=t.
\end{equation}
This  is referred to
as Stein-Phong's rotational
curvature condition.

Define the averaging operator by
\begin{equation}
\label{eq:A}
A f(x,t)
=\int_{\varSigma_{x,t}}
f(y)a(x,y)\dif \sigma_{x,t}(y),
\end{equation}
where $a(x,y)$ is a smooth function with compact support in $\R^2\times\R^2$.

If we let
$\mathscr{F}:f(x)\mapsto
Af(x,t)$, then $\mathscr F$
 is a Fourier integral operator of order $-1/2$ with canonical relation
 given by
 \begin{equation}
 \label{eq:cano}
 \mathscr{C}=\{(x,t,\xi,\tau,y,\eta):(x,\xi)=\chi_t(y,\eta),\tau=q(x,t,\xi)\},
 \end{equation}
where $\chi_t$ is a local symplectomorphism, and
the function
$q$ is homogeneous of degree one in $\xi$ and smooth away from $\xi=0$.
Moreover, the rotational curvature condition holds if and only if
\begin{equation}
\label{eq:qq}
\left\{
\begin{aligned}
q(x,\Phi(x,y),\Phi'_x(x,y))&\equiv 1\\
\text{corank}\;\; q''_{\xi\xi}&\equiv 1.\\
\end{aligned} \right.
\end{equation}
In particular, the cinematic curvature condition is fulfilled and one has the following result by Theorem \ref{main}.
\begin{corollary}
	Let $A_t$ be an averaging operator defined in \eqref{eq:A} with $\varSigma_{x,t}$ satisfying the geometric conditions described as above.
	Then there exists a constant $C$ depending on $p$ such that if $f\in L^p(\R^2)$, we have
	\begin{equation}
	\|Af\|_{L^p_{\gamma}(\R^{2+1})}\leqslant C\|f\|_{L^p(\R^2)},\quad 2\leqslant p\leqslant 6,
	\end{equation}
	for all $\gamma<-\bar{s}_p+\frac{1}{2}+\varepsilon(p)$.
\end{corollary}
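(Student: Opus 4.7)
The plan is to deduce the Sobolev bound directly from Theorem \ref{main} via two steps: first, check that the geometric setup gives a canonical relation satisfying the cinematic curvature condition in dimension $n=2$; second, upgrade the $L^p\to L^p$ bound for an FIO of order $\sigma$ to a Sobolev bound of order $\gamma$ by composing with a fractional Bessel potential and computing the shift in the order.

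For the first step, I would parametrize $\mathscr C$ in \eqref{eq:cano} by the $(2n+1)$ coordinates $(y,\eta,t)\in (T^{*}Y\setminus 0)\times\R$, using $(x,\xi)=\chi_{t}(y,\eta)$ and $\tau=q(x,t,\xi)$. Since $\chi_{t}$ is a symplectomorphism, $\varPi_{T^{*}Y}$ is identified with projection onto $(y,\eta)$ and automatically has rank $2n$, while $\varPi_{Z}(y,\eta,t)=(\pi_{X}\circ\chi_{t}(y,\eta),t)$ has rank $n+1$ because $\pi_{X}\circ\chi_{t}$ is already a submersion on its image. Fixing $z_{0}=(x_{0},t_{0})$, the image $\Gamma_{z_{0}}$ is the graph $\{(\xi,q(x_{0},t_{0},\xi)):\xi\neq 0\}$; homogeneity of $q$ forces one vanishing principal curvature along the radial direction, and the second line of \eqref{eq:qq} (namely $\mathrm{corank}\,q''_{\xi\xi}\equiv 1$, which is a direct reformulation of the Monge--Amp\`ere condition \eqref{eq:MA}) guarantees the remaining $n-1=1$ principal curvature is nonzero. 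Thus all three pieces of the cinematic curvature condition hold.

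For the second step, I would use that $Af$ has compact $x$-support (from $a(x,y)$ having compact support) and reduce the claim $\|Af\|_{L^{p}_{\gamma}(\R^{2+1})}\leqslant C\|f\|_{L^{p}(\R^{2})}$ to the $L^{p}\to L^{p}$ boundedness of $(1-\Delta_{x,t})^{\gamma/2}A$. Modulo lower-order pseudodifferential remainders, this composition is an FIO with the same canonical relation $\mathscr C$ but classical order $-\tfrac{1}{2}+\gamma$, i.e.\ it lies in $I^{\sigma-\frac{1}{4}}$ with $\sigma=-\tfrac{1}{2}+\gamma$. Applying Theorem \ref{main} in dimension $n=2$, boundedness holds whenever $-\tfrac{1}{2}+\gamma<-\bar{s}_{p}+\varepsilon(p)$, which rearranges precisely to $\gamma<-\bar{s}_{p}+\tfrac{1}{2}+\varepsilon(p)$ for $2\leqslant p\leqslant 6$.

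The main obstacle is really in the first step: one needs to check carefully that the rotational curvature hypothesis \eqref{eq:MA} on $\Phi$ is equivalent to $\mathrm{corank}\,q''_{\xi\xi}\equiv 1$ for the homogeneous defining function $q$ of the canonical relation, so that the cone condition translates cleanly. Once this differential-geometric bookkeeping is done, the second step is a formal consequence of the pseudodifferential calculus together with Theorem \ref{main}, with a standard partition of unity used to pass from the compactly supported local model to the global statement on $\R^{2}$.
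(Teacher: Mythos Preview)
Your plan is correct and matches the paper's approach: the paper explicitly notes before the corollary that $\mathscr F:f\mapsto Af$ is an FIO of order $-\tfrac12$ whose canonical relation \eqref{eq:cano} satisfies the cinematic curvature condition (via the equivalence \eqref{eq:qq} with the Monge--Amp\`ere hypothesis \eqref{eq:MA}), and then simply states that the corollary follows from Theorem~\ref{main}, omitting the proof and referring to \cite{BelHicSog18P-surv} for details. Your two steps---verifying the cinematic curvature condition and then composing with $(1-\Delta_{x,t})^{\gamma/2}$ to shift the order to $\sigma=-\tfrac12+\gamma$ before invoking Theorem~\ref{main}---are exactly the intended deduction.
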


\subsection{Square function estimates and an overview of the proof}
Let us briefly
recall the strategy in \cite{MSS-jams}, which reduced Theorem \ref{main} to a square function estimate for oscillatory integrals of H\"ormander type.
By interpolation with the trivial $L^2\to L^2$ estimate and the sharp $L^6\to L^6$ local smoothing estimate of \cite{BelHicSog18P},
\begin{equation}
\label{eq:L6}
\bigl\|\mathscr{ F }f\bigr\|_{L^6_{\rm loc}(Z)}\leqslant C\|f\|_{L^6_{\rm comp}(Y)}, \quad
\text{for all}\quad\sigma<-\frac{1}{6},
\end{equation}
one may reduce
\eqref{eq:main} to
\beq \label{eq:a1}
\bigl\|\mathscr{ F }f\bigr\|_{L^{\frac{10}{3}}_{\rm loc}(Z)}\leqslant C\|f\|_{L^{\frac{10}{3}}_{\rm comp}(Y)}, \quad
\text{for all}\quad\sigma<-\frac{1}{20}.
\eeq

Working microlocally, one may write
an operator
$\mathscr{F}$ in the class $I^{\sigma-1/4}(Z,Y;\mathscr C)$
with $\mathscr C$ satisfying the cinematic curvature condition in an appropriate local
coordinates as an
oscillatory integrals
\[
\mathscr{F}f(z)=\int e^{i\phi(z,\eta)}a(z,\eta) \widehat{f}(\eta)\,\dif\eta,
\]
where $a$ is a smooth symbol of order $\sigma$ supported in a specific conic subset  of $T^*Z$ and the cinematic curvature condition is manifested by explicit analytic properties of phase function $\phi$, which is homogeneous one in $\eta$. In particular, we may assume that $a(z,\eta)$ vanishes unless $\eta-$is contained in a small neighborhood of the north pole
$\mathbf{e}_2=(0,1)$.
By the standard Littlewood-Paley decomposition and scaling argument,
we are reduced to the study of an operator
\begin{equation}
\label{eq:T}
T_\lambda(f)(z):=\int
e^{i\lambda\phi(z,\eta)}
a_{\lambda}(z,\eta)\,{f}(\eta)\,
\dif\eta,
\end{equation}
with $\lambda$ being taken sufficiently large and $a_\lambda$ being a smooth symbol of order zero after an appropriate normalization\footnote{
We refer to the next section for more details}.
Next, one would like to introduce an additional decomposition with respect to the $\eta$-variable in the angular direction. Specifically, we make a covering over the unit circle of the $\eta$-variable  by sectors of radius   $\approx\lambda^{-1/2}$.
There is an associated decomposition of the oscillatory integral operator with respect to the angular directions
\beq
\label{eq:Tnu}
T_\lambda^\nu(f)(z)
=\int
e^{i\lambda\phi(z,\eta)}
a_{\lambda}^{\nu}(z,\eta){f}(\eta)\,\dif \eta,
\eeq
so that $T_\lambda (f)=\sum_\nu T^\nu_\lambda(f)$.
We shall return to this issue later with more details in the next section.

With the above preparation at hand,
there are two crucial ingredients towards our main theorem  in spirit of Mockenhaupt, Seeger and Sogge \cite{MSS-Ann,MSS-jams}.
The first one  is a sharp $L^2-$estimate for  variable coefficient versions of the Kakeya maximal function,
which had appeared in the work of C\'ordoba \cite{cor}on Bochner-Riesz multiplier problems. Let $Z$ and $Y$ be as in the last subsection and for $(y,\xi)\in\varPi_{T^{*}Y}(\mathscr C)$, we set
\begin{equation}
\label{eq:curves}
\gamma_{y,\xi}=\{z\in Z:(z,\zeta,y,\xi)\in\mathscr C, \;\text{for some } \zeta\},
\end{equation}
which is a smooth curve immersed in $Z$.
Fix a smooth metric on $Z$ and define for $\delta>0$ being a small number
\[
R^\delta_{y,\,\xi}=\{z\in Z:{\rm dist}(z,\gamma_{y,\xi})<\delta\}.
\]
Let $\alpha\in C_0^\infty(Y\times Z)$ and put
\[
\mathcal{M}_\delta g(y)=\sup_{\xi\in\varPi_{T^{*}_yY}(\mathscr C)}\frac{1}{{\rm Vol}(R^\delta_{y,\,\xi})}\Bigl|\int_{R^\delta_{y,\,\xi}}\alpha(y,z)g(z)\dif z\Bigr|.
\]
Then we have
\begin{equation}
\label{eq:kky-p-2}
\|\mathcal{M}_\delta\|_{L^p\to L^p}\leqslant C\Bigl(\log\frac{1}{\delta}\Bigr)^{\frac{1}{2}},\;\text{for } 2\leqslant p\leqslant\infty,
\end{equation}
 which is readily deduced from
the following theorem established in \cite{MSS-jams} by interpolating  with the trivial $L^\infty\to L^\infty$ estimate.
\begin{theorem}
	\label{kky}
	Assume $Z$ and $Y$ are para-compact smooth manifold equipped with a smooth metric and ${\rm dim}\, Z=3$, ${\rm dim}\, Y=2$. Suppose that $\mathscr C$ satisfies the cinematic curvature condition. Then there exists
	a constant $C$ such that if $g\in L^2(Z)$, then
	\begin{equation}
	\label{eq:kky}
	\|\mathcal{M}_\delta g\|_{L^2(Y)}\leqslant C\,\sqrt{\log\frac{1}{\delta}}\;\|g\|_{L^2(Z)}.
	\end{equation}
\end{theorem}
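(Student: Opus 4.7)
The plan is to adapt C\'ordoba's $L^2$-duality method (as in \cite{cor} and \cite{MSS-jams}) to the curvilinear setting by extracting, from the cinematic curvature condition, a pointwise tube-intersection bound together with a dyadic angular counting estimate. First, by a standard measurable selection one chooses $\xi : Y \to T^*Y$ measurable with $\xi(y) \in \varPi_{T^*_y Y}(\mathscr C)$ realizing the supremum in the definition of $\mathcal{M}_\delta g$ up to a factor two. Abbreviating $T_y := R^\delta_{y, \xi(y)}$, which is a curvilinear $\delta$-tube in $Z$ of volume $\sim \delta^2$, we have $\mathcal{M}_\delta g(y) \lesssim \delta^{-2}\int_Z \alpha(y,z)\chi_{T_y}(z)|g(z)|\,dz$, and by duality the inequality \eqref{eq:kky} is equivalent to the adjoint bound for $\mathcal M_\delta^\ast f(z) = \delta^{-2}\int_Y f(y)\alpha(y,z)\chi_{T_y}(z)\,dy$. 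Expanding $\|\mathcal M_\delta^\ast f\|_2^2$ as a double integral and invoking Schur's test, everything reduces to the kernel bound
\[
\sup_{y \in Y}\int_Y |T_y \cap T_{y'}|\,dy' \lesssim \delta^4 \log(1/\delta).
\]

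This kernel bound rests on two geometric facts. The first is a pointwise intersection estimate: if the curves $\gamma_{y, \xi(y)}$ and $\gamma_{y', \xi(y')}$ come within $\delta$ of each other at a common point $z_0$ and their tangents there span an angle $\theta \geqslant \delta$, then $|T_y \cap T_{y'}| \lesssim \delta^{3}/\theta$, while for $\theta \leqslant \delta$ the trivial bound $|T_y \cap T_{y'}| \leqslant |T_y| \lesssim \delta^2$ suffices. Using the submersion condition \eqref{dkmkldsc} to straighten $\gamma_{y, \xi(y)}$ in local coordinates near $z_0$ reduces this to the familiar flat-tube parallelepiped calculation in $\R^3$ up to lower-order perturbations at scale $\delta$. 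The second fact is an angular counting statement: two $1$-dimensional curves generically miss each other in the $3$-dimensional $Z$, so the set of $y' \in Y$ with $T_y \cap T_{y'} \neq \emptyset$ is the $\delta$-neighborhood of a $1$-dimensional curve $\Upsilon_y \subset Y$; parametrize $\Upsilon_y$ by arc-length $s$. The cone condition (one non-vanishing principal curvature of $\Gamma_{z_0}$ at each base point $z_0$) translates, through the canonical relation, into the statement that $s \mapsto \theta(y, y'(s))$ has non-vanishing derivative, yielding
\[
|\{y' : \theta(y, y') \sim \theta_0, \; \mathrm{dist}(\gamma_{y'}, \gamma_y) \leqslant \delta\}| \sim \delta\,\theta_0,
\]
the factor $\delta$ being transverse thickness and $\theta_0$ the arc-length associated with the angular annulus.

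Summing dyadically over $\theta_0 \in [\delta, 1]$ combines these two ingredients: the contribution from $\theta \leqslant \delta$ is $\lesssim \delta^2 \cdot \delta^2$, and for each dyadic $\theta_0$ one gets $(\delta^3/\theta_0)\cdot \delta\theta_0 = \delta^4$, for a grand total $\lesssim \delta^4 \log(1/\delta)$, as required. The principal obstacle is the rigorous extraction of the angular nondegeneracy in the second geometric fact: one must carefully unpack $\mathscr C$ in local coordinates near a generic near-intersection point, use the submersion conditions \eqref{dkmkldsc}--\eqref{vmklsdmks} to separate the data of $y'$ on $\Upsilon_y$ from its transverse displacement, and then convert the one-dimensional curvature of $\Gamma_{z_0}$ into a quantitative lower bound on $|\partial_s \theta|$ along $\Upsilon_y$---a bound that must remain uniform despite the measurable (and hence generally non-smooth) nature of the linearizing choice $\xi(\cdot)$.
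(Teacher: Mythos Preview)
The paper does not give its own proof of this theorem: it is quoted verbatim from \cite{MSS-jams} and used as an input to the local smoothing argument. So there is no in-paper argument to compare against, and your proposal must be judged on its own. Unfortunately the second geometric fact---the angular counting estimate---is false for adversarial linearizations, and with it the whole Schur scheme collapses.

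Take the normalized model $\phi(x,t,\eta)=\langle x,\eta\rangle+\tfrac{t}{2}\eta_1^2/\eta_2$, so that $\gamma_{y,s}(t)=(y_1-ts,\,y_2+\tfrac{t}{2}s^2,\,t)$. Fix $y=0$, $\xi(0)=0$, so $\gamma_0$ is the $t$-axis. For $y'=(a,b)$ with $a\in(0,1]$ and $b\in[-a/2,-a^2/2]$, set $\xi(y')=-2b/a$. This is a perfectly legitimate measurable selection with $|\xi(y')|\leqslant 1$, and one checks that $\gamma_{y',\xi(y')}$ meets $\gamma_0$ \emph{exactly} at time $t=-a^2/(2b)\in(0,1]$, with angle $\theta\approx|\xi(y')|=2|b|/a$. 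Hence $\{y':T_0\cap T_{y'}\neq\emptyset\}$ contains a set of \emph{full two-dimensional measure}, not the $\delta$-neighbourhood of a curve; and the sub-level set $\{y':\theta\sim\theta_0\}$ within it has measure $\sim\theta_0^3$, not $\delta\theta_0$. Feeding this into your dyadic sum gives
\[
\int_Y|T_0\cap T_{y'}|\,dy'\;\gtrsim\;\sum_{\theta_0}\frac{\delta^3}{\theta_0}\cdot\theta_0^3\;\sim\;\delta^3,
\]
so Schur yields only $\|\mathcal{M}_\delta\|_{L^2\to L^2}\lesssim\delta^{-1/2}$, missing the target by a full power of $\delta$. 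The heuristic ``two curves in $3$-space generically miss'' is exactly what the supremum over $\xi$ defeats: the adversary is free to pick $\xi(y')$ so that $\gamma_{y',\xi(y')}$ \emph{does} hit $\gamma_y$, for every $y'$ in a fat set.

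The argument in \cite{MSS-jams} avoids this trap by not relying on a bare Schur bound for $|T_y\cap T_{y'}|$. One replaces the sharp tube cutoff by a smooth kernel, decomposes dyadically in the frequency scale (yielding $O(\log(1/\delta))$ pieces), and controls the supremum over $\xi$ by a Sobolev embedding in that one-dimensional parameter. Each dyadic piece then becomes an oscillatory integral operator to which the non-degeneracy conditions \eqref{dkmkldsc}--\eqref{vmklsdmks} and the cone condition apply directly via $TT^*$ and stationary phase, giving an $O(1)$ bound per shell. The $\sqrt{\log(1/\delta)}$ arises from summing the shells in $\ell^2$. This Fourier-analytic route is not optional: no purely measure-theoretic Schur argument on tube overlaps can succeed here, since (as the example shows) the kernel row-sum can be $\sim\delta^3$ for a single bad $y$ even though the operator norm is $\sim 1$.
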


Aside from \eqref{eq:kky-p-2}, the second ingredient is to show the following square function estimate in order to obtain
Theorem \ref{main}. One may consult \cite{MSS-jams} for more details how our main theorem can be deduced from
\eqref{eq:kky-p-2} and \eqref{eq:sf} below.
\begin{proposition}
	\label{prop:sq-f}
	Let $Z$ and $Y$ be  smooth paracompact manifolds of dimension
	three and two respectively.
	Suppose that $\mathscr F\in I^{\sigma-\frac{1}{4}}(Z,Y;\mathscr C)$,
	is a Fourier integral operator of order $\sigma$, whose canonical relation $\mathscr C$ satisfies the cinematic curvature condition.
	Let $T_\lambda$ and $T^\nu_\lambda$ be
	given by
	\eqref{eq:T} and \eqref{eq:Tnu},
	 then for $\lambda$ being sufficiently large, one has  up to an error term which behaves like $\mathcal{O}(\lambda^{-N})$ for arbitrarily large $N$
	\begin{equation}
	\label{eq:sf}
	\bigl\|T_\lambda (f)\bigr\|_{L^{\frac{10}{3}}(\R^{3})}\lessapprox_{\phi,N,\varepsilon}\lambda^{\frac{1}{20}}\Bigl\|\bigl(\sum_\nu|{T}_\lambda^\nu (f)|^2\bigr)^{1/2}\Bigr\|_{L^{\frac{10}{3}}(\R^{3})}.
	\end{equation}
\end{proposition}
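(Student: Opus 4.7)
The plan is to implement the Tao--Vargas bilinear-to-linear paradigm using Lee's bilinear oscillatory integral estimate as the variable coefficient substitute for the bilinear cone restriction theorem. Starting from the identity
\[
\|T_\lambda f\|_{L^{10/3}(\R^{3})}^{2} = \|T_\lambda f\cdot\overline{T_\lambda f}\|_{L^{5/3}(\R^{3})},
\]
I expand $T_\lambda f\cdot\overline{T_\lambda f}=\sum_{\nu_1,\nu_2}T_\lambda^{\nu_1}f\cdot\overline{T_\lambda^{\nu_2}f}$ and perform a Whitney-type decomposition of the pair index in the angular variable into a diagonal block $\mathcal{D}=\{|\nu_1-\nu_2|\lesssim\lambda^{-1/2}\}$ and dyadic off-diagonal blocks $\Lambda_k=\{|\nu_1-\nu_2|\sim 2^{-k}\}$ for $\lambda^{-1/2}\lesssim 2^{-k}\lesssim 1$.

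The diagonal block is immediately controlled by Cauchy--Schwarz,
\[
\Big\|\sum_{(\nu_1,\nu_2)\in\mathcal{D}}T_\lambda^{\nu_1}f\cdot\overline{T_\lambda^{\nu_2}f}\Big\|_{L^{5/3}}\lesssim\Big\|\bigl(\sum_\nu|T_\lambda^\nu f|^2\bigr)^{1/2}\Big\|_{L^{10/3}}^{2},
\]
which is the squared right-hand side of \eqref{eq:sf}. For each off-diagonal block I would apply an anisotropic parabolic rescaling attached to a pair of sectors at separation $2^{-k}$ in order to normalize them to unit angular separation; in the rescaled coordinates Lee's bilinear oscillatory integral estimate from \cite{Lee-JFA} applies and, after unwinding the rescaling, provides a bound with a gain of the form $\lambda^{-\alpha}2^{k\beta}$ for explicit exponents $\alpha,\beta>0$ dictated by the scaling law of the phase. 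Combining the bilinear estimate on a single pair with an $\ell^{2}$-orthogonality (Cauchy--Schwarz) recombination over $(\nu_1,\nu_2)\in\Lambda_k$ and interpolating against the trivial $L^{2}$ bilinear bound produces, for each $k$, an estimate
\[
\Big\|\sum_{(\nu_1,\nu_2)\in\Lambda_k}T_\lambda^{\nu_1}f\cdot\overline{T_\lambda^{\nu_2}f}\Big\|_{L^{5/3}}\lesssim \lambda^{1/10}\,2^{-\varepsilon k}\Big\|\bigl(\sum_\nu|T_\lambda^\nu f|^2\bigr)^{1/2}\Big\|_{L^{10/3}}^{2}.
\]
Summing the resulting geometric series in $k$ and taking square roots yields \eqref{eq:sf} with the exponent $\lambda^{1/20}$.

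The principal obstacle will be the implementation of the parabolic rescaling in the variable coefficient setting. Unlike the constant coefficient cone, the phase $\phi(z,\eta)$ does not admit exact anisotropic dilation symmetries, so after restricting to a $\lambda^{-1/2}2^{k}$-wide pair of sectors and applying the natural anisotropic dilation, the resulting phase is only approximately of the standard H\"ormander type; one must verify, uniformly in $k$ and $\lambda$, that the cinematic curvature hypotheses required by Lee's theorem remain intact, that the symbol estimates transfer with constants independent of $k$, and that the error terms generated by the curvature of the sectors are absorbed into an $\mathcal{O}(\lambda^{-N})$ remainder. A secondary difficulty is the delicate exponent bookkeeping: the factor $|\Lambda_k|\sim 2^{k}$ coming from the Whitney count must be balanced against the bilinear gain with exactly the right margin to produce the precise exponent $1/20$ needed to recover \eqref{eq:a1}, and any sub-optimal treatment of either the diagonal or off-diagonal regime would degrade the final regularity below that advertised in Theorem \ref{main} at $p=10/3$.
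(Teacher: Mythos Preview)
Your outline captures the Tao--Vargas skeleton correctly (Whitney decomposition, diagonal by Cauchy--Schwarz, off-diagonal by parabolic rescaling plus Lee's bilinear theorem), and you have rightly flagged the variable-coefficient rescaling as a nontrivial issue. But there is a genuine gap at the step where you write ``combining the bilinear estimate on a single pair with an $\ell^2$-orthogonality recombination \ldots produces
\[
\Big\|\sum_{(\nu_1,\nu_2)\in\Lambda_k}T_\lambda^{\nu_1}f\cdot\overline{T_\lambda^{\nu_2}f}\Big\|_{L^{5/3}}\lesssim \lambda^{1/10}\,2^{-\varepsilon k}\Big\|\bigl(\textstyle\sum_\nu|T_\lambda^\nu f|^2\bigr)^{1/2}\Big\|_{L^{10/3}}^{2}.\text{''}
\]
Lee's theorem controls a bilinear product by $\|f_{j,l}\|_{L^2}\|f_{j,l'}\|_{L^2}$, i.e.\ by $L^2$ norms of the \emph{input}, not by the square-function norm $\bigl\|(\sum_\nu|T_\lambda^\nu f|^2)^{1/2}\bigr\|_{L^{10/3}}$ of the \emph{output}. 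Nothing you have written explains how to bridge these two quantities, and this bridge is in fact the core of the paper's argument. After rescaling to unit separation the paper does \emph{not} apply Lee's theorem directly to $f$; instead it (i) subdivides each angular sector further into $\lambda^{-1/2+\varepsilon}$-blocks in the radial direction, (ii) invokes a locally-constant lemma to freeze $|\mathscr T_{\lambda,k}^{\nu,j}f|$ on cubes $Q_k$ of side $\lambda^{1/2-\varepsilon}$, (iii) applies a \emph{discrete} bilinear estimate with coefficients $c^{\nu,j}$ equal to those frozen values, and (iv) re-sums the radial blocks using a pointwise stability lemma comparing $\mathscr T_\lambda$ with constant-coefficient extension operators, together with Rubio de Francia's square-function inequality and the Fefferman--Stein vector-valued maximal theorem. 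Without steps (i)--(iv) your ``$\ell^2$ recombination'' can at best yield $(\sum_\nu\|f_\nu\|_{L^2}^2)^{1/2}$ on the right, which is a different and strictly weaker object than the square function in \eqref{eq:sf} and does not close the loop at $p=10/3$.

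A minor correction on the exponent bookkeeping: the paper obtains $(\lambda 2^{-2j})^{1/10}$ at Whitney scale $2^{-j}$, which is \emph{largest} at the coarsest scale $j=O(1)$, not geometrically decaying in $j$ as your $2^{-\varepsilon k}$ suggests. The sum over the $O(\log\lambda)$ scales is what produces the $\lessapprox$ loss rather than a convergent series; in particular no interpolation with a ``trivial $L^2$ bilinear bound'' is used or needed.
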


The proof Proposition \ref{prop:sq-f} will occupy the rest part of this  paper and will be  divided roughly into three parts.
We first explore an equivalence of \eqref{eq:sf} with its bilinear version in Section \ref{sect3} and then we dedicated Section \ref{sect4} to the proof of this bilinear square-function estimate based on a bilinear oscillatory integral estimates (see Section \ref{sect2}). The implementation of
bilinear oscillatory integral estimates calls for an application of the locally constant property which is allowed by the uncertainty principle. To this end, we need to introduce an additional decomposition along the radial directions cutting each sector into a union of blocks. Finally, in Section \ref{sect5}, we add up these blocks along the radial direction after the use of bilinear oscillatory integral estimates,  by adapting  a strategy of \cite{BelHicSog18P},  which is an approximation argument via stability property of oscillatory integrals estimates in square function norms.

We end up this section with an explanation on the reason why we only focus on the two dimensional case.
Indeed, the square function estimate
\eqref{eq:sf} can be generalized  verbatim to higher dimensions with ${\rm dim} \,Z=n+1$
and ${\rm dim}\, Y=n\geqslant 3$ as can be seen in the last section.
However, the intention of using the strategy of this paper to answer the question raised by Beltran, Hickman and Sogge in \cite{BelHicSog18P} on the local smoothing property of Fourier integral operators satisfying the cinematic curvature conditions when $p\leqslant q_n$ with or without the convexity assumption, is less satisfactory  when $n\geqslant 3$.
Indeed, one may check that the regularity result of higher dimensions derived by means of the method used in this paper is even worse than that obtained simply by interpolating the
sharp  $L^{q_n}\to L^{q_n}$ estimate in
\cite{BelHicSog18P} and the trivial $L^2\to L^2$ estimate.
This is because the bilinear oscillatory integral estimates as well as $L^p-$ estimate for Kakeya maximal functions would have to afford an amount of loss of derivatives in dimensions higher than two.

 However, this fact does not mean that the Mockenhaupt-Seeger-Sogge
 approach via ``
 square functions $\oplus$ Kakeya"
 is not promising towards the resolution of the local smoothing conjecture, at least in the constant variable setting.

\subsection*{Notations}
If $a$ and $b$ are two positive quantities, we write $a\lesssim b$ when there exists a constant $C>0$ such that $a\leq C b$ where the constant will be clear from the context. When the constant depends on some other quantity $M$, we emphasize the dependence by writing $a\lesssim_M b$.
We will write $a\approx b$ when we have both $a\lesssim b$ and $b\lesssim a$. We will write $a\ll b$ (resp. $a\gg b$) if there exists a sufficiently  large constant
$C>0$ such that $Ca\leq  b$ (resp. $a\geq Cb$).
We adopt the notion of nature numbers $\mathbb{N}=\mathbb{Z}\cap [0,+\infty)$.
For $\lambda\gg 1$,
we use ${\rm RapDec}(\lambda)$ to mean a quantity rapidly decreasing in $\lambda$.
We use $a\lessapprox b$
to mean $a\lesssim_\varepsilon \lambda^\varepsilon b$ for arbitrate $\varepsilon$.
\\
\\
Throughout this paper, $w_B$ is a rapidly decaying weights concentrated on the ball centered at $c(B)$ and of radius $r(B)$,
\[
w_B(z)\lesssim \left(1+\frac{|z-c(B)|}{r(B)}\right)^{-N},\quad N\gg 1.
\]

\section{Preliminaries and Reductions}\label{sect2}
Given a point $(z_0,\zeta_0,y_0,\eta_0)\in T^*Z\setminus 0\times T^*Y\setminus 0$,
there exists a sufficiently small local conic coordinate patch around it,
along with a smooth function $\phi(z,\eta)$
such that $\mathscr C$ is given by
\begin{equation}
\label{qmmkds}
\{
(z,\phi'_z(z,\eta),\phi'_\eta(z,\eta),\eta):
\eta \in(\R^2\setminus 0)\cap \varGamma_{\eta_0}
\}
\end{equation}
where $\varGamma_{\eta_0} $ denotes a conic neighborhood of $\eta_0$.

By splitting
$z=(x,t)\in\R^2\times\R$ into space-time variables, where
we put $z_0=\mathbf{0}$ without loss of generality,
any operator
$\mathscr{F}$ in the class $I^{\sigma-1/4}(Z,Y;\mathscr C)$
with $\mathscr C$ satisfying the cinematic curvature condition can be written in an appropriate local
coordinates as a finite sum of
oscillatory integrals
\[
\mathscr{F}f(x,t)=\int_{\R^n} e^{i\phi(x,t,\eta)}b(x,t,\eta) \widehat{f}(\eta)\,\dif\eta,
\]
where $b$ is a smooth symbol of order $\sigma$.  We may assume that the support of the  map $z\to b(z,\eta) $ is contained in a ball
$B(\mathbf{0},\varepsilon_0)$, with $\varepsilon_0>0$ being sufficiently small and $\eta\to b(z,\eta)$ is supported in a conic region $\mathcal{V}_{\varepsilon_0}$, i.e.
$$b(x,t,\eta)=0
\;\text{if}\; \eta\notin\mathcal{V}_{\varepsilon_0}
:=\{\xi=(\xi_1,\xi_2)\in\R^2\backslash 0:|\xi_1|\leqslant \varepsilon_0\, \xi_2\}.$$

Fix $\lambda\gg 1$ and $\beta\in C_c^\infty(\mathbb{R})$ which vanishes outside  the interval $(1/4, 2 )$  and equals one in $(1/2, 1)$. By standard Littlewood-Paley decomposition,
\eqref{eq:a1} can be deduced from
\beq \label{eq:34}
\|\mathscr{F}_{\lambda}f\|_{L^{\frac{10}{3}}(\R^3)}
\lessapprox \lambda^{\frac{1}{20}}\|f\|_{L^{\frac{10}{3}}{(\R^2})},
\eeq
where
$\mathscr{F}_\lambda $ is an operator
\begin{equation}
\label{mklsmcd}
\mathscr{F}_\lambda f(x,t)=
\int_{\R^2}
e^{i\phi(x,t,\eta)}
b^\lambda (x,t,\eta)\widehat{f}(\eta)\,\dif\eta,
\end{equation}
and
\beq
b^\lambda(z,\eta)=b(z,\eta)\frac{1}{(1+|\eta|^2)^{\sigma/2}}\beta\Big(\frac{\eta}{\lambda}\Big).
\eeq

Let $a(z,  \eta)\in C_c^\infty(\R^3\times \R^2)$ with compact support contained in $B(0,\varepsilon_0)\times B({\bf e_2},\varepsilon_0)$. Assume $\mathcal{C}(\mathbf{e}_2,\varepsilon_0)
:=B(\mathbf{e}_2,\varepsilon_0)\cap\mathbb{S}^{1}$ and
make angular decomposition with respect to the $\eta$-variable by cutting
$\mathcal{C}(\mathbf{e}_2,\varepsilon_0)$ into
$N_\lambda\approx\lambda^{\frac{1}{2}}$ many sectors
$\{\theta_{\nu}: 1 \leqslant \nu \leqslant N_ \lambda\}$,
each  $\theta_{\nu}$  spreading an angle  $\approx_{\varepsilon_0}\lambda^{-1/2}$.
We denote by $\kappa_\nu\in \mathbb{S}^{1}$  the center of $\theta_{\nu}$.

Let $\{\chi_{\nu}(\eta)\}$ be a series of smooth  cutoff function associated with the decomposition in the angular direction,
each of which is  homogeneous of degree $0$,
such that $\{\chi_\nu\}_{\nu}$ forms a partition  of unity on the unit circle
and then extended homogeneously to $\R^2\setminus 0$ such that
\begin{equation*}\left\{\begin{aligned}
&\sum_{0\leqslant \nu\leqslant N_\lambda} \chi_{\nu}(\eta)\equiv 1,\;\;\forall \eta \in \mathbb{R}^2\setminus 0,\\
&|\partial^{\alpha} \chi_{\nu}(\eta)|\leqslant C_\alpha \lambda^{\frac{|\alpha|}{2}},\;\; \forall \;\alpha\in \mathbb{N}^2 \; \text{if}\; |\eta|=1.\end{aligned}\right.\end{equation*}
Define
\begin{align}
\label{eq:TT}
T_\lambda f(z)=\int e^{i\lambda \phi(z,\eta)}a(z,\eta) f(\eta)\,\dif\eta,\;\;
T_\lambda^\nu f(z)
=\int
e^{i\lambda\phi(z,\eta)}
a^{\nu}(z,\eta)f(\eta)\,\dif \eta,
\end{align}
where $a^{ \nu}(z,\eta)=\chi_\nu(\eta)a(z,\eta)$.
Then $T_\lambda f=\sum_\nu T_\lambda^\nu f$.
As explained in the first section, we would have \eqref{eq:a1} provided we could have proved the following square function estimate
\beq \label{eq:333'}
\|T_{\lambda}f\|_{
	L^{\frac{10}{3}}(\R^{2+1})}
\lessapprox \lambda^{1/20}
\Bigl\|\bigl(\sum_{\nu}
|T_{\lambda}^{\nu}f|^2\bigr)^{1/2}\Bigr\|_{L^{\frac{10}{3}}(\R^{2+1})}+{\rm RapDec}(\lambda)\|f\|_{L^{\frac{10}{3}}(\R^2)}.
\eeq

For technical reasons, we  assume $a$ is of the form  $a(z,\eta)=a_1(z) a_2(\eta)$, where $$a_1\in C_c^\infty (B(0,\varepsilon_0)), \quad a_2\in C_c^\infty(B({\bf e_2},\varepsilon_0)).$$ The general cases may be reduced to this special one via  the following observation
\begin{align*}
T_\lambda f(z)=\int_{\R^3} e^{i(z,\xi)}
\Bigl(\int_{\R^2} e^{i\phi(z,\eta)}\psi(z)\widehat{a}(\xi, \eta)  f(\eta) \dif \eta\Bigr) \dif \xi,
\end{align*}
and that $\xi\mapsto\widehat{a}(\xi,\eta)$ is a Schwartz function,
where $\psi(z)$ is a compactly supported smooth function and equals $1$ on ${\rm supp}_z\; a$.

We may reformulate
\eqref{dkmkldsc} \eqref{vmklsdmks} and the curvature condition  as
\begin{itemize}
	\item[$\mathbf{H}_1$] ${\rm rank} \;\partial_{z\eta}^2 \phi(z,\eta)=2$  for all $(z,\eta)\in  {\rm supp}\;a$.
	\item[$\mathbf{H}_2$]  Define the Gauss map $G: {\rm supp}\;a \rightarrow \mathbb{S}^{2}$ by $G(z,\eta):=\frac{G_0(z,\eta)}{|G_0(z,\eta)|}$ where
	\beq
	G_0(x,\eta):=\partial_{\eta_1}\partial_z\phi(z,\eta)\wedge \partial_{\eta_2}\partial_z\phi(z,\eta).
	\eeq
	The curvature condition
	\beq
	{\rm rank}\; \partial_{\eta\eta}^2\langle \partial_z\phi(z,\eta), G(z,\eta_0) \rangle|_{\eta=\eta_0}=1
	\eeq
	holds for all $(z,\eta_0)\in {\rm supp}\;a$.
\end{itemize}
The process of reduction in the above paragraphs is quite standard and we recommend the reader to consult the works \cite{BelHicSog18P-surv,BelHicSog18P,MSS-Ann,MSS-jams}  in the literature.

\subsection{Normalization of the phase function}

The conditions  $\mathbf{H}_1$, $\mathbf{H}_2$
imply  that there exists a special coordinate system,
so that the phase function $\phi(z,\eta)$ can be written in a \emph{normalized} form.
More precisely,
if we write $z=(x,t)$ and assume the normal vector $G(z,\eta)$ is parallel to the $t$-direction at $(\mathbf{0},\mathbf{e}_2)$,
then up to multiplying harmless factors to $T_\lambda f$ and $f$,
we can write $\phi$ in this coordinate as
\beq\label{eq:18}
\phi(x,t,\eta)=\langle x,\eta\rangle
+\frac{t}{2}\,
\partial_t\partial_{\eta_1}^2\phi(\mathbf{0},\mathbf{e}_2)\eta_1^2/\eta_2
+\eta_2\,\mathcal{E}(x,t,\eta_1/\eta_2)
\eeq
where $\eta=(\eta_1,\eta_2) $ and  $\mathcal{E}(x,t,s)$ obeys
\beq\label{eq:14}
\mathcal{E}(x,t,s)=O\bigl((|x|+|t|)^2s^2+(|x|+|t|)|s|^3\bigr).
\eeq
An additional change of variables in $t$
allows us to assume $\partial_t\partial_{\eta_1}^2\phi(\mathbf{0},\mathbf{e}_2)=1$.
Furthermore, for any given $N$ being sufficiently large, we may also assume the uniform bound of the higher order derivatives of the phase functions, analytically,
\beq \label{eq:191}
|\partial_\eta^\beta \partial_z^\alpha\phi(z,\eta)|\leqslant 1/2, \; 0\leqslant |\beta|\leqslant N, |\alpha|=2,\; \text{for all}\; (z,\eta) \in {\rm supp}\;a.
\eeq
Otherwise, \eqref{eq:191} can be guaranteed by replacing $\phi(z,\eta)$ by $A\phi(z/A,\eta)$ with $A>0$ being sufficiently large depending on $N$ and $\phi$ along with its derivatives evaluated on the support of $a$. For more details, see Beltran, Hickman, Sogge \cite{BelHicSog18P}, Lee \cite{Lee-JFA}, and the previous works of Bourgain \cite{Bo} and H\"ormander \cite{Ho}.

\vskip0.2cm
 Let $\Upsilon_{x,t}:\eta\rightarrow \partial_{x}\phi(x,t,\eta)$.
If $\varepsilon_0$ is taken sufficiently small,
$\Upsilon_{x,t}$ is a local diffeomorphism on $B(\mathbf{e}_2,\varepsilon_0)$.
If we denote by $\Psi_{x,t}(\xi)=\Upsilon_{x,t}^{-1}(\xi)$  the inverse map of $\Upsilon_{x,t}$,
then clearly
\beq \label{eq:44}
\partial_x \phi(x,t, \Psi_{x,t}(\xi))=\xi.
\eeq
Differentiating  \eqref{eq:44} with respect to $\xi$ on both sides yields
\beq \label{eq:50}
[\partial^2_{x,\eta}\phi](x,t,\Psi_{x,t}(\xi))\;\partial_{\xi}
\Psi_{x,t}(\xi)={\rm Id}.
\eeq
This manifests that
\begin{equation}
\label{eq:mmmm}
{\rm det}\,\partial_{\xi}\Psi_{x,t}(\xi)\neq 0,\;\forall(x,t)\in B(\mathbf{0},\varepsilon_0),\;\forall \xi\in\Upsilon_{x,t}(B(\mathbf{e}_2,\varepsilon_0)).
\end{equation}
Let
\beq
\label{q} q(x,t,\xi)=\partial_t\phi(x,t,\Psi_{x,t}(\xi)).
\eeq
Then, we have
\beq\label{eq:10}
\partial_t \phi(x,t,\eta)
=q(x,t,\partial_x\phi(x,t,\eta)).
\eeq
\subsection{A bilinear estimate for oscillatory integrals }
For $j=1,2$, we consider two oscillatory integral operators
\beq
W_{\lambda}^j  f(z)=\int e^{i\lambda \phi_j (z,\eta)}a_j(z,\eta)f(\eta)\,\dif\eta,\quad z=(x,t)\in \R^{2}\times \R,
\eeq
where $\phi_j$ satisfies $\mathbf{H}_1,\mathbf{H}_2$ and $a_j\in C_c^\infty(\R^3\times\R^2)$.
We shall use
the following bilinear oscillatory integral estimates  established in \cite{Lee-JFA},
which is an extension to the variable coefficient case of the bilinear adjoint restriction theorem of Wolff \cite{Wolff}.
Specifically, for each multi-index $\beta \in \mathbb{N}^3$,  assuming that
\beq
|\partial_z^\beta \phi_j (z, \eta)|\leqslant A_\beta, \; \text{for}\; 0\leqslant |\beta|\leqslant N, j=1,2,\; \text{for all}\; (z,\eta) \in {\rm supp}\;a,
\eeq
we have the following theorem.
\begin{theorem}[\cite{Lee-JFA}]
	\label{theo2}
	Let $\phi_j$ with $j=1,2$ be smooth homogeneous functions of degree $1$ with respect to $\eta$,
	which satisfies conditions $\mathbf{H}_1$ $\mathbf{H}_2$.
	Assume that $\partial_x\phi_j\neq 0$ on the support of $a_j$,
	which is small enough such that $\phi_j$ satisfies \eqref{eq:10}.
	Suppose that
	\beq
	{\rm rank}\;\partial_{\eta\eta}^2q_j(x,t,\partial_x\phi_j(x,t,\eta^{(j)}))=1
	\eeq
	on ${\rm supp} \;a_j$ and
	\beq\label{eq:11}
	\Bigl|\Bigl\langle\frac{\partial_x\phi_j(z,\eta^{(j)})}{|\partial_x\phi_j(z,\eta^{(j)})|},
	\partial_\eta q_1\bigl(z,\partial_x\phi_1(z,\eta^{(1)})\bigr)-\partial_\eta q_2\bigl(z,\partial_x\phi_2(z,\eta^{(2)})\bigr)
	\Bigr\rangle\Bigr|\geqslant c_0>0
	\eeq
	for $j=1,2$, whenever $(z,\eta^{(1)})\in {\rm supp}\;a_1$ and $(z,\eta^{(2)})\in {\rm supp}\;a_2$.
	Then for every $ p\geqslant \frac{5}{3}$,
	one has the bilinear estimate
	\beq \label{eq:36}
	\|W_\lambda^1 f W_\lambda^2 g\|_{L^p(\R^{2+1})}\leqslant C(A_\beta, \varepsilon, c_0)\lambda^{-\frac{3}{p}+\varepsilon}\|f\|_{L^2(\R^2)}\|g\|_{L^2(\R^2)}.
	\eeq
	for a finite number of $\beta$.
\end{theorem}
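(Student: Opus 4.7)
My plan is to follow the blueprint of Wolff's bilinear cone restriction theorem, suitably modified for the variable coefficient phase. After applying the normalization of Section 2.1 (so that the $\phi_j$ behave like perturbations of the flat cone phase with the rank-one curvature of $q_j$ dictating the ``cone-like'' geometry of the wave front set), I would decompose each $W^j_\lambda f_j$ into wave packets at the natural scale $\lambda^{-1/2}$ in frequency, which corresponds to tubes of dimensions $\lambda^{-1/2}\times\lambda^{-1/2}\times 1$ in physical space, oriented along the null directions determined by $\partial_\eta q_j$. Because the phases are homogeneous of degree one in $\eta$ and the operators localize $f_j$ to $\{|\eta|\approx 1\}$ after scaling, the wave packets are essentially of the same geometric type as in Wolff's setting, modulo curvature of the tubes at scales below $1$, which is negligible for the estimate.

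The core of the proof is an induction on the scale $\lambda$. Fix $p = 5/3 + \varepsilon$ and assume inductively that a bound of the form $\|W^1_{\lambda'}fW^2_{\lambda'}g\|_p \leqslant C(\lambda')^{-3/p+\varepsilon}\|f\|_2\|g\|_2$ holds at all scales $\lambda' \leqslant \lambda/2$. Localize physical space to cubes $Q$ of side length $\lambda^{1-\delta}$ for some small $\delta$; on each such cube the variable coefficient operators are close to constant coefficient ones and parabolic rescaling applies with acceptable distortion of the phase, reducing the estimate at scale $\lambda$ to scale $\lambda^{1-\delta}$, which we sum via the spatial localization and the inductive hypothesis. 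Inside each $Q$ we split into two regimes: a broad regime where wave packets from $f$ and $g$ are transverse at the point of interaction, and a narrow regime where they concentrate in a tube of smaller angular scale. In the broad regime, the separation condition \eqref{eq:11} together with the rank-one curvature of $q_j$ from $\mathbf{H}_2$ yields a genuine bilinear $L^2$ estimate with gain $\lambda^{-1}$, by stationary phase analysis of the kernel of $W^2_\lambda(W^1_\lambda)^*$. In the narrow regime, one pigeonholes the contributing wave packets and uses the variable coefficient Kakeya bound (an analogue of Theorem \ref{kky} in the appropriate null-direction formulation) to control the geometric overlap.

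The glue between the broad/narrow dichotomy and the induction on scales is a Tao-type ``broad-narrow'' pigeonholing that converts a bilinear $L^p$ norm into a mixed norm involving $L^2$ orthogonality and tube combinatorics, and then interpolates with the trivial $L^\infty$ bound $\|W^1_\lambda f W^2_\lambda g\|_\infty \lesssim \lambda^2 \|f\|_2\|g\|_2$ (by Cauchy--Schwarz in $\eta$). The induction closes because the loss from broad case plus narrow case, compounded through the rescaling, is exactly absorbed by the $\lambda^\varepsilon$ factor, as in Wolff's original argument.

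The main obstacle, which is the whole point of Lee's extension of Wolff's theorem, is that in the variable coefficient setting, the wave packets are curved tubes, and the cone associated with $q_j(z,\cdot)$ depends on $z$. Consequently, parabolic rescaling is not exact: rescaling by a factor changes the phase in a way that alters the lower-order terms in \eqref{eq:18}, and one must verify that the class of normalized phases (satisfying $\mathbf{H}_1$, $\mathbf{H}_2$, transversality, and the analytic bounds \eqref{eq:191}) is stable under this rescaling. This forces one to allow a bounded family of phases in the induction hypothesis and to carefully track how the constants $A_\beta$ and $c_0$ deteriorate after each rescaling step; keeping these controlled uniformly in the induction is the technical heart of the argument and the reason the theorem is stated with dependence on only finitely many $A_\beta$.
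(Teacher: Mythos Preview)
The paper does not prove this theorem: it is stated as a result of Lee \cite{Lee-JFA} and used as a black box in Section~\ref{sect4}. There is therefore no ``paper's own proof'' to compare your proposal against.

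That said, your outline is a fair high-level description of how Lee's argument actually goes --- induction on scales, wave packet decomposition at scale $\lambda^{-1/2}$, a broad/narrow (or rather, a Wolff-type combinatorial) dichotomy, and careful tracking of the phase class under parabolic rescaling so that the induction closes over a bounded family of normalized phases. The point you flag at the end, that the variable-coefficient tubes are curved and that one must verify stability of the phase class (conditions $\mathbf{H}_1$, $\mathbf{H}_2$, the separation \eqref{eq:11}, and the derivative bounds \eqref{eq:191}) under rescaling, is indeed the technical crux of Lee's extension of Wolff's theorem. If you were to write this out in full you would be reproducing \cite{Lee-JFA}, not the present paper.
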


It remains to prove \eqref{eq:333'}. To this end, we will employ in the following context the bilinear approach of \cite{Tao-Vargas-II,GaSe09} and  stability property  which enables us to approximate the oscillatory integral operator by extension operator at an appropriate small scale.

\section{ Reduction  to bilinear square-function estimate}\label{sect3}
We will convert \eqref{eq:333'} to its bilinear equivalent version which will be then  established based on bilinear oscillatory estimate. One direction of the implication is a straightforward application of the H\"older's inequality, the reverse direction is delicate.
\begin{proposition}
	\label{Pro3}
	Assume $\Omega, \Omega'$ are two sets consisting of $\nu$ and $\nu'$ respectively, which  satisfy  the following angular separation condition:
	\beq \label{eq:181}
	\operatorname{Ang}(\theta_\nu, \theta_{\nu'})\approx_{\varepsilon_0}1, \;\; \text{for every pair} \; (\nu, \nu')\in \Omega\times\Omega',
	\eeq
	where $\operatorname{Ang}(\theta_\nu, \theta_{\nu'})$ measures the angle between $\theta_\nu, \theta_{\nu'}$. Let
	$T^\nu_\lambda$ be the operator defined in \eqref{eq:T}, with $\phi$ satisfying $\mathbf{H}_1, \mathbf{H}_2$ and  $a\in C_c^\infty(\R^3\times\R^2)$.
	If
	\begin{align}
	&\Bigl\|\sum_{\nu \in \Omega} T_{\lambda}^{\nu}g\sum_{\nu'\in \Omega'} T_{\lambda}^{\nu'}h\Bigr\|_{L^{5/3}(\R^{2+1})}\nonumber\\
	\lessapprox& \lambda^{1/10}\Bigl\|\Big(\sum_{\nu\in \Omega}|T_{\lambda}^{\nu}g|^2\Big)^{\frac12}\Bigr\|_{L^{10/3}(\R^{2+1})}
\Bigl\|\Big(\sum_{\nu'\in \Omega}|T_{\lambda}^{\nu' }h|^2\Big)^{\frac12}\Bigr\|_{L^{10/3} (\R^{2+1})},\label{eq:w5d}
	\end{align}
	holds, up to a ${\rm RapDec}(\lambda)$ term, for all functions $g, h$, where the  implicit constant depends on the constant $c_0$ appearing in \eqref{eq:11} and $A_\beta$ for finite many $\beta$'s, then  we have \eqref{eq:333'}.
\end{proposition}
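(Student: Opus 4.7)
My plan is to follow the bilinear-to-linear reduction scheme of Tao and Vargas \cite{Tao-Vargas-II}, adapted to the variable coefficient setting by a parabolic rescaling argument. First I would square the target and expand:
\[
\|T_\lambda f\|_{L^{10/3}(\R^3)}^2 = \Bigl\|\sum_{\nu, \nu'} T_\lambda^\nu f \cdot T_\lambda^{\nu'} f\Bigr\|_{L^{5/3}(\R^3)},
\]
and then perform a Whitney decomposition of the index set $\{(\nu,\nu')\}$: for each dyadic $k = 0,1,\dots,O(\log\lambda)$, parent sectors $\tau,\tau'$ of angular width $\approx 2^{-k}$ at angular distance $\approx 2^{-k}$ collect the pairs $(\nu,\nu')$ assigned to scale $k$. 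The fully diagonal contribution (pairs with $\operatorname{Ang}(\theta_\nu,\theta_{\nu'}) \lesssim \lambda^{-1/2}$) is absorbed directly into $\sum_\nu |T_\lambda^\nu f|^2$, which is trivially bounded by the square function.

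For each Whitney pair $(\tau,\tau')$ at scale $k$, I would apply a parabolic rescaling in $\eta$ and a corresponding anisotropic rescaling in $z$ that sends $\tau \cup \tau'$ to unit angular width. Thanks to the normalized form \eqref{eq:18} and the uniform derivative bound \eqref{eq:191}, the rescaled operator is again of the same form but with parameter $\lambda' := \lambda 2^{-2k}$; its phase satisfies $\mathbf{H}_1$, $\mathbf{H}_2$ with constants independent of $k$, and its two parent sectors are now separated by angle $\approx 1$ so that the transversality \eqref{eq:11} holds uniformly. Hence the bilinear hypothesis \eqref{eq:w5d} applies at parameter $\lambda'$, producing a factor $(\lambda')^{1/10} = \lambda^{1/10}\,2^{-k/5}$.

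After applying \eqref{eq:w5d} on each Whitney pair and undoing the rescaling, I would sum over pairs $(\tau,\tau')$ at fixed $k$: since each $\tau$ belongs to only $O(1)$ Whitney pairs and the parent sectors are essentially disjoint, Cauchy--Schwarz in $\tau$ together with the pointwise inequality $\sum_{\nu \subset \tau} |T_\lambda^\nu f|^2 \leqslant \sum_\nu |T_\lambda^\nu f|^2$ recombine the restricted square functions into the full one. Summing over the $O(\log \lambda)$ values of $k$, the gain $2^{-k/5}$ forms a convergent geometric series, yielding
\[
\|T_\lambda f\|_{L^{10/3}}^2 \lessapprox \lambda^{1/10} \Bigl\|\Bigl(\sum_\nu |T_\lambda^\nu f|^2\Bigr)^{1/2}\Bigr\|_{L^{10/3}}^2,
\]
and taking square roots recovers \eqref{eq:333'}.

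The main obstacle will be the variable coefficient parabolic rescaling: one must verify that for every Whitney pair $(\tau,\tau')$ the rescaled phase indeed satisfies $\mathbf{H}_1$, $\mathbf{H}_2$, the normalization \eqref{eq:18}, and the transversality \eqref{eq:11} with constants independent of $k$. The normalization ensures that the quadratic part of $\phi$ is scale-invariant, while the cubic remainder $\eta_2 \mathcal{E}$ shrinks favorably under the rescaling and stays subordinate. A secondary technical point is bookkeeping the logarithmic loss from summing over $k$ against the $\lambda^\varepsilon$ implicit in the $\lessapprox$ notation, so that the combined loss is absorbed into a single $\lambda^\varepsilon$ factor.
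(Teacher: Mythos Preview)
Your overall architecture---Whitney decomposition of the pairs $(\nu,\nu')$, parabolic rescaling of each Whitney pair to unit angular separation, invocation of the bilinear hypothesis at the rescaled parameter $\lambda' = \lambda 2^{-2k}$, then summation in $k$---is exactly the route the paper takes. The identification of the rescaling as the main technical point is also correct, and the paper carries it out through the explicit changes of variables \eqref{eq:Q} and \eqref{eq:122}.

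However, there is a genuine gap in your recombination step at fixed scale $k$. After applying \eqref{eq:w5d} to each Whitney pair $(\tau,\tau')$ you obtain bounds for the individual norms $\|T_\lambda^\tau f \cdot T_\lambda^{\tau'} f\|_{5/3}$, but you still need to control $\bigl\|\sum_{(\tau,\tau')\sim k} T_\lambda^\tau f \cdot T_\lambda^{\tau'} f\bigr\|_{5/3}$. The triangle inequality gives only an $\ell^1$ sum, and after Cauchy--Schwarz this produces $\sum_\tau \bigl\|(\sum_{\nu\subset\tau}|T_\lambda^\nu f|^2)^{1/2}\bigr\|_{10/3}^2$. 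Since $10/3 > 2$, Minkowski's inequality runs the \emph{wrong way}: this $\ell^2 L^{10/3}$ expression dominates, rather than is dominated by, the target $\bigl\|(\sum_\nu|T_\lambda^\nu f|^2)^{1/2}\bigr\|_{10/3}^2$. Your pointwise inequality $\sum_{\nu\subset\tau}|T_\lambda^\nu f|^2 \leqslant \sum_\nu |T_\lambda^\nu f|^2$ only yields a bound with an extra factor $\approx 2^k$, which swamps the gain $2^{-k/5}$ and makes the sum in $k$ diverge.

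What is missing is the almost-orthogonality argument (Step~1 in the paper's proof): after localising in $x$ to scale $2^{-j}$, the spatial Fourier transform of each bilinear product $T_\lambda^{j,l} f \cdot T_\lambda^{j,l'} f$ is concentrated on a rectangle $\mathcal{R}^{\mu,j}_{\lambda,l}$, and as $(l,l')$ ranges over the Whitney pairs at scale $j$ these rectangles have $O(\lambda^\varepsilon)$ overlap. This upgrades the $\ell^1$ bound to
\[
\Bigl\|\sum_{(l,l')\in\Lambda_j} T_\lambda^{j,l} f \cdot T_\lambda^{j,l'} f\Bigr\|_{5/3}
\lessapprox
\Bigl(\sum_{(l,l')\in\Lambda_j} \bigl\|T_\lambda^{j,l} f \cdot T_\lambda^{j,l'} f\bigr\|_{5/3}^{5/3}\Bigr)^{3/5}.
\]
With this $\ell^{5/3}$ decoupling in hand, Cauchy--Schwarz and the pointwise embedding $\sum_\tau a_\tau^{5/3} \leqslant (\sum_\tau a_\tau)^{5/3}$ (applied to $a_\tau = \sum_{\nu\subset\tau}|T_\lambda^\nu f|^2$) do correctly recombine the restricted square functions into the full one. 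This Fourier-support step is present already in the constant-coefficient Tao--Vargas scheme you cite, so it is not a new obstacle of the variable-coefficient setting, but it cannot be omitted.
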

\begin{proof}
	We first perform a Whitney type decomposition of the product sectors with respect to dyadic scales between $\lambda^{-1/2}$
	and $1$,  and then single out the contribution of bilinear forms  corresponding to  pairs of the generated  subsectors
	for the off-diagonal part at each individual dyadic level by an orthogonality argument. In the second step, we use parabolic rescaling to reduce  the bilinear forms at each dyadic scale to the  situation in the statement of the proposition.
	\subsubsection*{Step 1. Orthogonality argument  }
	Let $j_0$ be the largest integer with
	$2^{j_0}\leqslant \lambda^{1/2}$.
	For each $j\in\Z$ with $|\log_2\varepsilon_0|\leqslant j\leqslant j_0$,
	we denote by $\{\theta_{j,l}\}_l$ a collection of sectors  of scale  $\approx2^{-j}$,
	where $l\in\{1,2,\ldots,l_j\}$
	for some $l_j\approx 2^j $, with the property  that for each $l$,
	$\theta_{j,l}$ is a union of $\approx2^{-j}\lambda^{1/2}$
	many of consecutive $\theta_{\nu}$'s as introduced in
	Section \ref{sect2}.
	Take a Whitney-type decomposition of
	$\mathcal{C}({\mathbf{e}_2},\varepsilon_0)\times \mathcal{C}({\mathbf{e}_2},\varepsilon_0)$
	\begin{multline*}
	\mathcal{C}({\mathbf{e}_2}, \varepsilon_0)\times \mathcal{C}({\mathbf{e}_2},
	\varepsilon_0)
	\\
	=\Bigl(\bigcup_{j:\varepsilon_0^{-1}\leqslant 2^j\leqslant \varepsilon_0^3 2^{j_0}}
	\bigcup_{{\rm Ang}(\theta_{j,l}, \theta_{j,{l'}})\approx 2^{-j} }\theta_{j,l}\times \theta_{j,{l'}}\Bigr)\cup
	\Bigl(\bigcup_{{\rm Ang}(\theta_{{j_0},l}, \theta_{{j_0},{l'}})
		\leqslant \varepsilon_0^{-3} 2^{-j_0} }\theta_{{j_0},l}\times \theta_{{j_0},{l'}}\Bigr),
	\end{multline*}
	where ${\rm  Ang  }(\theta_{j,l}, \theta_{j,{l'}})\approx 2^{-j}$ means
	$2^{-(j+1)}\leqslant{\rm  Ang}(\theta_{j,l}, \theta_{j,{l'}})\leqslant 2^{-(j-1)}$.
	For $1\leqslant l \leqslant l_j$, we put
	$\chi_{j,l}(\eta)=\sum\limits_{\nu:\;\theta_{\nu}\subset \theta_{j,l}}\chi_\nu(\eta),$
	and  set
	$a^{j,l}(x,t,\eta)=\chi_{j,l}(\eta)a(x,t,\eta).$
	Define
	\beq
	\label{klkl}
	(T_{\lambda}^{j,l}f)(x,t)=\int e^{i \lambda\phi(x,t,\eta)}{a}^{j,l}(x,t,\eta) f(\eta)\,\dif\eta.
	\eeq
	Correspondingly, we have
	\begin{align}
	\label{kkkddkk}
	\left((T_{\lambda}f)(x,t)\right)^2
	=&\sum_{(l,l')\in\Lambda_{j_0}}(T_{\lambda}^{j_0,l}f)(x,t)(T_{\lambda}^{j_0,l'}f)(x,t)\\
	&+\sum_{j: \; \varepsilon_0^{-1}\leqslant 2^j\leqslant\varepsilon_0^3 2^{j_0}}
	\sum_{(l,l')\in\Lambda_j}(T_{\lambda}^{j,l}f)(x,t)(T_{\lambda}^{j,l'}f)(x,t),
	\label{eq:off-diag}
	\end{align}
	where $\Lambda_j$ denotes the collection of $(l,l')$
	indicating the sectors satisfying
	${\rm Ang}(\theta_{j,l}, \theta_{j,{l'}})\approx 2^{-j}$ for $\varepsilon_0^{-1}\leqslant 2^j\leqslant \varepsilon_0^32^{j_0}$,
	and
	$\Lambda_{j_0}$ denotes the collection of $(l,l')$
	indexing the sectors satisfying
	${\rm Ang}(\theta_{{j_0},l}, \theta_{{j_0},{l'}})\leqslant \varepsilon_0^{-3} 2^{-j_0}$.
	Therefore, by Minkowski's inequality
	\begin{align}
	\label{aakkkddkk}
	\|T_\lambda f\|^2_{L^{10/3}_{x,t}}=
	\bigl\|(T_{\lambda}f)^2\bigr\|_{L^{5/3}_{x,t}}
	\leqslant&\Bigl\|\sum_{(l,l')\in\Lambda_{j_0}}(T_{\lambda}^{j_0,l}f)(T_{\lambda}^{j_0,l'}f)\Bigr\|_{L^{5/3}_{x,t}}\\
	&+\sum_{j: \;\varepsilon_0^{-1}\leqslant 2^{j}\leqslant \varepsilon_0^32^{j_0}}\Bigl\| \sum_{(l,l')\in\Lambda_j}(T_{\lambda}^{j,l}f)(T_{\lambda}^{j,l'}f)\Bigr\|_{L^{5/3}_{x,t}}.\label{eq:off}
	\end{align}	
	
	For the first term  \eqref{aakkkddkk},  since each $\theta_{j_0,l}$ involves finitely many $\theta_\nu$'s, by Schur's test we have
	\[
	\Bigl\|\sum_{(l,l')\in\Lambda_{j_0}}(T_{\lambda}^{j_0,l}f)(T_{\lambda}^{j_0,l'}f)\Bigr\|_{L^{5/3}(\R_{x,t}^{2+1})}
	\lesssim \Bigl\|\bigl(\sum_{\nu}|T_{\lambda}^{\nu}(f)|^2\bigr)^{1/2}\Bigr\|_{L^{10/3}(\R^{2+1}_{x,t})}^2.
	\]
	
	For \eqref{eq:off},  we will use the assumption \eqref{eq:w5d}.
	To this end,  we will exploit the orthogonality property to prove, up to a ${\rm RapDec}(\lambda)$ term
	\begin{equation}
	\label{aaldsd}
	\Bigl\| \sum_{(l,l')\in\Lambda_j}(T_{\lambda}^{j,l}f)(T_{\lambda}^{j,l'}f)\Bigr\|_{L^{5/3}(\R^{2+1}_{x,t})}
	\lessapprox
	\Big(
	\sum_{(l,l')\in\Lambda_j}
	\Bigl\| (T_{\lambda}^{j,l}f)(T_{\lambda}^{j,l'}f)
	\Bigr\|^{5/3}_{L^{5/3}(\R^{2+1}_{x,t})}
	\Big)^{3/5},
	\end{equation}

	The proof is essentially from \cite{Lee-JFA}. For the sake of self-containedness, we sketch it below.
	Let $\psi(x)$ be a smooth function with its support contained in $B(0,\varepsilon_0) $ such that if we set $\psi^{\mu}_{j}(x):=\psi(2^jx-\mu)$, then we have
	\beq
	\sum_{\mu \in \varepsilon_0\mathbb{Z}^2 } \psi_{j}^{\mu}(x)\equiv 1.
	\eeq
	Set $\Phi(x,t,\eta,\eta')=
	\phi(x,t,\eta)+\phi(x,t,\eta')$ and define
	\begin{align*}
	&\mathcal{P}^{\mu,j}_{\lambda,l,l'}(\xi,t,\eta,\eta')=\int e^{i\lambda\Phi(x,t,\eta,\eta')-i x\cdot\xi}
	A_{l,l'}^{\lambda,j}(x,t,\eta,\eta')\psi^{\mu}_{j}(x)\,\dif x,\\
	&\mathcal{A}^{\mu,j}_{\lambda,l, l'}(f)(x,t)=
	\psi^{\mu}_{j}(x)\iint e^{i\lambda\Phi(x,t,\eta,\eta')}
	A_{l,l'}^{\lambda,j}(x,t,\eta,\eta')
	f (\eta)f(\eta')\,\dif\eta \dif\eta',
	\end{align*}
	where
$$A_{l,l'}^{\lambda,j}(x,t,\eta,\eta')=a^{j,l}_\lambda(x,t,\eta)a^{j,l'}_\lambda(x,t,\eta').$$
	Denoting by
	$r=|\eta|$, $r'=|\eta'|$ and by
	$\kappa_{j}^{l},\kappa_{j}^{l'}$ the center of $\theta_{j,l}$ and $\theta_{j,l'}$ respectively,
	in view of the support of $\psi^{\mu}_{j}(x)$ and the cone condition,
	we have
	$$	\partial_{x}(\lambda\Phi(x,t,\eta,\eta')-x\xi)=\lambda\partial_x\Phi( 2^{-j}\mu,t,r\kappa_{j}^l,r'\kappa_{j}^{l'})-\xi+O(\lambda2^{-j}). $$
		It follows from integration by parts that for
	$\angle(\eta,\kappa_{j}^{l})\leqslant \varepsilon_0 2^{-j}$ and $\angle(\eta',\kappa_{j}^{l'})\leqslant \varepsilon_0 2^{-j}$
	with $(l,l')\in\Lambda_j$,
	the function $\xi\to \mathcal{P}^{\mu,j}_{\lambda,l,l'}(\xi,t,\eta,\eta')$ decreases fast outside the rectangle
	$$
	\mathcal{R}_{\lambda,l}^{\mu,j}(t):=
	\Big\{\xi\in \R^2:|\xi-\lambda r\partial_x\phi(2^{-j}\mu,t,\kappa_{j}^{l})|\lessapprox\lambda2^{-j}, r\in (2-2\varepsilon_0, 2+2\varepsilon_0)\Big\}.
	$$
	We define a smooth bump function $R^{\mu,j}_{\lambda,l}(\xi,t)$ associated  with $\lambda^{\varepsilon}\mathcal{R}^{\mu,j}_{\lambda,l}(t)$,
	then, we have
	\beq\label{eq:37}
	\Bigl\|({\rm Id}-R^{\mu,j}_{\lambda,l}(D_x,t))\mathcal{A}_{\lambda,l,l'}^{\mu,j}(f)(t,\cdot)\Bigr\|_{L^{10/3}(\R^2_x)}\lesssim {\rm RapDec}(\lambda)\|f\|_{L^{10/3}(\R^2)}^2.
	\eeq
	Since the $\xi$-support of $ R^{\mu,j}_{\lambda,l} (\xi,t)$ is contained in $\lambda^{\varepsilon}\mathcal{R}^{\mu,j}_{\lambda,l}$, by the nondegeneracy condition,  we know  for fixed  $\lambda^{\varepsilon}\mathcal{R}^{\mu,j}_{\lambda,l}$, there are at most $\approx\lambda^{\varepsilon}$ such sets  intersecting  with it with nonempty content.
	By using this finite-overlapping  property  we may conclude up to a fast decay term
	\begin{equation}
	\label{aaldsd2}
	\Bigl\| \sum_{(l,l')\in\Lambda_j}(T_{\lambda}^{j,l}f)(T_{\lambda}^{j,l'}f)\Bigr\|_{L^{\frac53}(\R^{2+1}_{x,t})}
	\lessapprox
	\Big(\sum_{(l,l')\in\Lambda_j}
	\Bigl\| (T_{\lambda}^{j,l}f)(T_{\lambda}^{j,l'}f)
	\Bigr\|^{\frac53}_{L^{\frac53}(\R^{2+1}_{x,t})}
	\Big)^{\frac35}.
	\end{equation}
	
	Going back to \eqref{aaldsd},
	we are reduced  to show, up to a negligible term,
	\begin{align}
		&\Bigl\|
	(T_{\lambda}^{j,l}f)
	(T_{\lambda}^{j,l'}f)
	\Bigr\|_{L^{5/3}(\R_{x,t}^{2+1})}\nonumber\\
	\lessapprox& (\lambda 2^{-2j})^{\frac{1}{10}}
	\Bigl\|
	\Bigl(\sum_{\nu:\theta_\nu \subset \theta_{j,l}}
	|T_{\lambda}^{\nu}(f)|^2
	\Bigr)^{\frac12}
	\Bigr\|_{L^{\frac{10}{3}}(\R_{x,t}^{2+1})}
	\Bigl\|
	\Bigl(\sum_{\nu':\theta_{\nu'}\subset  \theta_{j,l'}}
	|T_{\lambda}^{\nu'}(f)|^2\Bigr)^{\frac12}
	\Bigr\|_{L^{\frac{10}{3}}(\R^{2+1}_{x,t})}.
	\label{aamkdsc}
	\end{align}
	Indeed, substituting \eqref{aamkdsc}
	to \eqref{aaldsd}, we obtain, up to a ${\rm RapDec}(\lambda)$ term, by using Cauchy-Schwarz's inequality and $\ell^{2}\hookrightarrow\ell^{10/3}$
	\[
	\Bigl\| \sum_{(l,l')\in\Lambda_j}
	(T_{\lambda}^{j,l}f)
	(T_{\lambda}^{j,l'}f)
	\Bigr\|_{L^{\frac53}(\R_{x,t}^{2+1})}
	\lessapprox (\lambda2^{-2j})^{\frac1{10}}
	\Bigl\|\Big(\sum_{\nu}|T_{\lambda}^{\nu}(f)|^2\Big)^{\frac12}\Bigr\|_{L^{\frac{10}3}(\R_{x,t}^{2+1})}^2.
	\]
	Summing over $j$, we obtain \eqref{eq:333'}.

	\subsubsection*{ Step 2. Parabolic rescaling }
	In this step, we derive \eqref{aamkdsc} from  \eqref{eq:w5d}, where the angular separation condition can be verified via parabolic rescaling.
	
	Let $\eta^{j,l}$ be the center of $\theta_{j,l}$ and set $\alpha_{j}^l=\eta^{j,l}_1/\eta^{j,l}_2$.
	Then for every  $\eta \in \theta_{j,l}$, we have $$\left|\frac{\eta_1}{\eta_2}-\alpha_{j}^l \right|\leqslant 2^{-j}.$$
	By change of variable $\eta_1\rightarrow \alpha_j^l  \eta_2+\eta_1$, we may rotate the center of $\theta_{j,l}$ to the   $\mathbf{e}_2$ axis.  Correspondingly, we will perform the change of variable in the space variable $x$.
	\begin{equation}
	\label{eq:Q}
	\left\{\begin{aligned}
	x_1+\alpha_{j}^lt &=x_1^{(1)}\\
	\alpha_j^l x_1+x_2+\frac{1}{2}(\alpha_{j}^l)^2 t&=x_2^{(1)}\\
	t&= t^{(1)}
	\end{aligned}\right.
	\end{equation}
	For convenience, we will use $x^{(1)}$ to denote $x^{(1)}:=(x_1^{(1)}, x_2^{(1)})$. Since  the transformation above is a diffeomorphism, we use
	$\Phi^{(1)}(x^{(1)}, t^{(1)})$ to denote the inverse map of \eqref{eq:Q}.
	Under  the new variable system, the phase function $\phi$ is transformed to
	\beq \label{eq:121}
	\phi^{(1)}(x^{(1)}, t^{(1)}, \eta)=\langle x^{(1)},\eta\rangle+\frac{1}{2}t^{(1)} \frac{\eta_1^2}{\eta_2}
+\eta_2\mathcal{E}_1\Big( x^{(1)}, t^{(1)}, \frac{\eta_1}{\eta_2}+\alpha_{j}^l\Big)
	\eeq
	where
$$ \mathcal{E}_1\Big(x^{(1)}, t^{(1)}, \frac{\eta_1}{\eta_2}+\alpha_{j}^l\Big)=\mathcal{E}\Big( \Phi^{(1)}(x^{(1)}, t^{(1)}), \frac{\eta_1}{\eta_2}+\alpha_{j}^l\Big).$$
	Next, making  Taylor's expansion of $\mathcal{E}_1(x^{(1)},t^{(1)},  \eta_1/\eta_2+\alpha_{j}^l )$ as follows
	\begin{align*}
	\mathcal{E}_1(x^{(1)}, t^{(1)} ,  \eta_1/\eta_2+\alpha_{j}^l)&=\mathcal{E}_{1}(x^{(1)},t^{(1)} , \alpha_{j}^l )+\partial_s\mathcal{E}_{1}(x^{(1)},t^{(1)} , \alpha_{j}^l)\frac{\eta_1}{\eta_2}\\
	&+\frac{1}{2}\partial_s^2\mathcal{E}_1(x^{(1)},t^{(1)},  \alpha_{j}^l )\big(\frac{\eta_1}{\eta_2}\big)^2\\
	&+\frac{1}{2}\int_0^1 \partial_s^3\mathcal{E}_{1}(x^{(1)},t^{(1)}, s\frac{\eta_1}{\eta_2}+\alpha_{j}^l )\big(\frac{\eta_1}{\eta_2}\big)^3(1-s)^2 ds
	\end{align*}
	Making the change of variables
	\begin{equation}
	\label{eq:122}
	\left\{
	\begin{aligned}
	x_1^{(1)}+\partial_{s}\mathcal{E}_1(x^{(1)}, t^{(1)},   \alpha_{j}^l)&=x_1^{(2)}\\
	x_2^{(1)}+\mathcal{E}_1(x^{(1)}, t^{(1)}, \alpha_{j}^l)&=x_2^{(2)}\\
	\frac{1}{2}t^{(1)}+\frac{1}{2}\partial_{s}^2\mathcal{E}_{1}(x^{(1)}, t^{(1)}, \alpha_{j}^l )&= \frac{1}{2} t^{(2)}
	\end{aligned}\right.
	\end{equation}
	where as above, we use   $\Phi^{(2)}(x^{(2)}, t^{(2)})$ to denote the inverse map of \eqref{eq:122}, the phase function of \eqref{eq:121} is changed to
	\begin{align}\label{eq:19}
	\phi^{(2)}(x^{(2)}, t^{(2)}, \eta)&=\langle x^{(2)},\eta\rangle+\frac{1}{2} t^{(2)}\eta_1^2/\eta_2+\mathcal{E}_{2}\big(x^{(2)}, t^{(2)},\eta_1/\eta_2\big),
	\end{align}
	where
	\beq \label{eq:171}
	\mathcal{E}_2\Bigl(x^{(2)}, t^{(2)}, \frac{\eta_1}{\eta_2}\Bigr)=\frac{1}{2}\int_0^1 \partial_s^3 \mathcal{E}\Bigl(\Phi^{(1)}\circ\Phi^{(2)}(x^{(2)}, t^{(2)}), s\frac{\eta_1}{\eta_2}+\alpha_j^l\Bigr)\big(\frac{\eta_1}{\eta_2}\big)^3(1-s)^2 \dif s,
	\eeq
	Finally,  by scaling $\eta_1\rightarrow 2^{-j}\eta_1$, the corresponding map for the amplitude becomes
 \begin{align*}
a_2^{j,l}(x^{(2)},t^{(2)}, \eta)=&a(\Phi^{(1)}\circ\Phi^{(2)}(x^{(2)}, t^{(2)}),2^{-j}\eta_1+\alpha_j^l \eta_2, \eta_2)\chi_{j,l}(\alpha_j^l\eta_2+2^{-j}\eta_1,\eta_2),\\
a_2^{j,l,l'}(x^{(2)},t^{(2)}, \eta)=&a(\Phi^{(1)}\circ\Phi^{(2)}(x^{(2)}, t^{(2)}),2^{-j}\eta_1+\alpha_j^l \eta_2, \eta_2)\chi_{j,l'}(\alpha_j^l\eta_2+2^{-j}\eta_1,\eta_2).
\end{align*}
whose support in $\eta-$variable are  contained respectively in
\begin{equation*}\left\{\begin{aligned}
&D_1=\big\{\eta: |\eta_1/\eta_2|\leqslant \varepsilon_0/4, |\eta_2-1|\leqslant \varepsilon_0\big\},\\
&D_2=\big\{\eta': |\eta_1'/\eta_2'-\alpha_j^{l.l'}|\leqslant \varepsilon_0/4, |\eta_2'-1|\leqslant \varepsilon_0\big\},
\end{aligned}\right.\end{equation*}
where
	$
	\alpha_{j}^{l,l'}=2^j(\eta^{j,l'}_1/\eta^{j,l'}_2-\eta^{j,l}_1/\eta^{j,l}_2)
	$
	and it is easy to find  $|\alpha^{l,l'}_{j}|\geqslant \varepsilon_0$ and  $|\alpha^{l,l'}_{j}|$ does not depend $j,l,l'$. Thus to prove \eqref{aamkdsc}, it suffices to show
	\begin{align}
			&\|\mathcal{T}_{\lambda}^{j,l}f\mathcal{T}_{\lambda }^{j,l,l'}f\|_{L^{5/3}(\R^3)}\\
	\lesssim& (\lambda 2^{-2j})^{1/10}
	\Bigl\|\Bigl(\sum_{\nu:\theta_\nu\subset \theta_{j,l}}|\mathcal{T}_{\lambda}^{\nu}f|^2\Bigr)^{1/2}\Bigr\|_{L^{10/3}(\R^3)}
	\Bigl\|\Bigl(\sum_{\nu':\theta_{\nu'}\subset \theta_{j,l'}}|\mathcal{T}_{\lambda}^{\nu'}f|^2\Bigr)^{1/2}\Bigr\|_{L^{10/3}(\R^3)},
	\label{eq:201}
	\end{align}
	where
	\begin{align*}
	&\mathcal{T}_\lambda^{j,l}f=\sum_{\nu:\theta_{\nu}\subset\theta_{j,l}}\mathcal{T}^\nu_\lambda f,\;\mathcal{T}_\lambda^{j,l,l'}f=\sum_{\nu:\theta_{\nu'}\subset\theta_{j,l'}}\mathcal{T}^{\nu'}_\lambda f,\\
	&\mathcal{T}_{\lambda}^{j,l}f(x^{(2)},t^{(2)})=\int
	e^{i\lambda\phi^{(3)}(x^{(2)},t^{(2)},\eta)}
	a_2^{j,l}(x^{(2)},t^{(2)},\eta)f(\eta)\,\dif \eta,\\
	&\mathcal{T}_{\lambda}^{j,l,l'}f(x^{(2)},t^{(2)})=\int e^{i\lambda \phi^{(3)}(x^{(2)},t^{(2)},\eta)}a_2^{j,l,l'}(x^{(2)},t^{(2)},\eta)f(\eta)\dif \eta,\\
	&\mathcal{T}_{\lambda}^{\nu}f(x^{(2)},t^{(2)})=\int
	e^{i\lambda\phi^{(3)}(x^{(2)},t^{(2)},\eta)}
	a_2^{\nu}(x^{(2)},t^{(2)},\eta)f(\eta)\,\dif \eta,\\
	&a_2^{\nu}(x^{(2)},t^{(2)}, \eta)=a(\Phi^{(1)}\circ\Phi^{(2)}(x^{(2)}, t^{(2)}),2^{-j}\eta_1+\alpha_j^l \eta_2, \eta_2)\chi_{\nu}(\alpha_j^l\eta_2+2^{-j}\eta_1,\eta_2),
	\end{align*}
	and the  phase function reads
	\beq
	\phi^{(3)}(x^{(2)},t^{(2)}, \eta)= 2^{-j}x_1^{(2)}\eta_1+ x_2^{(2)}\eta_2+2^{-2j}t^{(2)}
 \eta_1^2(2\eta_2)^{-1}+ \eta_2\mathcal{E}_{2} \big(x^{(2)},t^{(2)},2^{-j}\eta_1/\eta_2\big).
	\eeq
	
	Let $\{R_{\mu}\}_{\mu}$ be a collection of rectangles of sidelength $\sim 2^{-j}\times 2^{-2j}$ which forms a  partition of
a ball $ B(0, \varepsilon_0)\subset \R^2$.  For each $\mu$, we use $x_{\mu}$ to denote the center of $R_{\mu}$ and  $R_{\mu}^{\varepsilon_0}$ to denote the region $R_{\mu}\times (-\varepsilon_0,\varepsilon_0)$.  For $B(0,\varepsilon_0)\subset \mathbb R^3$, we have
	\beq
	B_{\varepsilon_0}:=B(0,\varepsilon_0)\subset \bigcup_{\mu} R_{\mu}^{\varepsilon_0}.
	\eeq
		Therefore it suffices to show for each  $R_\mu^{\varepsilon_0}$
	\begin{align}
	\nonumber
	&\|\mathcal{T}_{\lambda }^{j,l}f\mathcal{T}_{\lambda }^{j,l,l'}f\|_{L^{5/3}(R_{\mu}^{\varepsilon_0})}\\
	\lesssim &(\lambda 2^{-2j})^{\frac{1}{10}}
	\Bigl\|\Big(\sum_{\nu:\theta_{\nu}\subset \theta_{j,l}}|\mathcal{T}_{\lambda}^{\nu}f|^2\Big)^{\frac12}\Bigr\|_{L^{\frac{10}{3}}(w_{R_{\mu}^{\varepsilon_0}})}\Bigl\|\Big(\sum_{\nu':\theta_{\nu}\subset \theta_{j,l'}}|\mathcal{T}_{\lambda}^{\nu'}f|^2\Big)^{\frac12}\Bigr\|_{L^{\frac{10}{3}}(w_{R_{\mu}^{\varepsilon_0}})},
	\label{eq:66}
	\end{align}
	where the implicit constant is uniform with respect to $\mu$.
Note that
	\beq
	\sum_{\mu} w_{R_{\mu}^{\varepsilon_0}}=w_{B_{\varepsilon_0}},
	\eeq
	then \eqref{eq:201} will follow by squaring both sides of \eqref{eq:66} and Cauchy-Schwarz inequality after summing over all $\mu$'s,
	
	By changing   variables $ x^{(2)}\rightarrow x_\mu+(2^{-j} \tilde{x}_1, 2^{-2j}\tilde{x}_2), t\rightarrow \tilde t$, it suffices to show
	
\begin{align}\nonumber
		& \|\tilde {\mathcal{T}}_{\lambda 2^{-2j}}^{j,l}f \tilde {\mathcal{T}}_{\lambda 2^{-2j}}^{j,l,l'}f\|_{L^{\frac53}(B_{\varepsilon_0})}\\
\lesssim&  (\lambda 2^{-2j})^{\frac{1}{10}}\Big\|(\sum_{\nu:\theta_\nu\subset \theta_{j,l}}|\tilde{\mathcal{T}}_{\lambda 2^{-2j}}^{\nu}f|^2)^{\frac12}\Big\|_{L^{\frac{10}3}(w_{B_{\varepsilon_0}})}\Big\|(\sum_{\nu':\theta_{\nu'}\subset\theta_{j,l'}}|\tilde{\mathcal{T}}_{\lambda 2^{-2j}}^{\nu'}f|^2)^{\frac12}\Big\|_{L^{\frac{10}3}(w_{B_{\varepsilon_0}})},\label{eq:61}
		\end{align}
	 where the phase function in $\tilde {\mathcal{T}}_{\lambda  2^{-2j}}^{j,l}f $
	and $\tilde {\mathcal{T}}_{\lambda  2^{-2j}}^{j,l,l'}f $ reads
	\beq\label{eq:20}
	\tilde{\phi}^{j}(\tilde x, \tilde t,\eta)=\tilde x\eta+\frac{1}{2}\tilde t \eta_1^2/\eta_2 +2^{2j}\eta_2\mathcal{E}_2(x_{\mu}+(2^{-j}\tilde{x}_1,2^{-2j} \tilde{x}_2),t, 2^{-j}\eta_1/\eta_2),
	\eeq
	along with the amplitudes
	\begin{align*}
	&\tilde{a}^{j,l}(\tilde x, \tilde t, \eta )=a_2^{j,l}(x_\mu+(2^{-j} \tilde{x}_1, 2^{-2j}\tilde{x}_2), \tilde t,\eta),\\
	&\tilde{a}^{j,l,l'}(\tilde x, \tilde t, \eta )=a_2^{j,l,l'}(x_\mu+(2^{-j} \tilde{x}_1, 2^{-2j}\tilde{x}_2), \tilde t,\eta).
	\end{align*}
		The constant appearing in \eqref{eq:11} is independent of $j$  which will be clarified later and the finite many of $A_\beta$'s  is uniformly bounded since the error term $\mathcal{E}_{2}$ converges to
	\beq
	\tilde{\phi}_\infty= x\eta+\frac{t}{2}\frac{\eta_1^2}{\eta_2}
	\eeq
	in the sense that
	\beq
	\|\partial_{z}^\beta \tilde{\phi}^j -\partial_z^\beta\phi_\infty\|_{L^\infty}\rightarrow 0, \;  \text{as }\; j\rightarrow \infty,|\beta|\leq N,
	\eeq
	which can be seen from formula \eqref{eq:171}. Using  \eqref{eq:w5d}  with $\lambda$ replaced by $\lambda 2^{-2j}$ we will obtain \eqref{eq:61}.
\end{proof}

\section{Use of the bilinear oscillatory integral estimate}\label{sect4}
In this section, we use the bilinear oscillatory integral estimates to prove \eqref{eq:w5d} and this will complete the proof of \eqref{eq:333'}
due to Proposition \ref{Pro3}.  Let us  define the rescaled function $\phi^\lambda$ and $a_\lambda$ as follows
\beq
\phi^{\lambda}(z,\eta):=\lambda\phi\Bigl(\frac{z}{\lambda},\eta\Bigr),\; \;\;\; a_{\lambda}(z,\eta):=a\Bigl(\frac{z}{\lambda},\eta\Bigr).
\eeq
Correspondingly, we define $\mathscr{T}_{\lambda}$ as
\begin{equation}
\label{mmkmmm}
(\mathscr{T}_\lambda f)(z):=(T_\lambda f)(z/\lambda )=\int e^{i\phi^\lambda(x,t,\eta)}a_\lambda(x,t,\eta) f(\eta)\,\dif\eta.
\end{equation}
We make angular decomposition as before to write $\mathscr{T}_\lambda f$ as
\beq
\mathscr{T}_\lambda f(z)=\sum_{\nu}\mathscr{T}_\lambda^\nu f(z),\;\mathscr{T}_\lambda^\nu f(z)=\int e^{i\phi^\lambda(x,t,\eta)}a_\lambda^\nu(x,t,\eta)f(\eta)\dif\eta,
\eeq
where
the associated amplitude  $a_\lambda^\nu$ for $\mathscr{T}_\lambda^\nu$ is to be $a_\lambda^\nu(z,\eta)=a_\lambda(z,\eta) \chi_\nu(\eta)$.
We rewrite  \eqref{eq:w5d} in the scaled version as follows
\begin{proposition}\label{pro1}
	Let $(\Omega,\Omega')$  satisfy the same angular separation condition in the sense of \eqref{eq:181}. Up to a negligible term, then we have
	\begin{align}\nonumber
	&	\Bigl\|\sum_{\nu \in \Omega}\mathscr{T}_{\lambda}^{\nu}f\sum_{\nu' \in \Omega'}\mathscr{T}_{\lambda}^{\nu'}f\Bigr\|_{L^{\frac53}(\R^{2+1})}\\
	\lessapprox& \lambda^{\frac1{10}}
	\Bigl\|\Big(\sum_{\nu\in \Omega}|\mathscr{T}_{\lambda}^{\nu}f|^2\Big)^{\frac12}\Bigr\|_{L^{\frac{10}3} (\R^{2+1})}
	\Bigl\|\Big(\sum_{\nu' \in \Omega'}|\mathscr{T}_{\lambda}^{\nu'}f|^2\Big)^{\frac12}\Bigr\|_{L^{\frac{10}3} (\R^{2+1})}
	\label{eq:21}\end{align}
\end{proposition}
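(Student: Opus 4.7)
The plan is to combine Lee's bilinear oscillatory integral estimate (Theorem~\ref{theo2}) with the Tao--Vargas bilinear framework via a wave--packet / locally constant argument, splitting the argument into three steps.

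First, I would introduce an additional \emph{radial} decomposition of each angular sector: each $\theta_\nu$ (of angular width $\lambda^{-1/2}$) is cut along the radial direction into plates $\tau$ of radial width $\lambda^{-1/2}$, so that each $\tau$ is essentially a $\lambda^{-1/2}\times\lambda^{-1/2}$ rectangle in the frequency plane and $\mathscr{T}_\lambda^\nu f = \sum_{\tau\subset\theta_\nu}\mathscr{T}_\lambda^\tau f$. By the uncertainty principle the modulus $|\mathscr{T}_\lambda^\tau f|$ is essentially constant on physical--space balls of radius $\lambda^{1/2}$; this \emph{locally constant property} will be the bridge that converts the $L^2\times L^2$ input on the right--hand side of Lee's estimate into square--function input in $L^{10/3}\times L^{10/3}$.

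Second, for each pair $(\tau_1,\tau_2)$ with $\tau_1\subset\theta_\nu$, $\tau_2\subset\theta_{\nu'}$ and $(\nu,\nu')\in\Omega\times\Omega'$, the angular--separation hypothesis \eqref{eq:181} together with the cone condition $\mathbf{H}_2$ and the identity \eqref{eq:10} yields the transversality condition \eqref{eq:11} of Theorem~\ref{theo2} with a constant $c_0>0$ uniform in $\lambda,\nu,\nu'$. A routine normalization of the phase (as described in Section~\ref{sect2}) then puts the operator in the form required by \cite{Lee-JFA}, and Theorem~\ref{theo2} at the exponent $p=5/3$ delivers, modulo a $\mathrm{RapDec}(\lambda)$ tail,
\[
\bigl\|\mathscr{T}_\lambda^{\tau_1}f\cdot\mathscr{T}_\lambda^{\tau_2}f\bigr\|_{L^{5/3}(\R^{2+1})}
\lessapprox\lambda^{-9/5}\,\|f|_{\tau_1}\|_{L^2(\R^2)}\,\|f|_{\tau_2}\|_{L^2(\R^2)}.
\]

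Third, these plate--level bilinear estimates must be reassembled into the global bilinear square--function inequality \eqref{eq:21}. Expanding $\sum_{\nu\in\Omega}\mathscr{T}_\lambda^\nu f=\sum_\tau\mathscr{T}_\lambda^\tau f$, summing over plate pairs, and using the locally constant property on $\lambda^{1/2}$--balls together with a Bernstein--type reverse--H\"older inequality at the plate scale should convert the $L^2$ contributions into $L^{10/3}$--square--function contributions, producing the factor $\lambda^{1/10}$ after accounting for the $\sim\lambda^{1/2}$ plates contained in each sector. The main obstacle lies precisely in this assembly: the radial plate decomposition introduced in Step~1 does not a priori match the purely angular decomposition that appears inside the square functions on the right of \eqref{eq:21}. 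I plan to overcome this in Section~\ref{sect5} by adapting the stability argument of \cite{BelHicSog18P}, approximating each $\mathscr{T}_\lambda^\tau$ at the plate scale by the associated constant--coefficient extension operator (using that $\tilde\phi^j\to\tilde\phi_\infty$ as noted after \eqref{eq:20}), and exploiting the stability of Lee--type bilinear estimates and of square--function norms under such phase perturbations, so that the plate--level estimates combine compatibly with the angular square functions on the right--hand side of \eqref{eq:21}.
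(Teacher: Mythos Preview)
Your plan has the right ingredients---the radial plate decomposition, the locally constant property, Lee's bilinear estimate, and the stability argument for collapsing the radial index---but applies them in an order that cannot close.

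The gap is in Steps~2--3. You apply Theorem~\ref{theo2} to each plate pair $(\tau_1,\tau_2)$ separately and then propose to ``sum over plate pairs''. Once the bilinear gain has been spent plate-wise, however, no Bernstein or reverse--H\"older device can recover the $\ell^2$ structure demanded by the square functions on the right of \eqref{eq:21}: each of $\Omega,\Omega'$ contains on the order of $\lambda$ plates (roughly $\lambda^{1/2}$ sectors times $\lambda^{1/2}$ radial pieces), and the triangle inequality over the $\sim\lambda^2$ pairs forfeits far more than the $\lambda^{-9/5}$ you gained. The paper's argument reverses the order. The locally constant property (Lemma~\ref{le3}) is invoked \emph{first}: on each spatial cube $Q_k$ of side $R\approx\lambda^{1/2}$ one replaces $\mathscr{T}_{\lambda,k}^{\nu,j}f(z)$ by $e^{i\phi^\lambda(z,\eta_k^{\nu,j})}c^{\nu,j}$ with scalar coefficients $c^{\nu,j}$, and then applies Lee's theorem \emph{once} to the full discretized bilinear sum (Proposition~\ref{pro2}). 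The $L^2\times L^2$ input of Theorem~\ref{theo2} is then the function $\sum_{\nu,j}c^{\nu,j}b_k^{\nu,j}(\mathbf{0},\cdot)$, whose $L^2$ norm is $\lesssim R\bigl(\sum_{\nu,j}|c^{\nu,j}|^2\bigr)^{1/2}$ by disjointness of the plate supports---this is exactly what produces the $\ell^2$ output on the coefficients. Only after this global bilinear step does one collapse the radial index $j$ via the approximation/stability argument (your anticipation of that last step is correct). So the repair is not in Step~3 alone: Step~2 must be replaced by a single application of Lee's estimate to the discretized sum, with the locally constant reduction performed beforehand.
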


\begin{definition}[locally constant property]
	For $n\geqslant 1$, given a function $F: \R^n \rightarrow [0,\infty)$, we say $F$ satisfies the
	\emph{ locally constant  property} at scale of $\rho$ if $F(x)\approx F(y)$ whenever $|x-y|\leqslant C_0 \rho$. Here the implicit constant in ``\,$\approx$" could depend on the structure constant $C_0$.
\end{definition}
Let $E$ be  the extension operator given by
\beq
Ef(x,t):=\int_{\R^2} e^{i(x\eta+t h(\eta))} a_2(\eta) f(\eta) \dif \eta,
\eeq
where  $h(\eta)$ is a smooth function away from the origin and homogeneous of degree $1$ with
\begin{equation}
{\rm rank}\;\partial^2_{\eta\eta}{h}=1,\quad \text{for all } \;\; \eta\in {\rm supp}\;a_2.
\end{equation}

For $r\geqslant 1$,  if $f$ is supported in a  $r^{-1}$ neighborhood of $\eta_0 \in {\rm supp} \;a_2$, then ${\rm supp} \;\widehat{Ef}$ is contained  in a ball of radius $r^{-1}$, by uncertainty principle, one may  view $|Ef|$ essentially as a constant at the scale of $r$. However, this is not the case for the oscillatory  operator $\mathscr{T}_{\lambda}$, since  $\widehat{\mathscr{T}_\lambda f}$ is not necessarily compactly supported. One may nevertheless, up to  phase rotation and a negligible term, recover the locally constant property.
\begin{lemma}[\cite{GHI}]\label{le3}
	Let $\mathscr{T}_{\lambda}$ be given by \eqref{mmkmmm}. There exists a smooth, rapidly decreasing function $\varrho:\R^3\to[0,\infty)$ with the following property: ${\rm supp}\; \hat{\varrho} \subset B(0,1)$ such that  if $\e>0$ and $1\leqslant  r \leqslant\lambda^{1-\e}$,
	$f$ is supported in a $r^{-1}$-cube centered at $\bar{\eta}$, then
	\beq\label{eq:47}
	e^{- i \phi^\lambda(z,\bar{\eta})} \mathscr{T}_{\lambda}f(z)=\Bigl(\bigl[e^{- i \phi^{\lambda}(\cdot,\bar{\eta})}
	\mathscr{T}_{\lambda}f(\cdot)\bigr]\ast \varrho_r\Bigr)(z)+{\rm RapDec}(\lambda)\|f\|_{L^{10/3}(\R^2)}
	\eeq
	holds for all $z\in\R^3$, where $\varrho_r(z)=r^{-3}\varrho(z/r)$.
\end{lemma}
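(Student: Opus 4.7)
The approach is to show that after stripping off the phase $e^{i\phi^{\lambda}(\cdot,\bar\eta)}$, the function
\[
g(z):=e^{-i\phi^{\lambda}(z,\bar\eta)}\mathscr{T}_{\lambda}f(z)=\int e^{i[\phi^{\lambda}(z,\eta)-\phi^{\lambda}(z,\bar\eta)]}a_{\lambda}(z,\eta)f(\eta)\,\mathrm{d}\eta
\]
has Fourier transform essentially concentrated on $\{\,|\xi|\lesssim 1/r\,\}$. Once this is established, the identity \eqref{eq:47} follows by Fourier inversion: pick $\varrho$ with $\widehat{\varrho}\in C_c^{\infty}(B(0,1))$ equal to $1$ on a neighbourhood of the origin compatible with the constants in the integration-by-parts estimate below (which is possible thanks to the normalization \eqref{eq:191}), and write
\[
g(z)-(g*\varrho_{r})(z)=(2\pi)^{-3}\int e^{iz\cdot\xi}\bigl[1-\widehat{\varrho}(r\xi)\bigr]\widehat{g}(\xi)\,\mathrm{d}\xi.
\]

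The frequency localization of $g$ will be verified via a standard nonstationary phase estimate. After the rescaling $\tilde{z}=z/\lambda$, which exhibits $\lambda$ as the large parameter, $\widehat{g}(\xi)$ becomes
\[
\lambda^{3}\!\int\!\!\int e^{i\lambda\Psi_{\eta}(\tilde{z};\xi)}a(\tilde{z},\eta)f(\eta)\,\mathrm{d}\eta\,\mathrm{d}\tilde{z},\qquad \Psi_{\eta}(\tilde{z};\xi):=\phi(\tilde{z},\eta)-\phi(\tilde{z},\bar\eta)-\tilde{z}\cdot\xi.
\]
Since $|\eta-\bar\eta|\leqslant 1/r$ on $\operatorname{supp} f$, the mean value theorem yields $|\nabla_{\tilde{z}}\phi(\tilde{z},\eta)-\nabla_{\tilde{z}}\phi(\tilde{z},\bar\eta)|\leqslant \|\partial_{\eta}\partial_{z}\phi\|_{L^{\infty}}/r$, which by \eqref{eq:191} is $\leqslant |\xi|/2$ whenever $|\xi|\geqslant C/r$ for a fixed constant $C$. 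Hence $|\nabla_{\tilde{z}}\Psi_{\eta}|\geqslant |\xi|/2$ on that region, while the higher $\tilde{z}$-derivatives of $\Psi_{\eta}$ remain uniformly bounded by \eqref{eq:191}. Performing $N$ nonstationary-phase integrations by parts in $\tilde{z}$ therefore gives, uniformly in $\eta$,
\[
|\widehat{g}(\xi)|\leqslant C_{N}\lambda^{3-N}|\xi|^{-N}\|f\|_{L^{1}(\mathbb{R}^{2})},\qquad |\xi|\geqslant C/r.
\]

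To conclude, control $\|f\|_{L^{1}}\lesssim r^{-7/5}\|f\|_{L^{10/3}}$ by H\"older (using $|\operatorname{supp} f|\lesssim r^{-2}$) and integrate the pointwise bound on $\widehat{g}$ in $\xi$ over the region $|\xi|\geqslant C/r$. Together with $r\leqslant \lambda^{1-\varepsilon}$ this yields
\[
\bigl|g(z)-(g*\varrho_{r})(z)\bigr|\leqslant C_{N}\lambda^{-7/5-\varepsilon(N-22/5)}\|f\|_{L^{10/3}(\mathbb{R}^{2})},
\]
which is the desired ${\rm RapDec}(\lambda)\|f\|_{L^{10/3}}$ bound once $N$ is chosen large enough.

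The only subtle point in this plan is matching the threshold constant $C$ produced by the integration-by-parts argument with the required support condition $\operatorname{supp}\widehat{\varrho}\subset B(0,1)$; this is handled by the normalization \eqref{eq:191}, which allows the phase derivatives to be shrunk at will via $\phi(z,\eta)\mapsto A\phi(z/A,\eta)$ with $A$ sufficiently large, reducing $C$ below the scale on which $\widehat{\varrho}$ lives.
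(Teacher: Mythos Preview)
Your argument is correct and is exactly the standard nonstationary-phase route; the paper itself does not supply a proof but simply attributes the lemma to \cite{GHI}, whose proof proceeds along the same lines. So there is nothing to compare at the level of strategy.

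One small correction to your closing remark. You write that the normalisation $\phi(z,\eta)\mapsto A\phi(z/A,\eta)$ from \eqref{eq:191} can be used to shrink the threshold constant $C$ coming from the mean-value step $|\nabla_{z}\phi(\cdot,\eta)-\nabla_{z}\phi(\cdot,\bar\eta)|\leqslant \|\partial_{\eta}\partial_{z}\phi\|_{\infty}/r$. That is not so: under $z\mapsto z/A$ the mixed derivative $\partial_{\eta}\partial_{z}\phi$ is unchanged (only $\partial_{z}^{\alpha}\phi$ with $|\alpha|\geqslant 2$ shrink), which is precisely why \eqref{eq:191} is stated only for $|\alpha|=2$. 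Fortunately no rescaling is needed. From the normal form \eqref{eq:18} one has $\partial_{\eta}\partial_{x}\phi=I+O(\varepsilon_{0})$ and $\partial_{\eta}\partial_{t}\phi=O(\varepsilon_{0})$, so $\|\partial_{\eta}\partial_{z}\phi\|\leqslant 1+O(\varepsilon_{0})$; combined with $|\eta-\bar\eta|\leqslant \sqrt{2}/(2r)$ for the $r^{-1}$-cube centred at $\bar\eta$, the mean-value bound is $\leqslant c/r$ with $c<1$ once $\varepsilon_{0}$ is small. Hence one may choose $\widehat{\varrho}\equiv 1$ on $B(0,c)$ and $\operatorname{supp}\widehat{\varrho}\subset B(0,1)$, and the integration-by-parts estimate applies on the full support of $1-\widehat{\varrho}(r\,\cdot)$. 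With this adjustment your proof goes through as written.
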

\begin{remark}
	We further choose $\varrho$  to satisfy the locally constant property at the scale of {\bf 1}.  Consequently, one may view  $\varrho_r$
	as a constant at scale of $r$.
\end{remark}

To prove Proposition \ref{pro1},  we need further decompose the support of
$\eta\to a_\lambda^\nu(z,\eta)$ in the radial direction to obtain equally-spaced pieces so that we may exploit the locally constant property.
Let $\rho\in C_{c}^{\infty}(\R^2) $   satisfy
\beq\label{eq:ooo}
\sum_{j\in\Z^2}\rho(\eta-j)\equiv 1,\quad \eta\in\R^2.
\eeq

Let $\e>0$ be small and
$\mathbf{Q}=\{Q_k\}_k$ be a mesh of cubes of sidelenghth $ \lambda^{1/2-\e}$,
which are centered at
lattices belong to  $\lambda^{1/2-\varepsilon}\mathbb{Z}^{2+1}$
with sides parallel to the axis and form a tiling of ${\rm supp}_z a(\cdot, \eta)$.
For each $Q_k \in\mathbf{Q}$, let $z_k$ be
the center of $Q_k$
and set
\begin{align*}
\mathscr{T}_{\lambda,\,k}^{\nu,\,j}f(z)
=&\int e^{i\phi^\lambda (z,\eta)}a_{\lambda,k}^{\nu,j}(z,\eta) f(\eta)\,\dif\eta,
\\
a_{\lambda,k}^{\nu,j}(z,\eta)
=&a_{\lambda}^{\nu}(z,\eta)\rho\bigl(\lambda^{1/2-\varepsilon/2}\partial_{x}\phi^\lambda(z_k,\eta)-j\bigr).
\end{align*}
The support of $\eta\to a_{\lambda,k}^{\nu,j}(z,\cdot)$
is  contained in a cube of sidelength $\approx\lambda^{-1/2+\varepsilon/2}$ which is denoted by $\mathcal{D}^{\nu,j}_k$.
Ultimately, we may write
\beq \label{eq:62}
\mathscr{T}_{\lambda}f(z)\Big|_{z\in Q_k}=\sum_{\nu ,j} \mathscr{T}_{\lambda,k}^{\nu,j}f(z),
\quad
\mathscr{T}_{\lambda}^\nu f(z)\Big|_{z\in Q_k}=\sum_{j} \mathscr{T}_{\lambda,k}^{\nu,j}f(z) .
\eeq
Let $\eta^{\nu,j}_k$ be the center of $\mathcal{D}^{\nu,j}_{k}$ and  fix  $R=\lambda^{1/2-\varepsilon/2}$  in what follows.
The key ingredient  in the proof of Proposition \ref{pro1} is the following discrete version of  bilinear estimate.
\begin{proposition}\label{pro2}
	Let $Q_k \in\mathbf{Q}$ be defined as above and $(\nu,\nu')\in\Omega\times\Omega'$
	satisfy the angular
	separation condition \eqref{eq:181}.
	Then, we have
	\begin{equation}
	\label{eq:27}
	\Bigl\|\sum_{\nu,j}e^{i\phi^\lambda(z,\eta_k^{\nu,j})}c^{\nu,j}\sum_{\nu',j'}
	e^{i\phi^\lambda(z,\eta_k^{\nu',j'})}c^{\nu',j'}\Bigr\|_{L^{5/3}(Q_k)}
	\lessapprox \lambda\; \Bigl(\sum_{\nu,j}|c^{\nu,j}|^2\Bigr)^{\frac{1}{2}}
	\Bigl(\sum_{\nu',j'}|c^{\nu',j'}|^2\Bigr)^{\frac{1}{2}}
	\end{equation}
	
\end{proposition}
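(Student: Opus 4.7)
The plan is to interpret the discrete bilinear exponential sum in \eqref{eq:27} as a product of oscillatory integrals $\mathscr{T}_\lambda g_1\cdot\mathscr{T}_\lambda g_2$ restricted to $Q_k$, for carefully chosen wave-packet test functions $g_1,g_2$, and then invoke Lee's bilinear estimate Theorem~\ref{theo2}.

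\textbf{Wave-packet construction.} Setting $\sigma=R^{-1}=\lambda^{-1/2+\varepsilon/2}$ and fixing a Schwartz bump $\chi\in C_c^\infty(\R^2)$ with $\int\chi=1$, I define
\[
\rho_{\nu,j}(\eta):=\sigma^{-2}\chi\bigl(\sigma^{-1}(\eta-\eta_k^{\nu,j})\bigr)\exp\!\bigl(-i[\phi^\lambda(z_k,\eta)-\phi^\lambda(z_k,\eta_k^{\nu,j})]\bigr),
\]
and set $g_1=\sum_{\nu\in\Omega,j}c^{\nu,j}\rho_{\nu,j}$, $g_2=\sum_{\nu'\in\Omega',j'}c^{\nu',j'}\rho_{\nu',j'}$. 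The phase modulation is chosen to cancel the principal $\eta$-oscillation of $\phi^\lambda(\cdot,\eta)$ inside each $\mathcal D_k^{\nu,j}$ as viewed from $z_k$. Substituting $\eta=\eta_k^{\nu,j}+\sigma\xi$ and Taylor-expanding $\phi^\lambda(z,\eta)-\phi^\lambda(z_k,\eta)$ in $\xi$, the linear remainder is controlled by $|\partial^2_{z\eta}\phi^\lambda|\cdot|z-z_k|\cdot\sigma\lesssim\lambda^{1/2-\varepsilon}\cdot\sigma=\lambda^{-\varepsilon/2}$; the quadratic-in-$\xi$ term, which a priori carries the unfavorable factor $|\partial^2_{\eta\eta}\phi^\lambda|=O(\lambda)$, gains a $z$-derivative and shrinks to $O(\sigma^2|z-z_k|)=O(\lambda^{-1/2})$ on $Q_k$. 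Consequently, on $Q_k$,
\[
\mathscr{T}_\lambda\rho_{\nu,j}(z)=a_\lambda(z_k,\eta_k^{\nu,j})\,e^{i\phi^\lambda(z,\eta_k^{\nu,j})}+\mathrm{RapDec}(\lambda),
\]
and since $a_\lambda$ is bounded above and below on its support (and may be absorbed into the coefficients, preserving $\ell^2$-mass), we obtain the reproducing identity $\mathscr{T}_\lambda g_\ell(z)\approx\sum c^{\nu,j}e^{i\phi^\lambda(z,\eta_k^{\nu,j})}$ uniformly on $Q_k$. An alternative route applies Lemma~\ref{le3} to each $\rho_{\nu,j}$ (supported in an $R^{-1}$-cube) and uses $Q_k\ll R$.

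\textbf{Applying Theorem~\ref{theo2}.} The angular separation \eqref{eq:181} of the sector centers, together with the cone condition $\mathbf H_2$, yields the transversality hypothesis \eqref{eq:11} uniformly for all pairs $(\nu,\nu')\in\Omega\times\Omega'$: indeed, the map $\eta\mapsto\partial_\eta q(z,\partial_x\phi(z,\eta))$ parametrizes the normal direction on the cone with non-vanishing principal curvature, so angular separation of $\eta^{(1)},\eta^{(2)}$ transfers to a lower bound on the inner product in \eqref{eq:11}. Rescaling $\tilde z=z/\lambda$ converts $\mathscr{T}_\lambda$ into the form $W_\lambda$ of Theorem~\ref{theo2}, giving
\[
\|\mathscr{T}_\lambda g_1\cdot\mathscr{T}_\lambda g_2\|_{L^{5/3}(Q_k)}\ \le\ \lambda^{9/5}\|T_\lambda g_1\cdot T_\lambda g_2\|_{L^{5/3}(\R^3)}\ \lesssim\ \lambda^{9/5}\cdot\lambda^{-9/5+\varepsilon}\|g_1\|_{L^2}\|g_2\|_{L^2}\ =\ \lambda^\varepsilon\|g_1\|_{L^2}\|g_2\|_{L^2}.
\]

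\textbf{Conclusion and main obstacle.} The near-disjointness of the supports $\mathcal D_k^{\nu,j}$ gives $\|g_\ell\|_{L^2}^2\approx\sigma^{-2}\sum|c^{\nu,j}|^2=R^2\sum|c^{\nu,j}|^2$, and combining with $R^2=\lambda^{1-\varepsilon}$:
\[
\text{LHS of }\eqref{eq:27}\ \lessapprox\ \lambda^\varepsilon\|g_1\|_{L^2}\|g_2\|_{L^2}\ \lessapprox\ \lambda^\varepsilon R^2\Bigl(\sum|c^{\nu,j}|^2\Bigr)^{\!1/2}\Bigl(\sum|c^{\nu',j'}|^2\Bigr)^{\!1/2}\ \lessapprox\ \lambda\Bigl(\sum|c^{\nu,j}|^2\Bigr)^{\!1/2}\Bigl(\sum|c^{\nu',j'}|^2\Bigr)^{\!1/2},
\]
which is the desired bound. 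The main obstacle I expect is securing the reproducing identity uniformly in $(\nu,j,\nu',j')$: the scale $\sigma^{-1}=\lambda^{-1/2+\varepsilon/2}$ must be precisely tuned so that every Taylor remainder in the phase is small (not merely bounded) on $Q_k$; the dangerous quadratic term is tamed only because $Q_k$ is sub-critical relative to the locally-constant scale $R$. A secondary point is the transversality verification translating \eqref{eq:181} into \eqref{eq:11}, which is standard once one traces through the cone geometry encoded in $\mathbf H_2$.
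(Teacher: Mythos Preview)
Your overall strategy matches the paper's: represent the discrete exponential sum as an oscillatory integral in $\eta$ of a suitable $L^2$ function, apply Lee's bilinear Theorem~\ref{theo2}, and read off the factor $R^2=\lambda^{1-\varepsilon}$ from the $L^2$ norm of the test function. The scaling bookkeeping in your last display and the $\ell^2$-computation for $\|g_\ell\|_{L^2}$ are both correct.

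The gap is in your reproducing identity. Your own Taylor computation shows the phase remainder on $Q_k$ is $O(\lambda^{-\varepsilon/2})$, not $o(1)$ to all orders; hence
\[
\mathscr{T}_\lambda\rho_{\nu,j}(z)=e^{i\phi^\lambda(z,\eta_k^{\nu,j})}\,A_{\nu,j}(z),\qquad A_{\nu,j}(z)=a_\lambda(z_k,\eta_k^{\nu,j})+O(\lambda^{-\varepsilon/2}),
\]
with the error genuinely \emph{$z$-dependent}. This is not $\mathrm{RapDec}(\lambda)$, and it cannot be absorbed into the constants $c^{\nu,j}$. If you try to control $\bigl\|\sum c^{\nu,j}(1-A_{\nu,j}(z)/A_{\nu,j}(z_k))e^{i\phi^\lambda(z,\eta_k^{\nu,j})}\cdot(\text{second sum})\bigr\|_{L^{5/3}(Q_k)}$ by brute force (Cauchy--Schwarz over $(\nu,j)$ and $|Q_k|^{3/5}$), you get a power of $\lambda$ strictly larger than $\lambda$; and since the error coefficients depend on $z$, you cannot iterate the target inequality on the error sum either. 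The alternative route via Lemma~\ref{le3} only tells you that $e^{-i\phi^\lambda(\cdot,\eta_k^{\nu,j})}\mathscr{T}_\lambda\rho_{\nu,j}$ is essentially a convolution with $\varrho_R$, which again leaves a $z$-dependent amplitude on $Q_k$ rather than a constant.

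The paper handles exactly this point by working with an \emph{exact} representation: it sets
\[
b_k^{\nu,j}(z,\eta)=\Bigl(\int_{\mathcal D_k^{\nu,j}}e^{i[\psi^\lambda(z,\eta)-\psi^\lambda(z,\eta_k^{\nu,j})]}\,d\eta\Bigr)^{-1}\chi_{\mathcal D_k^{\nu,j}}(\eta),
\]
so that $\int e^{i\psi^\lambda(z,\eta)}b_k^{\nu,j}(z,\eta)\,d\eta=e^{i\psi^\lambda(z,\eta_k^{\nu,j})}$ identically, and records the key bounds $|b_k^{\nu,j}|\lesssim R^2$, $|\partial_z^\alpha b_k^{\nu,j}|\lesssim R^{2-|\alpha|}$ on $Q_k$. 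The $z$-dependence of the resulting bilinear amplitude $B_k(z,\eta,\eta')$ is then removed by the fundamental theorem of calculus, writing $B_k(z,\cdot)=B_k(\mathbf 0,\cdot)+\int_0^{x_1}\partial_{u_1}B_k\,du_1+\cdots$, and applying Theorem~\ref{theo2} to each frozen piece; the derivative terms are harmless because each $\partial_z$ costs $R^{-1}$ while the integration range has length $\lambda^{1/2-\varepsilon}<R$. Your construction satisfies the same derivative bounds ($|\partial_z A_{\nu,j}|\lesssim R^{-1}$), so the identical FTC device would close your argument---but as written, the claim of a $\mathrm{RapDec}(\lambda)$ error is incorrect and this step is missing.

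A secondary point: the paper applies Theorem~\ref{theo2} with phase $\psi^\lambda=\phi^\lambda-\phi^\lambda(\mathbf 0,\cdot)$ and no amplitude, rather than through $\mathscr{T}_\lambda$ with its cutoff $a_\lambda$. This avoids your assumption that $a_\lambda(z_k,\eta_k^{\nu,j})$ is bounded below, which fails near the boundary of $\operatorname{supp}a$.
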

\begin{proof}
	
	Without loss of generality, we may assume $z_k=\mathbf{0}$  and normalise the phase function by setting
	$
	\psi^{\lambda}(z,\eta)=\phi^\lambda(z,\eta)-\phi^{\lambda}(\mathbf{0},\eta).
	$
	Let
	\beq
	b_k^{\nu,j}(z,\eta)
	=\Bigl(\int_{\mathcal{D}_k^{\nu,j}}
	e^{i[\psi^{\lambda}(z,\eta)-\psi^{\lambda}(z,\eta^{\nu,j}_k)]}\,
	\dif\eta \Bigr)^{-1}\times\chi_{\mathcal{D}_k^{\nu,j}}(\eta),
	\eeq
	where $\chi_{\mathcal{D}_k^{\nu,j}}$ denotes the characteristic function of $\mathcal{D}_k^{\nu,j}$.
	It is easy to see for $\lambda$ sufficiently large\footnote{See the proof of Lemma 4.6 in \cite{MSS-jams} for a similar fact.}
	\begin{gather}
	|b_k^{\nu,j}(z,\eta)|\leqslant R^2,\quad
	|\partial^{\alpha}_z b_k^{\nu,j}(z,\eta)|\leqslant   R^{2-|\alpha|},\quad \forall z\in Q_k \label{eq:33}.
	\end{gather}
	We may evaluate the left side of
	\eqref{eq:27} by
	\begin{equation*}
	\label{eq:67}
	\Big\|\biggl(\int  e^{i\psi^{\lambda}(z,\eta)}\sum_{\nu,j}
	b_k^{\nu,j}(z,\eta)\,c^{\nu,j}\,\dif\eta\biggr)
	\biggl(\int  e^{i\psi^{\lambda}(z,\eta')}\sum_{\nu',j'}
	b_k^{\nu',j'}(z,\eta')\,c^{\nu',j'}\,\dif\eta'\biggr)\Big\|_{L^{\frac{5}{3}}(Q_k)}.
	\end{equation*}	
	Let
	\beq
	B_{k}(z,\eta,\eta')
	=\sum_{\nu,j}\sum_{\nu',j'}c^{\nu,j}\,b_k^{\nu,j}(z,\eta)\;c^{\nu',j'}\,b_k^{\nu',j'}(z,\eta').
	\eeq
	By the fundamental theorem of calculus,
	we may write
	\begin{align}
	\label{eq:29}
	B_{k}(z,\eta,\eta')
	=&B_{k}(\mathbf{0},\eta,\eta')+\int_0^{x_1}\frac{\partial B_k}{\partial u_1}((u_1,0,0),\eta,\eta')\,\dif u_1 \\
	&+\cdots+\int_{0}^{x_1}\int_0^{x_2} \int_{0}^{t}\frac{\partial^{3} B_{k}}{\partial u_1\partial u_2\partial u_{3}}(u,\eta,\eta')\,\dif u.
	\end{align}
	We take  $B_{k}(\mathbf{0},\eta,\eta')$ as an example for the other terms can be handled in a similar way.
	
	The explicit formula of $B_{k}(\mathbf{0},\eta,\eta')$ reads
	\beq
	B_{k}(\mathbf{0},\eta,\eta')=\sum_{\nu,j}c^{\nu,j}b_k^{\nu,j}(\mathbf{0},\eta)\sum_{\nu',j'}c^{\nu',j'}b_k^{\nu',j'}(\mathbf{0},\eta'),
	\eeq
	and we are led to estimating
	\begin{equation}
	\label{kkkkk}
	\Biggl\|\biggl(\int  e^{i \psi^{\lambda}(z,\eta)}
	\sum_{\nu,j}c^{\nu,j}\,b_k^{\nu,j}(\mathbf{0},\eta) \,\dif\eta \biggr)\biggl(\int e^{i \psi^{\lambda}(z,\eta')}\sum_{\nu',j'}c^{\nu',j'}\,
	b_k^{\nu',j'}(\mathbf{0},\eta')\,\dif\eta'\biggr)\Biggr\|_{L^{\frac{5}{3}}}.
	\end{equation}
	According to Theorem \ref{theo2}, \eqref{kkkkk} would be dominated  as
	\begin{align*}
	\Bigl\|\sum_{\nu,j}c^{\nu,j}b_k^{\nu,j}(\mathbf{0},\cdot)\Bigr\|_{L^2}
	\Bigl\|\sum_{\nu',j'}c^{\nu',j'}b_k^{\nu',j'}(\mathbf{0},\cdot) \Bigr\|_{L^2}
	\lessapprox C \lambda \; \Bigl(\sum_{\nu,j}|c^{\nu,j}|^2\Bigr)^{\frac12}\Bigl(\sum_{\nu',j'}|c^{\nu',j'}|^2\Bigr)^{\frac12},
	\end{align*}
	provided the phase functions $\psi^\lambda(z,\eta), \psi^\lambda(z,\eta')$ fulfill the  condition  \eqref{eq:11}.
	
	To see this is the case, and hence complete the proof of\eqref{eq:27},
	we resort to the angular separation condition \eqref{eq:181}.
	In fact,
	after changing variables $z\to \lambda z$,
	it suffices to show that if $\eta,\eta'$ satisfy  \eqref{eq:181}, then \eqref{eq:11} holds for $\psi(z,\eta)$ and $\psi(z,\eta')$  where
	\beq
	\psi(x,t,\eta)=\phi(x,t,\eta)-\phi(0,\eta),
	\eeq
	with $\phi(x,t,\eta)$ of the form \eqref{eq:18}, \emph{i.e.}
	\beq
	\phi(x,t,\eta)=\langle x,\eta\rangle+\frac{t}{2}(\eta_1^2/\eta_2)+\eta_2\mathcal{E}(x,t, \eta_1/\eta_2),
	\eeq
	where the error term $\mathcal{E}(z,s)$ satisfies
	\beq
	\mathcal{E}(z,s)=O(|(x,t)|^2s^2+(|x|+|t|)|s|^3).
	\eeq
	Furthermore,
	we may neglect $\phi(0,\eta)$
	since it is independent of $(x,t)$.
	
	To guarantee \eqref{eq:11}, we need to
	remove the dependence on $\varepsilon_0$ of  the scale of angular separation through  an additional
	angular transformation with respect to the $\eta-$variable. Without loss of generality, we may assume ${\rm supp}_\eta \;a(z,.)$  contains ${\bf e_2}$, since  the general case follows by repeating the argument  in the proof of Proposition \ref{Pro3}.
	
	By changing of variable $\eta_1\rightarrow \varepsilon_0 \eta_1$, we are reduced to the following property for $\eta, \eta'$,
	which are contained in
	\begin{align*}
	\mathscr{D}_1&=\Big\{\eta=(\eta_1,\eta_2)\in\R^2: \Big|\frac{\eta_1}{\eta_2}\Big|\leqslant \frac1{10}, |\eta_2-1|\leqslant \varepsilon_0\Big\},\\
	\mathscr{D}_2&=\Big\{\eta'=(\eta_1',\eta_2')\in\R^2: \Big|\frac{\eta_1'}{\eta_2'}-\frac12\Big|\leqslant \frac1{10}, |\eta_2'-1|\leqslant \varepsilon_0\Big\},
	\end{align*}
	respectively.
	Within the above setting, the phase function now becomes
	\beq
	\label{kjkjkjkj}
	\phi(x,t,\eta)=\varepsilon_0 x_1\eta_1+x_2\eta_2+
	\frac{t}{2}\varepsilon_0^2 \eta_1^2/\eta_2+\eta_2\mathcal{E}(x,t,\varepsilon_0\eta_1/\eta_2).
	\eeq
	
	Let $\{R_{\mu}\}_{\mu}$ be a collection of pairwise disjoint rectangles of sidelength $\varepsilon_0\times \varepsilon_0^2$, covering $B(0,\varepsilon_0)\subset \R^2$.
	For each $\mu$, we use $x_{\mu}$ to denote the center of $R_{\mu}$.
	For simplicity, we use $R_{\mu}^{\varepsilon_0}$ to denote the rectangle $R_{\mu}\times (-\varepsilon_0,\varepsilon_0)$.
	
	Changing  variables $x\rightarrow x_\mu+(\varepsilon_0x_1, \varepsilon_0^2x_2)$, replacing $\lambda$ with $\varepsilon_0^2\lambda$, and after neglecting harmless terms,
	we have \eqref{kjkjkjkj} of the following form in new variables
	\beq\label{eq:202}
	\phi(x,t,\eta)=x\cdot\eta+\frac{t}{2}\eta_1^2/\eta_2 +\varepsilon_0^{-2}\eta_2\mathcal{E}(x_{\mu}+(\varepsilon_0x_1,\varepsilon_0^2x_2),t, \varepsilon_0\eta_1/\eta_2).
	\eeq
	Taking $(x_\mu+(\varepsilon_0 x_1,\varepsilon_0^2x_2),t) \in B(0,2\varepsilon_0)\subset\R^3$ into account,
	by direct calculation for $\varepsilon_0$ small enough, we have
	\begin{align*}\left\{\begin{aligned}
	&\nabla_{x} \phi(x,t,\eta)=\eta+O(\varepsilon_0|\eta|^2)+O(\varepsilon_0|\eta|^3),\\
	&\partial_{x,\eta}^2 \phi(x,t,\eta)={\rm Id}+O(\varepsilon_0|\eta|),\\
	&\partial_{t} \phi(x,t,\eta)=\frac{1}{2}\eta_1^2/\eta_2+O(\varepsilon_0|\eta|^2)+O(\varepsilon_0|\eta|^3).
\end{aligned}\right.
		\end{align*}
		From \eqref{eq:10}, we have
	\[
	\nabla_\eta\partial_{t}\phi(x,t,\eta)=(\nabla_\eta q)(x,t,\partial_{x}\phi(x,t,\eta))\;\partial_{x\eta}^2\phi(x,t,\eta),
	\]
	and hence
	\[
	(\nabla_\eta q)(x,t,\partial_{x}\phi(x,t,\eta))=\left(\eta_1/\eta_2,-\eta_1^2/(2\eta_2^2)\right)+O(\varepsilon_0|\eta|).
	\]
	Therefore, by choosing $\varepsilon_0$ sufficiently small, we have for
	$\eta \in \mathscr{D}_1, \eta' \in \mathscr{D}_2$
	\beq
	\text{Left hand side of} \, \,\eqref{eq:11}\approx |\eta_1/\eta_2-\eta_1'/\eta_2'|^2 +O(\varepsilon_0)\approx 1.
	\eeq
	This verifies \eqref{eq:11}.
\end{proof}

Now we turn to the  proof of Proposition \ref{pro1}.  For given $R=\lambda^{1/2-\varepsilon/2}$, $\varrho_R$ has the locally constant property at scale of $R$.
For $z\in Q_k$, we
denote by
\[
\mathscr{H}^{\nu,j}_{\lambda,k} f(z)=
e^{-i\phi^\lambda(z,\eta^{\nu,j}_k)}\mathscr{T}_{\lambda,k}^{\nu,j}f (z).\]
Based on Lemma \ref{le3} and the compactness of ${\rm supp}_\eta\, a_{\lambda,k}^{\nu,j}(z,\cdot)$ in \eqref{eq:62},
we start with proving the following estiamate
\begin{align}
\label{eq:38} &\sum_{Q_k\in\mathbf{Q}}\Big\|\sum_{\nu,j}
e^{i\phi^{\lambda}(\cdot,\eta^{\nu,j}_k)}
(\mathscr{H}^{\nu,j}_{\lambda,k} f)\ast \varrho_{R}
\sum_{\nu',j'}e^{i\phi^{\lambda}(\cdot,\eta^{\nu',j'}_k)}(\mathscr{H}^{\nu',j'}_{\lambda,k} f)\ast \varrho_{R}\Big\|_{L^{\frac53}(Q_k)}^{\frac53}\\
\lessapprox & \lambda^{\frac{1}{10}}\Big(\sum_{Q_k\in\mathbf{Q}} \Bigl\|(\sum_{\nu,j}|\mathscr{T}_{\lambda,k}^{\nu,j}f|^2)^{\frac{1}{2}}\Bigr\|_{L^{\frac{10}{3}}(w_{Q_k})}^{\frac{10}{3}}\Big)^{\frac{1}{2}}
\Big(\sum_{Q_k\in\mathbf{Q}} \Bigl\|(\sum_{\nu',j'}|\mathscr{T}_{\lambda,k}^{\nu',j'}f|^2)^{\frac{1}{2}}\Bigr\|_{L^{\frac{10}{3}}(w_{Q_k})}^{\frac{10}{3}}\Big)^{\frac{1}{2}}.\nonumber
\end{align}
Indeed, by Minkowski's inequality and
the locally constant property at scale
$R$ enjoyed by
$\varrho_{R}$, we have
\[
\Big\|\iint \varrho_{R}(z-y)
\varrho_{R}(z-y')
\Bigl|\sum_{\nu,\nu',j,j'}
e^{i \phi^\lambda(z,\eta^{\nu,j}_k)}
\mathscr{H}^{\nu,j}_{\lambda,k} f(y)
e^{i\phi^\lambda(z,\eta^{\nu',j'}_k)}\mathscr{H}^{\nu',j'}_{\lambda,k} f(y')\Bigr|\,\dif y\dif y'\Big\|_{L^{\frac53}(Q_k)}
\]
is bounded by
\begin{multline*}
\iint
\Big\| \sum_{\nu,j}
e^{i\phi^\lambda(z,\eta^{\nu,j}_k)}
\mathscr{H}^{\nu,j}_{\lambda,k} f(y)
\sum_{\nu',j'}
e^{i \phi^\lambda(z,\eta^{\nu',j'}_k)}
\mathscr{H}^{\nu',j'}_{\lambda,k} f(y')\Big\|_{L^{\frac{5}{3}}(Q_k)}\\
\times
\varrho_{R}(\bar{z}-y)
\varrho_{R}(\bar{z}-y')\,\dif y\dif y',
\end{multline*}
whenever $\bar{z}\in Q_k$.
We use Proposition \ref{pro2} to obtain the following bound of the above formula,
\begin{equation}
\label{eq:PPP}
\lambda\iint \Bigl(\sum_{\nu,j}|\mathscr{T}_{\lambda,k}^{\nu,j}f(\bar{z}-y)|^2\Bigr)^{\frac12}
\Bigl(\sum_{\nu',j'}|\mathscr{T}_{\lambda,k}^{\nu',j'}f(\bar{z}-y')|^2\Bigr)^{\frac12}
\varrho_{R}(y) \varrho_{R}(y')\,\dif y\dif y'.
\end{equation}
After
averaging over $Q_k$ in $\bar{z}-$variable,
and neglecting the RapDec$(\lambda)$ term as well as  a $\lambda^{O(\e)}$ factor, \eqref{eq:PPP} can be controlled by
\begin{align}
\nonumber\lambda^{\frac1{10}}&\int \Bigl\|\Bigl(\sum_{\nu,j}|\mathscr{T}_{\lambda,k}^{\nu,j}f(\bar{z}-y)|^2\Bigr)^{\frac12}
\Bigr\|_{L^{\frac{10}3}_{\bar{z}}(Q_k)}\varrho_{R}(y)
\dif y\\
&\times
\int \Bigl\|\Bigl(\sum_{\nu',j'}|\mathscr{T}_{\lambda,k}^{\nu',j'}f(\bar{z}-y')|^2\Bigr)^{\frac12}
\Bigr\|_{L^{\frac{10}3}_{\bar{z}}(Q_k)} \varrho_{R}(y')\,\dif y'. \label{mkdcds}
\end{align}
By H\"older's inequality, we have
\[
\int \Bigl\|
\Bigl(\sum_{\nu,j}
|\mathscr{T}_{\lambda,k}^{\nu,j}f(\bar{z}-y)|^2\Bigr)^{\frac{1}{2}}\Bigr\|_{L^{\frac{10}{3}}_{\bar{z}}(Q_k)}
\varrho_{R}(y)
\,\dif y
\lesssim
\Bigl(\int \Bigl(\sum_{\nu,j}|\mathscr{T}_{\lambda,k}^{\nu,j}f(z)|^2\Bigr)^{\frac{5}{3}}w_{Q_k}(z)\, \dif z
\Bigr)^{\frac{3}{10}},
\]
where we have used the following fact,
\[
\int_{\R^3}w_{Q_k}(z+y)\varrho_{R}(y)\,\dif y\lessapprox w_{Q_k}(z).
\]
Summing over $Q_k\in\mathbf{Q}$ and applying Cauchy-Schwarz's inequality, we obtain \eqref{eq:38}.

Assuming that
up to a ${\rm RapDec}(\lambda)-$term,
one may add up the blocks of square functions along radial directions
\beq \label{eq:39}
\Bigl\|\Big(\sum_{\nu,j}
|\mathscr{T}_{\lambda,k}^{\nu,j}f|^2\Big)^{\frac12}
\Bigr\|_{L^{10/3}(w_{Q_k})}
\lessapprox \Bigl\|\Big(\sum_{\nu}|\mathscr{T}_{\lambda}^{\nu}f|^2\Big)^{\frac12}\Bigr\|_{L^{10/3}(w_{Q_k})},
\eeq
we obtain \eqref{eq:21} and this completes the proof of Proposition \ref{pro1}
Thus, it remains to prove \eqref{eq:39} which will be achieved in the next section.

\section{Adding up  blocks  along radial directions}
This section is devoted to showing
\beq
\label{eq:5.1}
\Bigl\|\Big(\sum_{\nu,j}
|\mathscr{T}_{\lambda,k}^{\nu,j}f|^2\Big)^{\frac12}
\Bigr\|_{L^{\frac{10}{3}}(w_{Q_k})}
\lessapprox \Bigl\|\Big(\sum_{\nu}|\mathscr{T}_{\lambda}^{\nu}f|^2\Big)^{\frac12}\bigr\|_{L^{\frac{10}{3}}(w_{Q_k})}+{\rm RapDec}(\lambda)\|f\|_{L^{\frac{10}{3}}(\R^2)}.
\eeq
The main idea is to effectively approximate  $\mathscr{T}_{\lambda}$ by an extension operator $E $  at  suitable small spatial  scale.

Assume $\delta>0$  and $1\leqslant K\leqslant \lambda^{1/2-\delta}$. By taking  Taylor expansion of $\phi^{\lambda}$ around  the point $\bar z$  and changing variables: $\eta \rightarrow \Psi^\lambda(\bar z, \eta):= \Psi(\bar z/\lambda, \eta)$, we have
\beq
\mathscr{T}_{\lambda}f(z)=\int_{\R^2}e^{i(\langle z-\bar z, \partial_{ z}\phi^{\lambda}(\bar z, \Psi^\lambda(\bar z,\eta))\rangle+\varepsilon_{\lambda}^{\bar z}(z-\bar z,\eta))}a_{\lambda, \bar z}(z,\eta) f_{\bar z}(\eta)\dif \eta, \; \text{for}\; |z-\bar z|\leqslant K,
\eeq
where
\begin{align*}
& f_{\bar z}:=e^{i\phi^{\lambda}(\bar z, \Psi^\lambda(\bar z,.))}f\circ \Psi^{\lambda}(\bar z, \cdot), \\
&a_{\lambda,\bar z}(z, \eta)=a_{\lambda }(z, \Psi^{\lambda}(\bar z,\eta))|{\rm det}\,\partial_\eta \Psi^\lambda(\bar z,\eta)|,
\end{align*}
and for $|v|\leqslant K$,
\beq
\varepsilon^{\bar z}_{\lambda}(v,\eta)=\frac{1}{\lambda}\int_0^1 (1-s)\langle (\partial_{zz}^2 \phi)((\bar z+sv)/\lambda, \Psi^{\lambda}(\bar z,\eta))v,v\rangle \dif s.
\eeq
Owing to \eqref{eq:191}, we have for $\lambda\gg 1$
\beq \label{eq:64}
\sup \limits_{(v,\eta)\in B(0,K)\times {\rm supp}_\eta a_{\lambda, \bar z}} |\partial^{\beta}_{\eta}\varepsilon_{\lambda}^{\bar z}(v,\eta)|\leqslant 1, \; \text{for}\; |v|\leqslant K,
\eeq
where  $\beta \in \mathbb{N}^{2}$ and $|\beta|\leqslant N $.

 In view of  \eqref{eq:44},  we obtain
\beq
\langle z, \partial_{ z}\phi^{\lambda}(\bar z, \Psi^\lambda(\bar z,\eta)\rangle =x\eta+t h_{\bar z}(\eta),
\eeq
where $h_{\bar z}(\eta):=(\partial_t \phi^{\lambda})(\bar z, \Psi^\lambda(\bar z, \eta))$.

 Since we assume $a(z,\eta)=a_1(z)a_2(\eta)$, up to the  negligible influence of space variable, heuristically,
we may approximate $\mathscr{T}_{\lambda}$ by extension operators $E_{\bar z}$
\beq\label{eq:45}
E_{\bar z}g(z):=\int_{\R^d}e^{i(x\eta+th_{\bar z}(\eta))}a_{2,\bar z}(\eta) g(\eta)\dif \eta,
\eeq
in a sufficiently small neighborhood of $\bar z$,
where
$$a_{2,\bar z}(\eta)= a_2(\Psi^\lambda (\bar z,\eta))|\,{\rm det}\,\partial_\eta \Psi^\lambda(\bar z,\eta)|.$$
It is obvious that $h_{\bar z}(\eta)$ is homogeneous of degree $1$ and satisfying
\begin{equation}
{\rm rank}\;\partial^2_{\eta\eta}{h}_{\bar z}=1,\quad \text{for all } \;\; \eta\in {\rm supp}\; a_{2,\bar z}.
\end{equation}
Due to the compactness of the support of $a$ and \eqref{eq:50}, we may assume the nonvanishing eigenvalue of $\partial_{\eta\eta}^2h_{\bar z}(\eta)$ is comparable to $1$ independent of $\bar z$.

To show \eqref{eq:5.1}, we shall need the  following two lemmas.
\begin{lemma}
	\label{lem:A}
	Let $z_k$ be the center of $Q_k$
	and  $E_{z_k}^{\nu,j}$ be an extension operator defined by
	\beq
	E_{z_k}^{\nu,j}g(z):=\int_{\R^2}
	e^{i(\langle x,\eta\rangle+th_{z_k}(\eta))}\rho(\lambda^{1/2-\varepsilon/2}\eta-j)a_{2,z_k}^\nu(\eta) g(\eta)\dif \eta
	\eeq
	with $\rho$ satisfying \eqref{eq:ooo} and
	\begin{align*}
	a_{2,z_k}^\nu(\eta)&=a^\nu_{2}(\Psi^{\lambda}(z_k,\eta))|{\rm det}\,\partial_\eta \Psi^\lambda(z_k,\eta)|.
	\end{align*}
	Then we have
	\begin{align}\label{eq:127}
			\Big\|\Big(\sum_{\nu,j}|E_{z_k}^{\nu,j}g|^2\Big)^{\frac12}\Big\|_{L^{\frac{10}{3}}(w_{Q_0})}\lessapprox\Big\|\Big(\sum_{\nu}|E_{z_k}^\nu g|^2\Big)^{\frac12}\Big\|_{L^{\frac{10}{3}}(w_{Q_0})}
+{\rm RapDec(\lambda)}\|g\|_{L^{\frac{10}3}}.
	\end{align}
\end{lemma}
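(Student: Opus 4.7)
The key observation is that $E_{z_k}^{\nu,j}g$ is obtained from $E_{z_k}^{\nu}g$ by a Fourier multiplier acting in the $x$-variable alone. Indeed, for fixed $t$ the partial Fourier transform in $x$ gives
\[
\mathcal{F}_x\bigl(E_{z_k}^{\nu}g(\cdot,t)\bigr)(\xi)
= e^{ith_{z_k}(\xi)}a_{2,z_k}^{\nu}(\xi)g(\xi),
\]
so multiplying by $\rho(R\xi-j)$ with $R=\lambda^{1/2-\varepsilon/2}$ and undoing the Fourier transform recovers precisely $E_{z_k}^{\nu,j}g(\cdot,t)$. Hence one has the pointwise identity
\[
E_{z_k}^{\nu,j}g(x,t)=\rho_j(D_x)\,E_{z_k}^{\nu}g(x,t),\qquad \rho_j(\xi):=\rho(R\xi-j),
\]
and the family $\{\rho_j\}_{j\in\mathbb{Z}^2}$ is a smooth partition of unity on $\mathbb{R}^2$ adapted to the uniform grid of sidelength $R^{-1}$. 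Thus the $j$-decomposition of $E_{z_k}^{\nu}g$ is a smooth rectangular Littlewood--Paley decomposition in the $x$-variable.

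Second, I would apply the vector-valued (weighted) Littlewood--Paley inequality with respect to the $\ell^2_\nu$ index structure. Regarding the collection $\bigl(E^{\nu}_{z_k}g(\cdot,t)\bigr)_{\nu}$ as an $\ell^2_\nu$-valued function of $x$, the vector-valued Mihlin multiplier theorem (equivalently, $\ell^2$-valued Calderón--Zygmund theory for smooth tile multipliers) yields, uniformly in $t$,
\[
\Bigl\|\Bigl(\sum_{\nu,j}|\rho_j(D_x)E^{\nu}_{z_k}g(\cdot,t)|^2\Bigr)^{1/2}\Bigr\|_{L^{10/3}(w^x\,dx)}
\lesssim
\Bigl\|\Bigl(\sum_{\nu}|E^{\nu}_{z_k}g(\cdot,t)|^2\Bigr)^{1/2}\Bigr\|_{L^{10/3}(w^x\,dx)},
\]
where $w^x(x)$ is the $x$-factor of a tensor-product majorant $w_{Q_0}(x,t)\lesssim w^x(x)w^t(t)$. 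Integrating in $t$ against $w^t$ and invoking the comparability $w^x(x)w^t(t)\approx w_{Q_0}(x,t)$ (both are essentially rapidly decaying bumps on the cube $Q_0$) then produces the claimed estimate with $w_{Q_0}$ on both sides.

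The RapDec$(\lambda)\|g\|_{L^{10/3}}$ term is harmless: if $\rho\in C_c^\infty$ is strictly compactly supported, one could in fact omit it; if $\rho$ is merely Schwartz, then only finitely many $\rho_j$ interact meaningfully with the support of $a_{2,z_k}^{\nu}$ (which is itself compactly supported, uniformly in $\nu$), and the remaining tails contribute at most a rapidly decaying remainder, absorbed into this error.

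The main technical point I expect is ensuring that the implicit constants in the vector-valued weighted Littlewood--Paley inequality are uniform in $\lambda$. This reduces to checking that $w_{Q_0}$ lies in the Muckenhoupt class $A_{5/3}$ with a constant independent of $\lambda$. Since $w_{Q_0}$ is simply a rescaled dilate (and translate) of a fixed Schwartz-type $A_1$ weight, and the Muckenhoupt classes are invariant under dilation/translation, the constant is scale-invariant; hence the $\lessapprox$ in the statement, which permits an unimportant $\lambda^{\varepsilon}$ loss, is more than enough tolerance. No bilinear input, stationary phase, or decoupling appears to be needed here---this step is really a linear Littlewood--Paley/Calderón--Zygmund fact dressed in the $\ell^2_\nu$-valued framework.
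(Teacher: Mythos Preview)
Your reduction to a Fourier multiplier in the $x$-variable is correct and matches the paper's first step. The fatal gap is your claim that $w_{Q_0}$ lies in a Muckenhoupt class with scale-invariant constant. A rapidly decaying weight of the form $(1+|z|/R)^{-N}$ with $N\gg 1$ is \emph{not} in any $A_p$: testing the $A_p$ condition on a large cube $Q$ of side $L\gg R$ centered at the origin gives
\[
\Bigl(\frac{1}{|Q|}\int_Q w\Bigr)\Bigl(\frac{1}{|Q|}\int_Q w^{-\frac{1}{p-1}}\Bigr)^{p-1}\approx (L/R)^{N-n},
\]
which blows up. Dilation invariance of $A_p$ is irrelevant, because the fixed profile was never in $A_p$ to begin with. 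The paper says this explicitly: ``The crux of the problem is that the weight function $w_{Q_0}$ is not an $A_p$ weight,'' and in particular the weighted maximal inequality $\|\mathbf{M}f\|_{L^p(w_{Q_0})}\lesssim\|f\|_{L^p(w_{Q_0})}$ fails.

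The paper's workaround is a two-layer spatial localization. First restrict to $\{|x|\le\lambda^{1/2-\varepsilon/4}\}$; outside this set the weight already decays rapidly, yielding the ${\rm RapDec}(\lambda)$ term. Then, for fixed $t$, truncate $E_{z_k}^{\nu}g(\cdot,t)$ itself to $\{|x|\le\lambda^{1/2-\varepsilon/8}\}$; the convolution tail of $\hat\rho_R$ against the discarded far part is again negligible. At this point everything lives on a fixed ball, so one applies Rubio de Francia's pointwise square-function bound $(\sum_j|\hat\varphi_j*f|^2)^{1/2}\le C\,(\mathbf{M}|f|^2)^{1/2}$ followed by the \emph{unweighted} Fefferman--Stein vector-valued maximal inequality in the $\nu$ index. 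The $\lambda^{\varepsilon}$ loss in the $\lessapprox$ comes from choosing the Fefferman--Stein exponent $r$ close to $1$, not from any $A_p$ constant. Your proposal skips precisely this localization and therefore cannot close.
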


The following lemma shows that when localized in a relatively small region, $\mathscr{T}_\lambda$ is comparable to $E_{\bar{z}} $  in a suitable sense.
A slightly weaker version of this lemma appeared in the work \cite{BelHicSog18P}, which is applicable
to the decoupling norm but is not sufficient to handle square-function estimates.
It is for this reason that we need a pointwise refinement of stability lemma as below.

\begin{lemma} \label{lem:B}
	Let $0<\delta\leqslant 1/2$ and $1\leqslant K\leqslant \lambda^{1/2-\delta}$. Then for any $N$ given by \eqref{eq:191},
\begin{align}	
&\label{eq:128}
	|\mathscr{T}_{\lambda}f(\bar z+v)| \leq  |E_{\bar z}f_{\bar z}(v)|+\Bigl(\frac{3}{\pi}\Bigr)^N \sum_{l\in \mathbb{Z}^2\backslash \{0\}}|l|^{-N} |E_{\bar z}(f_{\bar z}e^{ i  \langle 4\pi l,\cdot\rangle})(v)|\\
	&\label{eq:123aa}
	|E_{\bar z}f_{\bar z}(v)|\leq |\mathscr{T}_{\lambda}f(\bar z+v)|+ \Bigl(\frac{3}{\pi}\Bigr)^N\sum_{l \in \mathbb{Z}^2\backslash \{0\}}|l|^{-N}\big|\mathscr{T}_{\lambda}\big[e^{i\langle 4\pi l,(\partial_x\phi^\lambda)(\bar z,\cdot)\rangle}f\big](\bar z+v)\big|,
	\end{align}
	whenever $|v|\leq K$.
\end{lemma}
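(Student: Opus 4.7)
The plan is to establish both \eqref{eq:128} and \eqref{eq:123aa} by Fourier-expanding the phase factor $e^{\pm i\varepsilon_\lambda^{\bar z}(v,\cdot)}$ in the $\eta$-variable, this being the sole obstruction to equality between $\mathscr{T}_\lambda f(\bar z+v)$ and $E_{\bar z}f_{\bar z}(v)$. The representation displayed just before the lemma, together with the product structure $a(z,\eta)=a_1(z)a_2(\eta)$, shows that
\[
\mathscr{T}_\lambda f(\bar z+v)=\int e^{i(\langle x,\eta\rangle+th_{\bar z}(\eta))}\,e^{i\varepsilon_\lambda^{\bar z}(v,\eta)}\,a_1((\bar z+v)/\lambda)\,a_{2,\bar z}(\eta)\,f_{\bar z}(\eta)\,\dif\eta,
\]
so that after the normalisation $\|a_1\|_{L^\infty}\leqslant 1$ (and restriction to $\bar z$ at which $a_1$ is bounded away from zero) the integrand of $E_{\bar z}f_{\bar z}(v)$ differs from that of $\mathscr{T}_\lambda f(\bar z+v)$ only by the scalar factor $e^{i\varepsilon_\lambda^{\bar z}(v,\eta)}$.

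Next I would expand $e^{i\varepsilon_\lambda^{\bar z}(v,\cdot)}$ as a Fourier series on a period-$1/2$ cube in $\R^2$ containing the $\eta$-support of $a_{2,\bar z}$:
\[
e^{i\varepsilon_\lambda^{\bar z}(v,\eta)}=\sum_{l\in\mathbb{Z}^2}c_l(v)\,e^{i\langle 4\pi l,\eta\rangle}.
\]
Repeated integration by parts in $\eta$ on the fundamental cell, combined with a Faà di Bruno expansion bounding $\partial_\eta^\beta e^{i\varepsilon_\lambda^{\bar z}(v,\cdot)}$ in terms of $\partial_\eta^{\beta'}\varepsilon_\lambda^{\bar z}$ for $|\beta'|\leqslant N$, together with the uniform estimate \eqref{eq:64}, yields
\[
|c_0(v)|\leqslant 1,\qquad |c_l(v)|\leqslant(3/\pi)^N|l|^{-N}\quad(l\neq 0),
\]
uniformly for $|v|\leqslant K$. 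Substituting the series and recognising
\[
\int e^{i(\langle x,\eta\rangle+th_{\bar z}(\eta))}e^{i\langle 4\pi l,\eta\rangle}a_{2,\bar z}(\eta)f_{\bar z}(\eta)\,\dif\eta=E_{\bar z}\bigl(f_{\bar z}\,e^{i\langle 4\pi l,\cdot\rangle}\bigr)(v),
\]
the identity $\mathscr{T}_\lambda f(\bar z+v)=\sum_l c_l(v)\,E_{\bar z}(f_{\bar z}\,e^{i\langle 4\pi l,\cdot\rangle})(v)$ combined with the triangle inequality proves \eqref{eq:128}.

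The reverse estimate \eqref{eq:123aa} follows by running the argument backwards: Fourier-expand $e^{-i\varepsilon_\lambda^{\bar z}(v,\eta)}=\sum_l d_l(v)e^{i\langle 4\pi l,\eta\rangle}$ with the same coefficient bounds, insert $1=e^{i\varepsilon_\lambda^{\bar z}}\cdot e^{-i\varepsilon_\lambda^{\bar z}}$ into $E_{\bar z}f_{\bar z}(v)$, exchange sum with integral, and then reverse the change of variables $\eta'=\Psi^\lambda(\bar z,\eta)$. By \eqref{eq:44} the modulation $e^{i\langle 4\pi l,\eta\rangle}$ in the new variables corresponds to $e^{i\langle 4\pi l,\partial_x\phi^\lambda(\bar z,\cdot)\rangle}$ acting on $f$, so each resulting integral equals $\mathscr{T}_\lambda\bigl[e^{i\langle 4\pi l,\partial_x\phi^\lambda(\bar z,\cdot)\rangle}f\bigr](\bar z+v)$, and the triangle inequality delivers \eqref{eq:123aa}.

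The main obstacle is the quantitative Fourier-coefficient estimate: one has to extract the decay $(3/\pi)^N|l|^{-N}$ uniformly in $v\in B(0,K)$ from \eqref{eq:64}, which requires the period-$1/2$ extension of $e^{i\varepsilon_\lambda^{\bar z}(v,\cdot)}$ to be smooth enough for $N$ integrations by parts and that the constants produced by Faà di Bruno be reconciled with the stated bound. A subordinate technicality is that the $a_1$ factor depends on $v$; this is handled by the normalisation $|a_1|\leqslant 1$ for \eqref{eq:128}, while \eqref{eq:123aa} implicitly requires $a_1$ to be bounded below on the $K$-cylinder around $\bar z$ so that the corresponding inverse factor can be absorbed into the coefficients $d_l(v)$ without disturbing their decay.
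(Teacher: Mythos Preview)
Your strategy matches the paper's: Fourier-expand the phase error $e^{\pm i\varepsilon_\lambda^{\bar z}(v,\eta)}$ on a period-$1/2$ cube containing the $\eta$-support, bound the coefficients by integration by parts using \eqref{eq:64}, and read off the two inequalities term by term. The constants also line up: the paper obtains $12^N(4\pi|l|)^{-N}=(3/\pi)^N|l|^{-N}$.

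There is, however, a genuine gap at exactly the point you flag as ``the main obstacle.'' The bare function $\eta\mapsto e^{i\varepsilon_\lambda^{\bar z}(v,\eta)}$ has modulus one everywhere, so its period-$1/2$ extension is \emph{not} smooth across the cell boundary; integration by parts on the fundamental cube would pick up boundary terms and you would not get the claimed $|l|^{-N}$ decay. The paper's fix is short: since $\mathrm{supp}_\eta\,a_{2,\bar z}\subset B(\mathbf{e}_2,\varepsilon_0)$ with $\varepsilon_0$ small, one first multiplies $f$ by a fixed cutoff $\psi$ equal to $1$ on $B(\mathbf{e}_2,1/100)$ and vanishing outside $B(\mathbf{e}_2,1/5)$, with $|\partial_\eta^\alpha\psi|\leqslant 6^N$ for $|\alpha|\leqslant N$. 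One then Fourier-expands the \emph{product} $e^{i\varepsilon_\lambda^{\bar z}(v,\eta)}\psi(\eta)$ on the cube $Q(\mathbf{e}_2,1/4)$; this function is compactly supported in the interior of the cell, so its periodic extension is genuinely $C^N$ and $N$ integrations by parts give the stated bound (the factor $6^N$ from $\psi$ combines with the rest to produce $12^N(4\pi|l|)^{-N}$). Because $\psi\equiv 1$ on the support of $a_{2,\bar z}$, inserting $\psi$ does not alter any of the integrals. The same device handles the reverse direction with $e^{-i\varepsilon_\lambda^{\bar z}}\psi$.

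Your concern about the $a_1$ factor is legitimate but minor; the paper simply suppresses it, and with the normalisation $\|a_1\|_\infty\leqslant 1$ the forward bound \eqref{eq:128} goes through, while for \eqref{eq:123aa} one is working on a cube where $a_1$ is implicitly nonvanishing (otherwise both sides are essentially zero up to ${\rm RapDec}(\lambda)$).
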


We postpone the proof of
Lemma \ref{lem:A} and Lemma \ref{lem:B}
in the next two subsections.
Let us continue the proof of \eqref{eq:5.1}.  Let $z_k$ be the center of $Q_k$, therefore by \eqref{eq:128} we have
	\begin{align}\nonumber
	&\quad \Bigl\|\Big(\sum_{\nu,j}|\mathscr{T}_{\lambda,k}^{\nu,j}f|^2\Big)^{\frac12}\Bigr\|_{L^{\frac{10}3}(w_{Q_k})}\\
	&\leq \Bigl\|\Big(\sum_{\nu,j}|\mathscr{T}_{\lambda,k}^{\nu,j}f|^2\Big)^{\frac12}\chi_{\{|z-z_k|\leq \lambda^{\frac{1}{2}-\frac{\varepsilon}{2}}\}}\Bigr\|_{L^{\frac{10}{3}}(w_{Q_k})}
+\Bigl\|\Big(\sum_{\nu,j}|\mathscr{T}_{\lambda,k}^{\nu,j}f|^2\Big)^{\frac12}\chi_{\{|z-z_k|\geq \lambda^{\frac{1}{2}-\frac{\varepsilon}{2}}\}
	}\Bigr\|_{L^{\frac{10}{3}}(w_{Q_k})}\nonumber\\
	&\lesssim_N \sum_{l\in \mathbb{Z}^2}(1+4\pi|l|)^{-N} \Big\|\Big(\sum_{\nu,j}|E_{z_k}^{\nu,j}(f_{z_k}e^{i\langle 4\pi l,\cdot \rangle})|^2\Big)^{\frac12}\|_{L^{\frac{10}{3}}(w_{Q_0})}+{\rm RapDec(\lambda)}\|f\|_{L^{\frac{10}3}}\label{eq:ppp'}
	\end{align}
	It should be  noted that the cube $Q_0$  appearing in the last inequality of \eqref{eq:128} may be slightly larger than the original one.

	Now using the estimate obtained in \eqref{eq:127}, up to a negligible error term, we have
	\begin{align*}
	&\quad \sum_{l}(1+4\pi|l|)^{-N}\Big\|\Big(\sum_{\nu,j}|E_{z_k}^{\nu,j}(f_{z_k}e^{i\langle 4\pi l,\cdot\rangle})|^2\Big)^{\frac12}\Big\|_{L^{\frac{10}3}(w_{Q_0})}\\
	&\lessapprox   \sum_{l}(1+4\pi|l|)^{-N}\Big\|\Big(\sum_{\nu}|E_{z_k}^\nu(f_{z_k}e^{i\langle 4\pi l,\cdot\rangle})|^2\Big)^{\frac12}\Big\|_{L^{\frac{10}3}(w_{Q_0})}\\
	&\lessapprox   \sum_{l}(1+4\pi|l|)^{-N}\Big\|\Big(\sum_{\nu}|E_{z_k}^{\nu}f_{z_k}|^2\Big)^{\frac12}\Big\|_{L^{\frac{10}3}(w_{Q_{0},l})},
	\end{align*}
where $w_{Q_{0},l}(z)=w_{Q_0}((4\pi l,0)+z)$.

	The last inequality  we have used  the fact the extension operator $E_{\bar z}$ is invariant under translation transformation
	\beq
	E_{\bar z}[e^{i\langle 4\pi l,\cdot\rangle}g](x,t)=E_{\bar z}g(x+4\pi l, t).
	\eeq
	Since
	\beq
	\sum_{l}(1+4\pi|l|)^{-N} w_{Q_0,l}(z)\lesssim w_{Q_0}(z),
	\eeq
 we obtain that
	\beq \label{eq:51}
	\Big\|\Big(\sum_{\nu,j}|\mathscr{T}_{\lambda,k}^{\nu,j}f|^2\Big)^{\frac12}\Big\|_{L^{\frac{10}3}(w_{Q_k})}\lessapprox  \Big\|\Big(\sum_{\nu}|E_{z_k}^\nu f_{z_k}|^2\Big)^{\frac12}\Big\|_{L^{\frac{10}3}(w_{Q_0})}+{\rm RapDec(\lambda)}\|f\|_{\frac{10}3}.
	\eeq
	To finish the proof, it suffices to replace  $E_{z_k}^\nu f_{z_k}$ by its variable coefficient counterpart.
	
	From \eqref{eq:123aa} and Minkowski's inequality  we have
	\begin{align*}
	&\;\;\; \Big\|\Big(\sum_{\nu}|E_{z_k}^\nu f_{z_k}|^2\Big)^{\frac12}\|_{L^{\frac{10}3}(w_{Q_0})}\\
	&\leq \Big\|\Big(\sum_{\nu}|E_{z_k}^\nu f_{z_k}|^2\Big)^{\frac12}\chi_{\bigl\{|z|\leq \lambda^{\frac12-\frac{\varepsilon}{16}}\bigr\}}\Big\|_{L^{\frac{10}{3}}(w_{Q_0})}
+ \Big\|\Big(\sum_{\nu}|E_{z_k}^{\nu} f_{z_k}|^2\Big)^{\frac12}\chi_{ \bigl\{|z|\ge \lambda^{\frac12-\frac{\varepsilon}{16}}\bigr\}} \Big\|_{L^{\frac{10}3}(w_{Q_0})}\\
	&\lesssim_N  \sum_{l \in \mathbb{Z}^2}(1+4\pi|l|)^{-N}\Big\|\Big(\sum_{\nu}|\mathscr{T}_{\lambda}^{\nu}(e^{i\langle 4\pi l, \partial_{x} \phi^\lambda(z_k,\cdot)\rangle}f)(z_k+v)|^2\big)^{\frac12}\Big\|_{L^{\frac{10}3}(w_{Q_0})}+{\rm RapDec(\lambda)}\|f\|_{L^{\frac{10}3}}
	\end{align*}
	Note that  when $l=0$, that is what we desire,  thus it remains to control the error term. By \eqref{eq:128},
	\begin{align*}
	&\quad \sum_{l \in \mathbb{Z}^2\backslash \{0\}}\Bigl(\frac{\pi|l|}{3}\Bigr)^{-N}\Big\|\Big(\sum_{\nu}|\mathscr{T}_{\lambda}^{\nu}(e^{i\langle 4\pi l, \partial_{x} \phi^\lambda(z_k,\cdot)\rangle}f)(z_k+v)|^2\Big)^{\frac12}\Big\|_{L^{\frac{10}3}(w_{Q_0})}\\
	&\leq \sum_{l\in \mathbb{Z}^2 \backslash \{0\}} \sum_{k\in \mathbb{Z}^2\backslash \{0\} } \Bigl(\frac{\pi|l|}{3}\Bigr)^{-N}\Bigl(\frac{\pi|k|}{3}\Bigr)^{-N}\Big\|\Big(\sum_{\nu}|E_{z_k}^\nu f_{z_k}|^2\Big)^{\frac12}\Big\|_{L^{\frac{10}3}(w_{Q_0} ((4\pi (l+k),0)+\cdot))}\\
	&\quad +\sum_{l\in \mathbb{Z}^2 \backslash \{0\}}  \Bigl(\frac{\pi|l|}{3}\Bigr)^{-N}\Big\|\Big(\sum_{\nu}|E_{z_k}^\nu f_{z_k}|^2\Big)^{\frac12}\Big\|_{L^{\frac{10}3}(w_{Q_0} ((4\pi l,0)+\cdot))}\\
	&\leq \frac{1}{2}\Big\|\Big(\sum_{\nu}|E_{z_k}^\nu f_{z_k}|^2\Big)^{\frac12}\Big\|_{L^{\frac{10}3}(w_{Q_0} )},
	\end{align*}
	the last inequality can be ensured by presetting  $N$ sufficiently large.
		Therefore we combine the above estimate together
	\begin{align*}
	&\Big\|\Big(\sum_{\nu}|E_{z_k}^\nu f_{z_k}|^2\Big)^{\frac12}\Big\|_{L^4(w_{Q_0})}\\
 \leq& \Big\|\Big(\sum_{\nu}|\mathscr{T}_{\lambda}^{\nu}f|^2\Big)^{\frac12}\Big\|_{L^4(w_{Q_k})}
 + \frac{1}{2}\Big\|\Big(\sum_{\nu}|E_{z_k}^\nu f_{z_k}|^2\Big)^{\frac12}\Big\|_{L^4(w_{Q_0} )}+{\rm RapDec(\lambda)}\|f\|_{L^{\frac{10}3}}.
	\end{align*}
	The term $\|(\sum_{\nu}|E_{z_k}^\nu f_{z_k}|^2)^{1/2}\|_{L^4(w_{Q_0} )}$ appearing in the right hand side
can be absorbed in the left hand side, then we complete the proof.

\subsection{Proof of Lemma \ref{lem:A}}
We shall need the following two lemmas.
The first one is due to Rubio de Francia \cite{Ru83}, which handles the square function estimate for equally-spaces cubes in frequency space.
\begin{lemma}\label{le8}
	Let $\{O_k\}_{k}$ be a collection of equally spaced cubes, and let $\varphi_k(\xi)=\varphi(\xi-\xi_k)$ be the bump function adapted to $O_k$, where $\xi_k$ denotes the center of $O_k$. Then for any function $f$ we have the pointwise estimate
	\beq
	\Bigl(\sum_{k}|\widehat{\varphi}_k\ast f|^2\Bigr)^{1/2}\leq C(\varphi)({\mathbf M}[|f|^2])^{1/2}
	\eeq
	where ${\mathbf{M} }$ denotes the Hardy-Littlewood maximal function, and $C(\varphi)$ depends only on the dimension  and finitely many of the derivatives of $\varphi$ which is associated with the unit cube.
\end{lemma}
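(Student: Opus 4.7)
The plan is to prove this Rubio de Francia-type pointwise square function estimate by reducing to a periodization/Parseval argument on the torus and then absorbing the resulting weighted integral into the Hardy--Littlewood maximal function via the Schwartz decay of $\widehat{\varphi}$.

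First, I would perform a dilation so that the centers $\xi_k$ form the lattice $2\pi\Z^d$ and each $O_k$ is the unit cube around $\xi_k$; by standard scaling this only affects the constant $C(\varphi)$. From $\varphi_k(\xi)=\varphi(\xi-\xi_k)$ one has the modulation identity $\widehat{\varphi}_k(z)=e^{-iz\cdot\xi_k}\widehat{\varphi}(z)$. Fixing $x\in\R^d$ and setting $G_x(y):=\widehat{\varphi}(x-y)\,f(y)$, a change of variables gives $|\widehat{\varphi}_k\ast f(x)|=|\widehat{G_x}(\xi_k)|$ for each $k$, so the task reduces to the pointwise inequality
\[
\sum_{k\in\Z^d}|\widehat{G_x}(2\pi k)|^{2}\ \lesssim\ {\mathbf M}\bigl[|f|^{2}\bigr](x).
\]

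Next I would identify the left-hand side with the squared $L^2$ norm on $\mathbb{T}^d$ of the periodization $G_x^{\#}(y):=\sum_{n\in\Z^d}G_x(y+n)$. Since the Fourier series coefficients of $G_x^{\#}$ on $\mathbb{T}^d$ are precisely $\widehat{G_x}(2\pi k)$, Parseval gives $\sum_k|\widehat{G_x}(2\pi k)|^{2}=\|G_x^{\#}\|_{L^2(\mathbb{T}^d)}^{2}$. To keep the constant uniform in $x$, I would apply Cauchy--Schwarz in $n$ with the weight recentered at the integer lattice point $n_0=n_0(x)$ nearest to $x$: using the summable weight $w_n=(1+|n-n_0|)^{-2d-1}$, one obtains
\[
\|G_x^{\#}\|_{L^2(\mathbb{T}^d)}^{2}\ \lesssim\ \int_{\R^d}(1+|z-x|)^{2d+1}|G_x(z)|^{2}\,dz
\]
after a change of variable $z=y+n$ and using that $|z-x|\approx |n-n_0|$ on the translated unit cube $[n,n+1]^d$.

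Finally, the Schwartz decay $|\widehat{\varphi}(w)|\lesssim_{N}(1+|w|)^{-N}$ for arbitrary $N$ absorbs this polynomial weight: choosing $N$ sufficiently large, the integral is dominated by $\int_{\R^d}(1+|x-z|)^{-M}|f(z)|^{2}\,dz$ for any $M>d$, and the standard dyadic-annulus decomposition around $x$ controls this by $C\,{\mathbf M}[|f|^{2}](x)$. The only genuine subtlety is ensuring uniformity of the constant in $x$, which is precisely why the Cauchy--Schwarz weight has to be recentered at $n_0(x)$; after that step everything is routine Fourier analysis with no obstacle more serious than bookkeeping the decay exponents.
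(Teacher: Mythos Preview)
The paper does not supply its own proof of this lemma: it is stated as a result due to Rubio de Francia \cite{Ru83} and simply cited. Your argument is a correct and standard proof of the pointwise estimate---the periodization/Parseval identification of $\sum_k|\widehat{G_x}(2\pi k)|^2$ with $\|G_x^{\#}\|_{L^2(\mathbb{T}^d)}^2$, followed by Cauchy--Schwarz in the lattice sum and Schwartz decay of $\widehat{\varphi}$, is exactly how one usually establishes this. The recentering of the weight at $n_0(x)$ is the right detail to make the constant uniform in $x$, and the final step reducing $\int(1+|x-z|)^{-M}|f(z)|^2\,dz$ to $\mathbf{M}[|f|^2](x)$ is routine. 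Since only $|\widehat{\varphi}(w)|\lesssim(1+|w|)^{-N}$ for a single fixed $N>\tfrac{3d+1}{2}$ is needed, the dependence of the constant on finitely many derivatives of $\varphi$ is also clear from your argument.
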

The next lemma about the vector-valued maximal function estimate is due to Fefferman- Stein \cite{FS} (see also \cite{Grafakos14A}).
\begin{lemma}\label{le6}
	Let $1<r,p<\infty$, $\{f_k\}_k$ be a sequence of functions, then
	\beq
	\Bigl\|\Big(\sum_{k} |{\mathbf{M}}f_k|^r\Big)^{1/r}\Bigr\|_{L^p(\R^n)}\leq C_n\;A_{p,r} \Bigl\|\Big(\sum_{k} |f_k|^r\Big)^{1/r}\Bigr\|_{L^p(\R^n)},
	\eeq
	where
	$
	A_{p,r}=\frac{r}{r-1}\Bigl(p+\frac{1}{p-1}\Bigr)
	$.
\end{lemma}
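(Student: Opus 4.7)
The plan is to deduce the vector-valued maximal inequality from the scalar Hardy-Littlewood theorem through a Calderón-Zygmund decomposition argument adapted to $\ell^r$-valued functions, followed by interpolation. The natural framework is to view $\mathbf{M}$ as a sublinear operator acting on $\ell^r$-valued functions $\vec f = \{f_k\}_k$ via $\mathcal{M}\vec f := \{\mathbf{M} f_k\}_k$, and to establish that $\mathcal{M}\colon L^p(\ell^r)\to L^p(\ell^r)$ is bounded with quantitative constants.

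First I would handle the diagonal case $p=r$, which is almost trivial: by Fubini and the scalar strong $(r,r)$ bound for $\mathbf{M}$,
\[
\Bigl\|\bigl(\sum_k |\mathbf{M} f_k|^r\bigr)^{1/r}\Bigr\|_{L^r}^r
=\sum_k\|\mathbf{M} f_k\|_{L^r}^r
\le C_r^{\,r}\sum_k\|f_k\|_{L^r}^r
=C_r^{\,r}\Bigl\|\bigl(\sum_k|f_k|^r\bigr)^{1/r}\Bigr\|_{L^r}^r,
\]
where the scalar constant $C_r\lesssim r/(r-1)$ controls the $\tfrac{r}{r-1}$ factor appearing in $A_{p,r}$. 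The next step is a weak-type $(r,r)$ inequality for $\mathcal{M}$: apply a Calderón-Zygmund decomposition to the scalar function $F(x)=(\sum_k|f_k(x)|^r)^{1/r}$ at level $\lambda$, producing disjoint dyadic cubes $\{Q_j\}$ on which the average of $F^r$ is comparable to $\lambda^r$. Split $f_k=g_k+b_k$ where $g_k$ agrees with $f_k$ off $\bigcup_j Q_j$ and equals the average on each $Q_j$, and $b_k=\sum_j(f_k-\langle f_k\rangle_{Q_j})\chi_{Q_j}$. Then $\|g_k\|_{L^\infty}\lesssim\lambda$ pointwise in $\ell^r$, so the good part is controlled using the strong $(r,r)$ bound established above; for the bad part, on the complement of the enlarged cubes $\bigcup_j Q_j^{*}$ the standard pointwise estimate $\mathbf{M} b_k(x)\lesssim\sum_j \ell(Q_j)^n\|b_{k,j}\|_{L^1}/\mathrm{dist}(x,Q_j)^n$, summed in $\ell^r$ via Minkowski, gives the required weak bound.

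Once the weak $(r,r)$ bound is in place, together with the trivial strong $(\infty,\infty)$ bound $\sup_k|\mathbf{M} f_k|\le\mathbf{M}(\sup_k|f_k|)$, the strong $(p,p)$ inequality for $r<p<\infty$ follows from Marcinkiewicz interpolation, and the sharp constant in Marcinkiewicz interpolation is what produces the $(p+\tfrac{1}{p-1})$ factor. For the range $1<p<r$, I would use duality: identify $(L^p(\ell^r))^{*}=L^{p'}(\ell^{r'})$ and apply the previously established estimate at exponents $(p', r')$ satisfying $r'<p'<\infty$, after linearizing $\mathbf{M}$ via a measurable selection of the supremum (replacing $\mathbf{M} f$ by $\frac{1}{|B(x,r(x))|}\int_{B(x,r(x))}|f|$ for a measurable radius function $r(x)$, which is bounded by $\mathbf{M} f$ and whose adjoint is again an averaging operator bounded on $L^{p'}(\ell^{r'})$).

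The main obstacle is the weak-type $(r,r)$ bound for $\mathcal{M}$: the CZ bad-part analysis requires handling the $\ell^r$-norm of a sum of singular contributions with care so that the constants do not blow up, and tracking the constant through Marcinkiewicz interpolation to match $\frac{r}{r-1}(p+\frac{1}{p-1})$ is a bookkeeping task that must be done carefully. Everything else is either Fubini, the scalar Hardy-Littlewood theorem, or standard interpolation and duality.
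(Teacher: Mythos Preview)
The paper does not prove this lemma at all: it is simply quoted from Fefferman--Stein \cite{FS} (with a pointer to Grafakos \cite{Grafakos14A}) and used as a black box. So there is no ``paper's proof'' to compare against, and any correct argument would be acceptable.

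That said, your outline has a genuine gap in the range $p>r$. You propose to interpolate between the strong $(r,r)$ bound and a ``trivial strong $(\infty,\infty)$ bound $\sup_k|\mathbf{M}f_k|\le \mathbf{M}(\sup_k|f_k|)$''. But that inequality is the $L^\infty(\ell^\infty)\to L^\infty(\ell^\infty)$ bound, not the $L^\infty(\ell^r)\to L^\infty(\ell^r)$ bound you would need to interpolate at fixed $r$. In fact the $L^\infty(\ell^r)$ endpoint is \emph{false}: take $n=1$ and $f_k=\chi_{[2^k,2^{k+1}]}$ for $1\le k\le N$; the intervals are disjoint so $\bigl(\sum_k|f_k|^r\bigr)^{1/r}\le 1$ everywhere, yet at $x=0$ one has $\mathbf{M}f_k(0)\gtrsim 1$ for every $k$, whence $\bigl(\sum_k|\mathbf{M}f_k(0)|^r\bigr)^{1/r}\gtrsim N^{1/r}\to\infty$. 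So Marcinkiewicz interpolation between $(r,r)$ and $(\infty,\infty)$ cannot produce $L^p(\ell^r)$ for $p>r$, and since your treatment of $1<p<r$ is by duality to the range $p'>r'$, that case collapses as well. (Separately, calling the Calder\'on--Zygmund step a ``weak-type $(r,r)$'' bound is a slip: you already have the strong $(r,r)$ bound from Fubini; what the CZ argument actually yields is the vector-valued weak $(1,1)$ bound.)

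The standard fix is to replace the bogus $(\infty,\infty)$ endpoint by the Fefferman--Stein weighted inequality
\[
\int_{\R^n}(\mathbf{M}f)^r\,w\;\le\;C_{n,r}\int_{\R^n}|f|^r\,\mathbf{M}w,
\]
valid for all weights $w\ge0$. For $p>r$ one then dualises $L^{p/r}$: choose $g\ge0$ with $\|g\|_{(p/r)'}=1$ realising $\bigl\|\sum_k|\mathbf{M}f_k|^r\bigr\|_{p/r}$, apply the weighted inequality with $w=g$ to each $k$, sum, and use H\"older together with the boundedness of $\mathbf{M}$ on $L^{(p/r)'}$. The range $1<p\le r$ then follows from the vector-valued weak $(1,1)$ bound (your CZ step, correctly labelled) and interpolation with the diagonal $(r,r)$ case. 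This is the argument in the references the paper cites.
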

The crux of the problem is that the weight function $w_{Q_0}$ is not an $A_p$ weight. Specifically, the following  estimate for the  maximal operator
\beq
\|\mathbf{ M} f\|_{L^p(w_{Q_0})}\leq C\|f\|_{L^p(w_{Q_0})}, \;1<p<\infty,
\eeq
fails. In order to overcome this failure and preserve some kind of localized property, we need to make a series of localization reduction.

Indeed
\begin{align*}
&\Big\|\Big(\sum_{\nu,j}|E_{z_k}^{\nu,j}g|^2\Big)^{\frac{1}{2}}\Big\|_{L^{\frac{10}3}(w_{Q_0})}\\
\leq& \Big\|\Big(\sum_{\nu,j}|E_{z_k}^{\nu,j}g|^2\Big)^{\frac{1}{2}}\chi_{\bigl\{|x|\leq \lambda^{1/2-\varepsilon/4}\bigr\}}\Big\|_{L^{\frac{10}3}(w_{Q_0})}
+\Big\|\Big(\sum_{\nu,j}|E_{z_k}^{\nu,j}g|^2\Big)^{1/2}\chi_{\bigl\{|x|> \lambda^{1/2-\varepsilon/4}\bigr\}}\Big\|_{L^{10/3}(w_{B_0})}.
\end{align*}
Due to the fast decay of the weight $w_{Q_0}$ away from $|z|\geq \lambda^{1/2-\varepsilon/4}$, it suffices to consider
\beq \label{eq:124}
\Big\|\Big(\sum_{\nu,j}|E_{z_k}^{\nu,j}g|^2\Big)^{\frac12}\chi_{\bigl\{|x|\leq \lambda^{1/2-\varepsilon/4}\bigr\}}\Big\|_{L^{10/3}(w_{Q_0})}.
\eeq
Freeze $t_0$ and note that
\beq \label{eqc}
E_{z_k}^{\nu,j}g(x, t_0)=\int  e^{i\langle x,\eta\rangle }\rho(\lambda^{\frac12-\frac{\varepsilon}2}\eta-j)(E_{z_k}^{\nu} g)^{\wedge}(\eta, t_0) \dif \eta
\eeq
where $E_{z_k}^{\nu} g(x,t_0)$ is defined by
\beq
E_{z_k}^{\nu} g(x,t_0):=\int_{\R^2}e^{i(\langle x,\eta\rangle+t_0h_{z_k}(\eta))}a_{z_k}^\nu(\eta) g(\eta)d\eta.
\eeq
We further decompose
\begin{align}\label{eq:123}
	E_{z_k}^{\nu}g(x,t_0)=\chi_{\bigl\{|x|\leq \lambda^{\frac12-\frac{\varepsilon}{8}}\bigr\}}(x)E_{z_k}^{\nu}g(x,  t_0)+\chi_{\bigl\{|x|> \lambda^{\frac12-\frac{\varepsilon}{8}}\bigr\}}(x)E_{z_k}^{\nu}g(x,t_0).
\end{align}
It remains to estimate
\beq \label{eqd}
\int  e^{i\langle x,\eta\rangle }\rho(\lambda^{\frac12-\frac{\varepsilon}{2}}\eta-j)\Big(\chi_{\bigl\{|x|\leq \lambda^{\frac12-\frac{\varepsilon}{8}}\bigr\}}(\,\cdot\,)E_{z_k}^{\nu}g\Big)^{\wedge}(\eta,t_0) \dif \eta.
\eeq
In fact  for $|x|\leq \lambda^{1/2-\varepsilon/4} $, the contribution of the second term in \eqref{eq:123} to \eqref{eqc} can be controlled by
\beq
\int \frac{1}{\lambda^{1-\varepsilon}}\hat{\rho}\Bigl(\frac{x-y}{\lambda^{1/2-\varepsilon/2}}\Bigr)\chi_{\bigl\{|x|> \lambda^{\frac 12-\frac{\varepsilon}{8}}\bigr\}}(y)E_{z_k}^{\nu}g(y) dy \lesssim {\rm RapDec(\lambda)}\|g\|_{L^{10/3}}.
\eeq
We continue the estimate of  \eqref{eqd}, by Lemma \ref{le8}
\begin{align}
\nonumber
&\Big(\sum_{\nu,j}\Big|\int  e^{i\langle x,\eta\rangle }\rho(\lambda^{\frac12-\frac{\varepsilon}2}\eta-j)\Big(\chi_{\bigl\{|\,\cdot\,|\leq \lambda^{\frac12-\frac{\varepsilon}{8}}\bigr\}}(\,\cdot\,)E_{z_k}^{\nu}g\Big)^{\wedge}(\eta,t_0) \dif \eta\Big|^2\Big)^{\frac12}\\
\leq &C \Big(\sum_{\nu}{\rm  M}\Big[ |\chi_{\bigl\{|\,\cdot\,|\leq \lambda^{\frac12-\frac{\varepsilon}{8}}\bigr\}}(\,\cdot\,)E_{z_k}^{\nu}g(\cdot, t_0) |^2\Big]\Big)^{\frac12}.\label{eq:125}
\end{align}
Now unfreezing $t_0$, after plugging the estimate \eqref{eq:125} into \eqref{eq:124} and using Lemma \ref{le6}, we obtain
\begin{align}
\Big\|\Big(\sum_{\nu,j}|E_{z_k}^{\nu,j}g|^2\Big)^{\frac12}\Big\|_{L^{10/3}(w_{Q_0})}
&\lesssim \Big\| \Big(\sum_{\nu} {\rm M} \Big[|\chi_{\bigl\{|x|\leq \lambda^{\frac12-\frac{\varepsilon}{8}}\bigr\}}(\cdot)E_{z_k}^{\nu}g(\cdot, t_0)|^2\Big]\Big)^{\frac12}\|_{L^{\frac{10}3}(w_{Q_0})}\nonumber\\
&\lessapprox \Big\|\Big(\sum_{\nu}|E_{z_k}^\nu g|^2\Big)^{\frac12}\|_{L^{\frac{10}3}(w_{Q_0})}
\label{eq:126}
\end{align}
\begin{remark}
	In the last inequality, additional $\lambda^\varepsilon$ appears in the course of using Fefferman-Stein's square function estimate.  We actually use the following inequality which can be deduced directly from Lemma \ref{le6}
	$$	\Big\|\sum_m  {\rm M} g_m\Big\|_{L^{10/3} (\R^3)}\lesssim_\varepsilon N^{\varepsilon}\Big\|\sum_m |g_m|\Big\|_{L^{10/3}(\R^3)}
	$$
	where $\#\{m\}=N$ and arbitrary $\varepsilon>0$.
	
	In fact,  by H\"older's inequality and Lemma \ref{le6}, we have
	\begin{align*}
	\Big\|\sum_m  {\rm M} g_m\Big\|_{L^{\frac{10}3} (\R^3)}&\leq \Big\|\Big(\sum_m  |{\rm M} g_m|^r\Big)^{1/r}\Big\|_{L^{\frac{10}3}(\R^3)} (\#\{m\})^{1/r'}\\
	&\lesssim_{r}  \Big\|\Big(\sum_m  |g_m|^r\Big)^{1/r}\Big\|_{L^{\frac{10}3}(\R^3)} \big(\#\{m\}\big)^{\frac1{r'}}\\
	&\lesssim_{r} \Big\|\sum_m |g_m|\Big\|_{L^{\frac{10}3}(\R^3)} \big(\#\{m\}\big)^{\frac1{r'}}
	\end{align*}
	We may choose $r'$ sufficiently large such that $\frac{1}{r'}\leq \varepsilon$.
\end{remark}

\subsection{Proof of Lemma \ref{lem:B}}
%
\begin{proof}
	Noting that  ${\rm supp}_\eta a_{\lambda,\bar{z}}(z,\cdot)\subset B(\mathbf{e}_2,\varepsilon_0)$ where $\varepsilon_0$ can be chosen small if necessary,
	we may replace $f$ with $f \psi$, where $\psi$ is a smooth function that equals to $1$ on $B({\bf e_2}, \frac{1}{100})$ and vanishing outside of $B\bigl({\bf e_2}, \frac{1}{5}\bigr)$ such that
	\beq
	|\partial_{\eta}^\alpha \psi(\eta)|\leqslant 6^N, \; \text{for} \; \alpha \in \mathbb{N}^2, 1\leqslant |\alpha|\leqslant N.
	\eeq
	By performing a Fourier expansion  of $e^{i\varepsilon^{\bar z}_{\lambda}(v,\eta)}\psi(\eta)$ in $\eta$ variable, one may write
	\beq
	e^{i\varepsilon^{\bar z}_{\lambda}(v,\eta)}\psi(\eta)=\sum_{l\in \mathbb{Z}^2}b_l(v)e^{i\langle 4\pi l,\eta\rangle},
	\eeq
	where
	\beq
	b_{l}(v)=2\int_{Q({\bf e}_2, 1/4)}e^{-i\langle 4\pi l,\eta\rangle}e^{i\varepsilon^{\bar z}_\lambda(v,\eta)}\psi(\eta)\dif \eta
	\eeq
	where $Q({\bf e}_2, 1/4)$ denotes the cube centered at ${\bf e}_2$ with sidelength $1/2$.
	
	By \eqref{eq:64}, it is easy to show $|b_0(v)|\leqslant 1$. Integration by parts show that
	\beq
	|b_l(v)|\leqslant  12^N (4\pi|l|)^{-N} \quad \text{whenever}\quad |v| \leqslant  K, l\neq (0,0),
	\eeq
	which leads to \eqref{eq:128}.

	For the reverse  direction,  one may write
	\beq
	E_{\bar z}f_{\bar z}(v)=\int_{\R^2}e^{i\phi^\lambda(\bar z+v,\Psi^{\lambda}(\bar z,\eta))} e^{-i\varepsilon^{\bar z}_\lambda(v,\eta)}a_{2,\bar z}(\eta)f\circ \Psi^\lambda(\bar z,\eta)\dif \eta.
	\eeq
	Performing the Fourier expansion of $e^{-i\varepsilon^{\bar z}_\lambda(v,\eta)} $ in $\eta$ and reversing the change of variables $\eta\rightarrow \Psi^\lambda(\bar z,\eta)$, we have
	\eqref{eq:123aa}.
\end{proof}

\section{Comments on higher dimensional cases} \label{sect5}
The results in Proposition \ref{pro1} can be generalized to higher dimensions with an additional \emph{convexity} assumption on the phase function which becomes superfluous in $2+1$ dimensions since  there  exists  only  one non-vanishing eigenvalue. This assumption essentially makes the separation condition \eqref{eq:11} justifiable.

As the situation described in $\R^{2+1}$,  let $n\geq 3$, $a(z,  \eta)\in C_c^\infty(\R^n \times \R^{n+1})$ with compact support contained in $B(0,\varepsilon_0)\times B({\bf e_{n}},\varepsilon_0)$. Assume $\mathcal{C}({\bf\mathbf {e}_n},\varepsilon_0)
:=B({\bf \mathbf{e}_n},\varepsilon_0)\cap\mathbb{S}^{n-1}$ and
make angular decomposition with respect to the $\eta$-variable by cutting
$\mathcal{C}({\bf \mathbf{e}_n},\varepsilon_0)$ into
$N_\lambda\approx\lambda^{(n-1)/2}$ many caps
$\{\theta_{\nu}: 1 \leqslant \nu \leqslant N_ \lambda\}$,
each cap  $\theta_{\nu}$  extends $ \approx_{\varepsilon_0}\lambda^{-(n-1)/2}$.
We denote by $\kappa_\nu\in \mathbb{S}^{n-1}$  the center of $\theta_{\nu}$.

Let $\{\chi_{\nu}(\eta)\}$ be a family of smooth  cutoff function associated with the decomposition in the angular direction,
each of which is  homogeneous of degree $0$,
such that $\{\chi_\nu\}_{\nu}$ forms a partition  of unity on the unit circle
and then extended homogeneously to $\R^n\setminus 0$ such that
\begin{equation*}\left\{\begin{aligned}
&\sum_{0\leqslant \nu\leqslant N_\lambda} \chi_{\nu}(\eta)\equiv 1,\;\;\forall \eta \in \mathbb{R}^n\setminus 0,\\
&|\partial^{\alpha} \chi_{\nu}(\eta)|\leqslant C_\alpha \lambda^{\frac{|\alpha|}{2}},\;\; \forall \;\alpha \; \text{if}\; |\eta|=1.\end{aligned}\right.\end{equation*}
Define
\begin{equation}\label{add-100}\left\{\begin{aligned}
&T_\lambda f=\int e^{i\lambda \phi(z,\eta)}a(z,\eta) f(\eta)\,\dif\eta=\sum_\nu T_\lambda^\nu f,\\
&T_\lambda^\nu f(z)=\int e^{i\lambda\phi(z,\eta)}
a^{\nu}(z,\eta)f(\eta)\,\dif \eta,
\end{aligned}\right.\end{equation}
where $a^{ \nu}(z,\eta)=\chi_\nu(\eta)a(z,\eta)$.

Let $n\geqslant 3$, by  carrying  over the approach in the proof of Proposition \ref{Pro3}, one may obtain, under the similar condition of that in \ref{Pro3} and the convexity condition
\begin{align}
&\quad\bigl\|\sum_{\nu \in \Omega} T_{\lambda}^{\nu}g\sum_{\nu'\in \Omega'} T_{\lambda}^{\nu'}h\bigr\|_{L^{\frac{n+3}{n+1}}(\R^{n+1})}\nonumber\\
&\lessapprox_{\phi,\varepsilon} \lambda^{\frac{n-1}{2(n+3)}}\Bigl\|\Big(\sum_{\nu\in \Omega}|T_{\lambda}^{\nu}g|^2\Big)^{1/2}\Bigr\|_{L^{\frac{2(n+3)}{n+1}}(\R^{n+1})}\Bigl\|\Big(\sum_{\nu'\in \Omega'}|T_{\lambda}^{\nu' }h|^2\Big)^{\frac12}\Bigr\|_{L^{\frac{2(n+3)}{n+1}} (\R^{n+1})},
\label{eq:w6d}
\end{align}
which implies the square function estimate
\beq
\|T_{\lambda}f\|_{	L^{\frac{2(n+3)}{n+1}}(\R^{n+1})}
\lessapprox \lambda^{\frac{n-1}{4(n+3)}}
\Bigl\|\Bigl(\sum_{\nu}
|T_{\lambda}^{\nu} f|^2\Bigr)^{\frac12}\Bigr\|_{L^\frac{2(n+3)}{n+1}(\R^{n+1})}
\eeq
up to a ${\rm RapDec}(\lambda)$ term.

Unfortunately, we are unable to obtain a better result than interpolation between the sharp $L^{q_n}$ estimates of \cite{BelHicSog18P} with the $L^2$ estimate. One of the reason responsible for this shortage is due to the poor knowledge on $L^p\to L^p$ estimates for the variable coefficient version of Kakeya type maximal function in the light ray directions
\begin{equation}
\label{eq:kky-p}
\|\mathcal{M}_\delta\|_{L^p\to L^p}\leqslant C 	\max\Bigl\{\Bigl(\log\frac{1}{\delta}\Bigr)^{\frac{1}{2}},\,\delta^{-\frac{n-2}{p}}\Bigr\},\;\text{for } 2\leqslant p\leqslant\infty,
\end{equation}
when $n\geqslant 3$.
The $L^p-$Kakeya maximal function estimate for $p>2$  is known also for its profundity and difficulty in the literature, for which we refer to \cite{SoggeBook}.
In conclusion, it seems very difficult to adopt the bilinear method used in this paper to refine the result in \cite{MSS-jams}
for $p\leqslant q_n$ when $n\geqslant 3$.

\subsection*{Acknowledgements} The authors were supported by NSFC Grants 11831004.    We are grateful to Christopher Sogge
 for his invaluable comments and suggestions which helped improve the paper greatly.


\bibliographystyle{amsplain}

\end{document}